\def\today{\ifcase \month \or
	January \or February \or March \or April \or
	May \or June \or July \or August \or
	September \or October \or November \or December \fi
	\space\number\day , \number\year}
\newtheorem{theorem}{Theorem}
\newtheorem{corollary}[theorem]{Corollary}
\newtheorem{definition}[theorem]{Definition}
\newtheorem{example}[theorem]{Example}
\newtheorem{lemma}[theorem]{Lemma}
\newtheorem{proposition}[theorem]{Proposition}
\newtheorem{remark}[theorem]{Remark}
\numberwithin{theorem}{section}
\numberwithin{equation}{section}
\newcommand{\Ad}{{\rm Ad}}
\newcommand{\card}{{\rm card}}
\newcommand{\clgth}{\text{\rm c-length}\,}
\newcommand{\ee}{{\rm e}}
\newcommand{\ind}{{\rm ind}\,}
\newcommand{\Ker}{{\rm Ker}\,}
\newcommand{\lgth}{{\rm length}\,}
\newcommand{\Prim}{{\rm Prim}}
\newcommand{\RRa}{{\rm RR}}
\newcommand{\sa}{{\rm sa}}
\newcommand{\spa}{{\rm span}\,}
\newcommand{\tsr}{{\rm tsr}}
\newcommand{\Tr}{{\rm Tr}\,}
\newcommand{\CC}{{\mathbb C}}
\newcommand{\RR}{{\mathbb R}}
\newcommand{\TT}{{\mathbb T}}
\newcommand{\Ac}{{\mathcal A}}
\newcommand{\Bc}{{\mathcal B}}
\newcommand{\Cc}{{\mathcal C}}
\newcommand{\Ec}{{\mathcal E}}
\newcommand{\Fc}{{\mathcal F}}
\newcommand{\Gc}{{\mathcal G}}
\newcommand{\Hc}{{\mathcal H}}
\newcommand{\Ic}{{\mathcal I}}
\newcommand{\Jc}{{\mathcal J}}
\newcommand{\Lc}{{\mathcal L}}
\newcommand{\Oc}{{\mathcal O}}
\newcommand{\Sc}{{\mathcal S}}
\newcommand{\Qc}{{\mathcal Q}}
\newcommand{\Vc}{{\mathcal V}}
\newcommand{\Xc}{{\mathcal X}}
\newcommand{\ag}{{\mathfrak a}}
\renewcommand{\gg}{{\mathfrak g}}
\newcommand{\hg}{{\mathfrak h}}
\newcommand{\mg}{{\mathfrak m}}
\renewcommand{\ng}{{\mathfrak n}}
\newcommand{\sg}{{\mathfrak s}}
\newcommand{\zg}{{\mathfrak z}}
\newcommand{\NN}{\mathbb N}
\begin{document}

\title{Topological aspects of group $C^*$-algebras}

\author{Ingrid Belti\c t\u a and Daniel Belti\c t\u a}
\address{Institute of Mathematics ``Simion Stoilow'' 
	of the Romanian Academy, 
	P.O. Box 1-764, Bucharest, Romania}
\email{ingrid.beltita@gmail.com, Ingrid.Beltita@imar.ro}
\email{beltita@gmail.com, Daniel.Beltita@imar.ro}
\thanks{This work was supported by a grant of the Romanian National Authority for Scientific Research and
	Innovation, CNCS--UEFISCDI, project number PN-II-RU-TE-2014-4-0370}

\begin{abstract}
We discuss basic topological properties of unitary dual spaces of nilpotent Lie groups, 
using some ideas from operator algebras and their noncommutative dimension theory. 
The general results are illustrated by many examples. 
\\
\textit{2010 MSC:} Primary 22D25; Secondary 22E27
\\
\textit{Keywords:} nilpotent Lie group, primitive ideal, noncommutative dimension theory.
\end{abstract}
\maketitle


\section{Introduction}

Representation theory of a locally compact group can be developed to a large extent 
using the $C^*$-algebra of that group. 
Typically, the $C^*$-algebras that arise in this way have a rich supply of primitive ideals. 
This explains why an important role in this approach to group representation theory is played 
by the topological aspects of $C^*$-algebras, by which we mean  
properties of the Jacobson topology on the space of primitive ideals of a $C^*$-algebra.   
If $G$ is a locally compact group of type~I, 
the space of primitive ideals of its group $C^*$-algebra $C^*(G)$ is canonically homeomorphic to the unitary dual~$\widehat{G}$, so the topological aspects of $C^*(G)$ amount to the study of topological properties of~$\widehat{G}$.

A central place in this area of $C^*$-algebra theory is held by 
the inverse problems for primitive ideal spaces--- for a given class of $C^*$-algebras, 
one tries to single out  the topological spaces that are homeomorphic to primitive ideal spaces of $C^*$-algebras from that class. 
An important topic is the study of
Lie group $C^*$-algebras, that is, the problem of determining which $C^*$-algebras can arise 
as $C^*$-algebras of various classes of Lie groups, in which special topological properties 
of unitary dual spaces of the groups under consideration hold a key role.

For noncommutative noncompact groups, their unitary dual spaces in general fail to have the Hausdorff property, 
and even the singleton subsets of the dual may not be closed subsets. 
Nevertheless, for instance in the case of nilpotent Lie groups, the unitary dual space 
is always a connected topological space having the property $T_1$, that is, its singleton subsets are closed. 
This $T_1$ property comes from the fact that for any nilpotent Lie group its $C^*$-algebra is liminal, that is,  
the image of any non-trivial irreducible $*$-representation is equal to the set of all compact operators on the representation space, 
and then every primitive ideal of the group $C^*$-algebra is a  maximal ideal. 
Using results on the structure of the space of coadjoint orbits, further specific topological features of the unitary duals of nilpotent Lie groups can be established, as we will show below. 

Before proceeding to a more detailed description of the present paper, 
it is worth mentioning that, while we present only properties of nilpotent Lie groups, the methods surveyed here are applicable to wider classes of Lie groups and have lead to interesting results in their representation theory. 
See for instance \cite{BB16a}, \cite{BB17}, and \cite{BB17b}. 

The contents of this paper are as follows. 
In Section~\ref{section2} we first recall some basic topological properties of spaces of primitive ideals of $C^*$-algebras. 
This motivates the aforementioned inverse problem for primitive ideal spaces, and in this connection we discuss the solution to that problem in the special case of nuclear $C^*$-algebras and we draw some direct consequences of the corresponding result. 
The role of this section is to provide the general framework for the study of topological aspects of group $C^*$-algebras in the next sections. 

In Section~\ref{section3} we discuss the notion of special $\RR$-space, which was introduced in \cite{BBL17} as a convenient tool in order to describe the unitary duals of nilpotent Lie groups. 
Loosely speaking, a special $\RR$-space is a topological space endowed with a continuous action of the multiplicative semigroup $\RR$. 
The space of coadjoint orbits of any Lie group $G$ has the  canonical structure of a special $\RR$-space, coming from the vector space structure of $\gg^*$. 
Therefore, if $G$ is nilpotent, connected and simply connected, then Kirillov's correspondence leads to the  canonical structure of a special $\RR$-space on the unitary dual~$\widehat{G}$. 
In this section we also discuss the notion of solvable topological space, again motivated by representation theory.  
More precisely, one of the main results of \cite{BBL17} is that the $C^*$-algebra of every connected simply connected nilpotent Lie group is a solvable $C^*$-algebra, and the spectrum of such a $C^*$-algebra is a solvable topological space. 
The background of that result is actually developed in 
Section~\ref{section-nilpotent} in some detail. 

In Section~\ref{section5} we provide an exact description of the quasi-compact subsets of the unitary dual of a Heisenberg group. 
This description is a version of the well-known fact that the compact subsets of a finite-dimensional vector space are the closed bounded subsets. 
Loosely speaking, we establish a result of this type in which the vector spaces are replaced by some topological spaces for which the Hausdorff property fails to hold, and this naturally leads to new phenomena. 
For instance, we find that the intersection of two quasi-compact sets may not be quasi-compact. 

Section~\ref{section6} includes some aspects of noncommutative dimension theory for the $C^*$-algebras of nilpotent Lie groups. 
Here we recall our earlier result from \cite{BB16b} to the effect that the dimension of the space of characters of an exponential Lie group is equal to the real rank of the $C^*$-algebra of that group. 
We then establish lower and upper estimates of the nuclear dimension of the $C^*$-algebra of a nilpotent Lie group. 

Finally, in Section~\ref{section7} we present further specific examples of nilpotent Lie groups  and 
in Appendix~\ref{AppA} we collect some basic topological terminology that we use in the main body of this paper.

\subsection*{Notation} 
Throughout this paper we denote the Lie groups by upper case Roman letters and their corresponding Lie algebras by the corresponding lower case Gothic letters. 
We denote by $\RR$ and $\CC$ the fields of real and complex numbers, respectively. 
We also denote $\RR^\times:=\RR\setminus\{0\}$ and $\TT:=\{z\in\CC\mid \vert z\vert=1\}$, and both these sets are usually regarded as 1-dimensional Lie groups with respect to the group operation given by multiplication. 
For notions and results on $C^*$-algebras we refer to \cite{Dix64} and \cite{BO08}. 

\section{On the inverse problem for primitive ideal spaces of $C^*$-algebras}\label{section2}
Let $\Ac$ be a $C^*$-algebra. 
We denote by $\widehat{\Ac}$ the set of unitary equivalence classes $[\pi]$ of non-zero irreducible $*$-repres\-en\-tations~$\pi$ of $\Ac$, 
and by~$\Prim(\Ac)$ its space of primitive ideals. 
We endow $\Prim(\Ac)$ with the Jacobson topology, for which a set $T\subseteq\Prim(\Ac)$ is closed if and only if $T=\{\Jc\in\Prim(\Ac)\mid I(T)\subseteq\Jc\}$, 
where $I(T):=\bigcap\limits_{\Ic\in T}\Ic$. 
By definition, there is the canonical surjective map 
\begin{equation}
\label{basic_map}
\widehat{\Ac}\to\Prim(\Ac),\quad [\pi]\mapsto\Ker\pi
\end{equation}
and we endow $\widehat{\Ac}$ with the weakest topology for which the above map is continuous, and then that map is open. 
Here is a list of classical properties of these objects: 

\begin{proposition}\label{basic}
If $\Ac$ is a $C^*$-algebra, then the following assertions hold: 
\begin{enumerate}[(i)]
	\item\label{basic_T0} 
	$\Prim(\Ac)$ has the property $T_0$. 
	\item\label{basic_typeI} 
	$\widehat{\Ac}$ has the property $T_0$ if and only if the map \eqref{basic_map} is injective, and this is the case if and only if $\Ac$ is a $C^*$-algebra of type~I. 
	\item\label{aH} 
	If $\Ac$ is a $C^*$-algebra of type~I, then $\widehat{\Ac}$ is almost Hausdorff. 
	\item\label{basic_Baire} 
	$\widehat{\Ac}$ is locally quasi-compact and is a Baire space.
	\item\label{basic_2nd-countable} 
	If $\Ac$ is separable, then $\widehat{\Ac}$ is second countable and spectral.
	\item\label{basic_liminal} 
	If $\Ac$ is liminal, then $\Prim(\Ac)$ has the property~$T_1$, and the converse of this assertion holds if $\Ac$ is separable and of type I.
\end{enumerate}
\end{proposition}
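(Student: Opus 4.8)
The plan is to convert the $T_1$ property of $\Prim(\Ac)$ into a purely ideal-theoretic statement and then to compare the size of $\pi(\Ac)$ with the compact operators $\Kc(\Hc_\pi)$ for each irreducible representation $\pi$. The basic reformulation I would record first is this: for a primitive ideal $\Ic_0=\Ker\pi$ one has $I(\{\Ic_0\})=\Ic_0$, so by the description of the Jacobson topology the closure of $\{\Ic_0\}$ equals $\{\Jc\in\Prim(\Ac)\mid\Ic_0\subseteq\Jc\}$. Thus $\{\Ic_0\}$ is closed if and only if $\Ic_0$ is maximal among primitive ideals, and hence $\Prim(\Ac)$ is $T_1$ if and only if no primitive ideal is properly contained in another primitive ideal.

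For the forward implication, suppose $\Ac$ is liminal and let $\pi$ be an irreducible $*$-representation with $\Ic_0=\Ker\pi$. Liminality gives $\pi(\Ac)=\Kc(\Hc_\pi)$, so the canonical isomorphism $\Ac/\Ic_0\cong\Kc(\Hc_\pi)$ realizes $\Ac/\Ic_0$ as a simple $C^*$-algebra, since $\Kc(\Hc_\pi)$ has no nontrivial closed two-sided ideals. Therefore $\Ic_0$ is a maximal proper closed two-sided ideal, a fortiori maximal among primitive ideals, and the reformulation above shows $\{\Ic_0\}$ is closed. As every element of $\Prim(\Ac)$ is such a kernel, $\Prim(\Ac)$ is $T_1$.

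For the converse, assume $\Ac$ is separable and of type~I and that $\Prim(\Ac)$ is $T_1$; I would prove $\pi(\Ac)\subseteq\Kc(\Hc_\pi)$ for every irreducible $\pi$, which is precisely liminality. The type~I hypothesis, in the separable setting, supplies the reverse containment $\Kc(\Hc_\pi)\subseteq\pi(\Ac)$ for every irreducible $\pi$. Arguing by contradiction, suppose $\Kc(\Hc_\pi)\subsetneq\pi(\Ac)$ for some $\pi$, and put $\Jc:=\pi^{-1}(\Kc(\Hc_\pi))$, a closed two-sided ideal of $\Ac$. Then $\pi(\Jc)=\Kc(\Hc_\pi)\cap\pi(\Ac)=\Kc(\Hc_\pi)\neq\{0\}$ whereas $\pi(\Ic_0)=\{0\}$, so $\Ic_0\subsetneq\Jc$; moreover $\Jc\neq\Ac$ because $\pi(\Ac)$ strictly contains $\Kc(\Hc_\pi)$. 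Being a proper closed two-sided ideal, $\Jc$ has nonzero quotient $\Ac/\Jc$, which admits a nonzero irreducible representation; composing it with the quotient map produces a primitive ideal $\Pc$ with $\Pc\supseteq\Jc\supsetneq\Ic_0$. This contradicts the maximality of $\Ic_0$ among primitive ideals guaranteed by the $T_1$ property, and completes the converse.

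The hull-kernel bookkeeping and the ideal manipulations are routine; the step I expect to be the genuine obstacle is the input that a separable type~I algebra satisfies $\Kc(\Hc_\pi)\subseteq\pi(\Ac)$ for every irreducible $\pi$. This is exactly the place where separability and type~I are invoked, and it rests on the Glimm-type structure theory for type~I $C^*$-algebras (equivalently, the characterization of postliminal algebras). Once that containment is in hand, the remainder of the converse is the short ideal-theoretic contradiction above.
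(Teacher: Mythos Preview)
Your proposal addresses only assertion~(vi); parts (i)--(v) of the proposition are not touched at all. The paper's own proof of Proposition~\ref{basic} is entirely by citation: each of the six assertions is referred to the relevant passage in Dixmier's book (and one to Brown--Pedersen, one to Kirchberg), with no argument written out. So for (i)--(v) you have a genuine gap relative to what the statement asks, though filling it would amount to reproducing standard textbook material.

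For assertion~(vi) itself, your argument is correct and is more explicit than the paper's bare reference to \cite[4.7.15 and \S 9.1]{Dix64}. Your reformulation of $T_1$ as ``no primitive ideal is properly contained in another'' is exactly right, the forward direction via simplicity of $\Kc(\Hc_\pi)$ is clean, and the converse contradiction via $\Jc=\pi^{-1}(\Kc(\Hc_\pi))$ works as stated. The one substantive input you flag---that for separable type~I algebras every irreducible image contains the compacts---is indeed Glimm's theorem identifying type~I with postliminal (GCR) in the separable case; this is precisely what lives in \cite[\S 9.1]{Dix64}, so your argument and the paper's citation ultimately rest on the same deep fact. What your write-up buys is transparency about \emph{where} that fact enters and why nothing weaker suffices; what the paper's citation buys is brevity and a pointer to the full structure theory rather than a sketch.
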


\begin{proof}
Assertion~\eqref{basic_T0} was noted in \cite[3.1.3]{Dix64}, 
while Assertion~\eqref{basic_typeI} was proved in \cite[3.1.6]{Dix64}. 
See \cite[\S 2.2(iv)]{BrPe09} for Assertion~\eqref{aH}. 
For Assertion~\eqref{basic_2nd-countable} see \cite[3.3.4]{Dix64} and \cite[Lemma~2.2]{Ki06}. 
Assertion~\eqref{basic_Baire} is contained in \cite[3.3.8 and 3.4.13]{Dix64}. 
For Assertion~\eqref{basic_liminal} see  \cite[4.7.15 and \S 9.1]{Dix64}. 
\end{proof}

We point out an important consequence of the above proposition, 
namely, that if $\Ac$ is a $C^*$-algebra of type~I, then the map \eqref{basic_map} is a homeomorphism.

A more recent result in  topology of the space of primitive ideals concerns nuclear $C^*$-algebras. 
Before stating it, we recall that a $C^*$-algebra $\Ac$ is \emph{nuclear} if there is a net of  finite dimensional $C^*$-algebras $F_\iota$  and   contractive  completely positive maps
$\varphi_\iota\colon \Ac \to F_\iota$ and $\phi_\iota \colon F_\iota\to \Ac$ such that 
$$ \Vert(\psi_\iota \circ \varphi_\iota)(a) - a\Vert \to 0$$
for every $a\in \Ac$. 
All type  I $C^*$-algebras are nuclear. (See \cite[Prop.~2.7.4]{BO08}.)

\begin{theorem}[{\cite[\S 5]{Ki06}}]\label{spectral}
Let $X$ be a topological space that is second countable, spectral, and has the property~$T_0$. 
Then the following assertions are equivalent: 
\begin{enumerate} [(i)]
	\item $X$ is homeomorphic to $\Prim(\Ac)$ for some separable nuclear $C^*$-algebra~$\Ac$. 
	\item There exist a locally compact Polish space $Y$ and a continuous map $\varphi\colon Y\to X$ with the following properties: 
	\begin{enumerate}
		\item For every locally closed subset $A\subseteq X$ one has $\varphi(Y)\cap A\ne\emptyset$. 
		\item One has $\overline{\bigcup\limits_{n\ge 1}\varphi^{-1}(F_n)}=\varphi^{-1}\Bigl(
		\overline{\bigcup\limits_{n\ge 1}F_n}\Bigr)$ 
		for every increasing sequence $\{F_n\}_{n\ge 1}$ in $\Fc(X)$.
	\end{enumerate}
\end{enumerate}
\end{theorem}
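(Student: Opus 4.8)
The plan is to prove the two implications separately, using the canonical lattice isomorphism $\Oc(X)\cong\Ic(\Ac)$ between the open subsets of $X=\Prim(\Ac)$ and the closed two-sided ideals of $\Ac$; under it, a homeomorphism $X\cong\Prim(\Ac)$ is the same as an isomorphism of $\Oc(X)$ onto $\Ic(\Ac)$ respecting the underlying $\sigma$-structure. Conditions (a) and (b) should be read as encoding, at the level of $Y$, precisely the separation and $\sigma$-continuity that a lattice must have in order to occur as $\Ic(\Ac)$.

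For (i) $\Rightarrow$ (ii) I start from a separable nuclear $\Ac$ with $X=\Prim(\Ac)$ and must exhibit the parametrizing data. The natural choice is a space $Y$ of pure states of $\Ac$ with $\varphi\colon Y\to X$, $\;f\mapsto\Ker\pi_f$, where $\pi_f$ is the GNS representation; continuity of $\varphi$ is standard. Property (a) then reduces to the fact that every nonempty locally closed $A\subseteq X$ corresponds to a nonzero subquotient of $\Ac$, which therefore admits a pure state, so that $A$ is met by $\varphi(Y)$; property (b) reflects the lower semicontinuity of the passage from ideals to their hull--kernel closures, where separability lets one work with increasing sequences. The one delicate point is that $Y$ must be taken \emph{locally compact} Polish while remaining large enough for (a): the full pure state space is Polish but need not be locally compact, so one must select a locally compact Polish subspace still meeting every subquotient, which is feasible because $X$ is second countable and locally quasi-compact by Proposition~\ref{basic}(v),(iv).

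The substance of the theorem is the converse (ii) $\Rightarrow$ (i), a construction problem. Since $X$ is in general non-Hausdorff it carries no continuous-field structure directly, and the role of $Y$ is exactly to supply a locally compact Hausdorff base. I would begin with the model algebra $D:=C_0(Y)\otimes\Oc_2\otimes\Kc$, which is separable and nuclear and satisfies $\Ic(D)\cong\Oc(Y)$ and $\Prim(D)\cong Y$ because $\Oc_2\otimes\Kc$ is simple and nuclear. The map $\varphi$ induces the frame homomorphism $\varphi^{*}\colon\Oc(X)\to\Oc(Y)\cong\Ic(D)$, $\;U\mapsto\varphi^{-1}(U)$. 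Here (a) forces $\varphi^{*}$ to be injective, since $U\not\subseteq V$ yields a nonempty locally closed set $U\setminus V$ that meets $\varphi(Y)$ and hence $\varphi^{-1}(U)\not\subseteq\varphi^{-1}(V)$; and (b) guarantees that $\varphi^{*}$ sends increasing suprema, computed through closures, to the corresponding suprema of $\Ic(D)$. Thus $L:=\varphi^{*}(\Oc(X))$ is a sub-$\sigma$-frame of $\Ic(D)$ isomorphic, as a topological frame, to $\Oc(X)$.

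The main obstacle is the final step: producing a separable nuclear $\Ac$ whose \emph{entire} ideal lattice is the prescribed sublattice $L$, thereby ``thinning out'' the ideals of $D$ to exactly those indexed by $\Oc(X)$. This is where the classification machinery is unavoidable: passing to $D\otimes\Oc_2\cong D$ to gain full flexibility (strongly purely infinite nuclear algebras being governed by ideal-related $KK$-theory), one constructs $\Ac$ as a separable nuclear subalgebra, assembled as an inductive limit whose building blocks have ideal supports prescribed by $L$, so that the inclusion realizes $\Ic(\Ac)\cong L\cong\Oc(X)$. The genuinely hard part is not the isomorphism of \emph{abstract} lattices but the matching of the Jacobson \emph{topologies}: one must verify that no spurious primitive ideals appear and that the hull--kernel closure on $\Prim(\Ac)$ transports to the original topology on $X$. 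Conditions (a) and (b) are exactly what certifies $L$ to be an admissible target, and I expect the control of this topology through the $\Oc_2$-absorbing limit --- yielding a genuine homeomorphism $\Prim(\Ac)\cong X$ rather than a mere bijection --- to be the crux.
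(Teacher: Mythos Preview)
The paper does not contain a proof of this theorem: it is quoted from Kirchberg's paper \cite{Ki06} (the citation \cite[\S 5]{Ki06} appears in the theorem header itself), and the paper moves on immediately to use it as a black box in Corollaries~\ref{cont-open-cor1}--\ref{cont-open-cor3}. So there is nothing in the paper to compare your proposal against.

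That said, your outline is broadly faithful to the architecture of Kirchberg's actual argument. The direction (i)$\Rightarrow$(ii) does go through a well-chosen Polish space of (pure) states, and the hard direction (ii)$\Rightarrow$(i) does proceed by realising $\Oc(X)$ as a sublattice of the ideal lattice of an $\Oc_2$-stable model and then invoking Kirchberg's classification machinery for strongly purely infinite nuclear $C^*$-algebras to produce a separable nuclear $\Ac$ with exactly that ideal lattice. You correctly identify that conditions (a) and (b) are precisely the injectivity and $\sigma$-continuity needed for the frame map $\varphi^{*}$, and that the crux is controlling the Jacobson topology on the constructed $\Ac$. What your sketch does not supply---and candidly admits---is the actual construction in the last step: building $\Ac$ as an inductive limit with prescribed ideal supports and verifying that no extra primitive ideals arise. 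That step is the substance of Kirchberg's paper and cannot reasonably be reproduced in a survey; it relies on his earlier deep results on $\Oc_2$-absorbing algebras and ideal-related $KK$-theory. So your proposal is an accurate roadmap rather than a proof, which is appropriate here since the present paper treats the result as imported.
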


\begin{lemma}[{\cite[Lemma 2.3]{Ki06}}]\label{cont-open}
If $X$ and $Y$ are topological spaces, then an onto map $\varphi\colon Y\to X$ is continuous and open if and only if for every subset $Z\subseteq X$ one has $\overline{\varphi^{-1}(Z)}=\varphi^{-1}(\overline{Z})$. 
\end{lemma}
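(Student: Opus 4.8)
The plan is to split the claimed set equality $\overline{\varphi^{-1}(Z)}=\varphi^{-1}(\overline{Z})$ into its two inclusions and to recognize each inclusion as a reformulation of one of the two properties of $\varphi$: the inclusion $\overline{\varphi^{-1}(Z)}\subseteq\varphi^{-1}(\overline{Z})$ will correspond to continuity, while the reverse inclusion $\varphi^{-1}(\overline{Z})\subseteq\overline{\varphi^{-1}(Z)}$ will correspond to openness. Since both inclusions are required for every subset $Z\subseteq X$, the equivalence with ``continuous and open'' then follows by conjunction. I note in passing that surjectivity of $\varphi$ will play no essential role, so the argument in fact establishes the equivalence for an arbitrary map.

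For the continuity half I would recall that $\varphi$ is continuous if and only if $\varphi^{-1}(F)$ is closed in $Y$ for every closed $F\subseteq X$. If $\varphi$ is continuous, then for any $Z\subseteq X$ the set $\varphi^{-1}(\overline{Z})$ is closed and contains $\varphi^{-1}(Z)$, hence contains $\overline{\varphi^{-1}(Z)}$; this gives one inclusion for all $Z$. Conversely, assuming $\overline{\varphi^{-1}(Z)}\subseteq\varphi^{-1}(\overline{Z})$ for all $Z$, I would apply it to a closed set $Z=F$, where $\overline{F}=F$ forces $\varphi^{-1}(F)$ to be closed, so $\varphi$ is continuous. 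This step is entirely routine.

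The openness half is the substantive point. Passing to complements in $Y$ and using $Y\setminus\varphi^{-1}(S)=\varphi^{-1}(X\setminus S)$ together with $Y\setminus\overline{A}=\operatorname{int}(Y\setminus A)$, I would rewrite the inclusion $\varphi^{-1}(\overline{Z})\subseteq\overline{\varphi^{-1}(Z)}$ as $\operatorname{int}(\varphi^{-1}(U))\subseteq\varphi^{-1}(\operatorname{int}(U))$, where $U:=X\setminus Z$ ranges over all subsets of $X$ as $Z$ does. It then remains to show that this family of inclusions is equivalent to openness of $\varphi$. If $\varphi$ is open, then for any $U$ the set $V:=\operatorname{int}(\varphi^{-1}(U))$ is open, so $\varphi(V)$ is open and contained in $\varphi(\varphi^{-1}(U))\subseteq U$, whence $\varphi(V)\subseteq\operatorname{int}(U)$ and therefore $V\subseteq\varphi^{-1}(\operatorname{int}(U))$. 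Conversely, assuming the interior inclusions, for an open $V\subseteq Y$ I would set $U:=\varphi(V)$ and use $V\subseteq\varphi^{-1}(U)$ to get $V\subseteq\operatorname{int}(\varphi^{-1}(U))\subseteq\varphi^{-1}(\operatorname{int}(U))$, which forces $\varphi(V)\subseteq\operatorname{int}(U)=\operatorname{int}(\varphi(V))$ and hence shows $\varphi(V)$ open.

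The main obstacle is precisely this openness equivalence: the reformulation via complements must be carried out carefully, since the closure on the right-hand side turns into an interior and the direction of the inclusion reverses, and one must keep track of which elementary containments ($\varphi(\varphi^{-1}(U))\subseteq U$ and $V\subseteq\varphi^{-1}(\varphi(V))$) are being invoked. Once the inclusion is in the interior form, the two implications are short. Combining the continuity and openness halves yields the stated equivalence.
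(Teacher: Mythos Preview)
The paper does not actually give a proof of this lemma; it simply records the statement with a citation to \cite[Lemma~2.3]{Ki06}. So there is nothing to compare your argument against in this text.

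Your proof is correct. The decomposition into the two inclusions, with $\overline{\varphi^{-1}(Z)}\subseteq\varphi^{-1}(\overline{Z})$ encoding continuity and $\varphi^{-1}(\overline{Z})\subseteq\overline{\varphi^{-1}(Z)}$ encoding openness, is the natural one, and your passage to the interior formulation $\operatorname{int}(\varphi^{-1}(U))\subseteq\varphi^{-1}(\operatorname{int}(U))$ via complements is clean and accurate. Both directions of the openness equivalence are handled correctly using only the universally valid containments $\varphi(\varphi^{-1}(U))\subseteq U$ and $V\subseteq\varphi^{-1}(\varphi(V))$. Your side observation that surjectivity is never invoked is also right: the equivalence holds for an arbitrary map $\varphi\colon Y\to X$, and the hypothesis that $\varphi$ is onto is superfluous for the lemma as stated.
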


Now we can draw the following simple consequences of Theorem~\ref{spectral}.

\begin{corollary}\label{cont-open-cor1}
Let $X$ be a topological space that is 
spectral and has the property~$T_0$. 
If there exist a locally compact Polish space $Y$ and a surjective map $\varphi\colon Y\to X$ which is continuous and open, then $X$ is homeomorphic to $\Prim(\Ac)$ for some separable nuclear $C^*$-algebra~$\Ac$.
\end{corollary}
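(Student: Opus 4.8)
The plan is to deduce this directly from Theorem~\ref{spectral}, by checking that both the standing hypotheses of that theorem and its condition~(ii) are met for the given data. A subtle point is that Theorem~\ref{spectral} carries the standing assumption that $X$ be second countable, which is not explicitly listed among the hypotheses here, so the first thing I would do is recover it. Since $Y$ is Polish it is second countable; fix a countable base $\{U_n\}_{n\ge1}$ of $Y$. Because $\varphi$ is open, each $\varphi(U_n)$ is open in $X$, and because $\varphi$ is a continuous surjection these sets form a base of $X$: given an open $V\subseteq X$ and a point $x\in V$, any preimage $y\in\varphi^{-1}(x)$ lies in the open set $\varphi^{-1}(V)$, hence in some $U_n\subseteq\varphi^{-1}(V)$, whence $x\in\varphi(U_n)\subseteq V$. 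Thus $X$ is second countable, and together with the assumed spectral and $T_0$ properties this places us squarely in the setting of Theorem~\ref{spectral}.

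It then remains to verify condition~(ii) of that theorem for the map $\varphi$, which is continuous by hypothesis. Condition~(a) is immediate from surjectivity: since $\varphi(Y)=X$, for every nonempty locally closed subset $A\subseteq X$ one has $\varphi(Y)\cap A=A\ne\emptyset$. For condition~(b) I would invoke Lemma~\ref{cont-open}: as $\varphi$ is an onto map that is continuous and open, that lemma gives
\begin{equation*}
\overline{\varphi^{-1}(Z)}=\varphi^{-1}(\overline{Z})\quad\text{for every }Z\subseteq X.
\end{equation*}
Applying this with $Z=\bigcup_{n\ge1}F_n$ and using the elementary identity $\varphi^{-1}\bigl(\bigcup_{n\ge1}F_n\bigr)=\bigcup_{n\ge1}\varphi^{-1}(F_n)$ yields exactly the identity required in~(b), for any increasing sequence $\{F_n\}_{n\ge1}$ in $\Fc(X)$ (in fact for an arbitrary such sequence).

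With (a) and (b) established, the implication (ii)$\Rightarrow$(i) of Theorem~\ref{spectral} produces a separable nuclear $C^*$-algebra $\Ac$ with $X$ homeomorphic to $\Prim(\Ac)$, which is the assertion. I do not anticipate any genuine obstacle: once Lemma~\ref{cont-open} is in hand the verification is purely formal, and the only step that demands a moment's care is the recovery of second countability of $X$ from that of the Polish space $Y$ through the openness of $\varphi$.
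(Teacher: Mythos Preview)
Your proof is correct and follows exactly the route taken in the paper: first recover second countability of $X$ from that of the Polish space $Y$ via the continuous open surjection, then invoke Theorem~\ref{spectral} together with Lemma~\ref{cont-open}. You have simply spelled out in detail what the paper summarizes in one line.
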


\begin{proof}
	Since $Y$ is second countable and the map $\varphi\colon Y\to X$ is continuous, open, and surjective, it is straightforward to prove that $X$ is second countable as well. 
Use then Theorem~\ref{spectral} along with Lemma~\ref{cont-open}. 
\end{proof}

\begin{corollary}\label{cont-open-cor2}
	If $X$ is a topological space that has the property~$T_1$  
	and there exist a locally compact Polish space $Y$ and a surjective map $\varphi\colon Y\to X$ which is continuous and open, then $X$ is homeomorphic to $\Prim(\Ac)$ for some separable nuclear $C^*$-algebra~$\Ac$, and every type I  $C^*$-algebra with this property is liminal.
\end{corollary}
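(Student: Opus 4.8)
The plan is to deduce the first assertion from Corollary~\ref{cont-open-cor1} and the second from the converse part of Proposition~\ref{basic}\eqref{basic_liminal}. Corollary~\ref{cont-open-cor1} asks that $X$ be spectral and have property~$T_0$, together with exactly the locally compact Polish space $Y$ and the continuous open surjection $\varphi\colon Y\to X$ that we are handed; since property~$T_1$ implies property~$T_0$, the whole first assertion reduces to verifying that $X$ is spectral. Two of the ingredients are routine transfers along $\varphi$. Second countability passes from $Y$ to $X$ exactly as in the proof of Corollary~\ref{cont-open-cor1}. Local quasi-compactness also transfers: given $x=\varphi(y)$ and an open $U\ni x$, I would pick a compact neighborhood $L$ of $y$ with $L\subseteq\varphi^{-1}(U)$; then $\varphi(L)$ is quasi-compact and contained in $U$, and it is a neighborhood of $x$ because $\varphi$ sends the interior of $L$ to an open set containing $x$.

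The substantive point, and the step I expect to be the main obstacle, is sobriety. One is tempted to read it off from property~$T_1$ alone, but this fails: the cofinite topology on a countably infinite set is a second countable $T_1$ space that is not sober, so $\varphi$ must be used essentially. My plan is a Baire-category argument. Let $C\subseteq X$ be a nonempty irreducible closed set; as $X$ is $T_1$ it suffices to show $C$ is a singleton. Put $K:=\varphi^{-1}(C)$, a closed subspace of $Y$, hence itself locally compact Polish and in particular a nonempty Baire space; the restriction $\varphi|_K\colon K\to C$ is a continuous open surjection (open onto $C$ because $K$ is $\varphi$-saturated). Choose a countable basis $\{V_j\}_{j\ge 1}$ of the second countable space $C$ by nonempty sets. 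Each $V_j$ is dense in $C$ since $C$ is irreducible, and $\bigcap_{j\ge 1}V_j=\emptyset$: if $c\in C$ and $C$ had another point, then $C\setminus\{c\}$ would be a nonempty open set (this is where $T_1$ enters) containing some $V_j$, so $c\notin V_j$. Because $\varphi|_K$ is a continuous open surjection, each $\varphi^{-1}(V_j)$ is dense and open in $K$, so each $K\setminus\varphi^{-1}(V_j)$ is closed with empty interior; yet $\bigcup_{j\ge 1}\bigl(K\setminus\varphi^{-1}(V_j)\bigr)=K\setminus\varphi^{-1}\bigl(\bigcap_{j\ge 1}V_j\bigr)=K$, exhibiting the nonempty Baire space $K$ as meager in itself, a contradiction unless $C$ is a singleton. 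Thus $X$ is sober; together with the local quasi-compactness above this makes $X$ spectral, and Corollary~\ref{cont-open-cor1} produces a separable nuclear $\Ac$ with $\Prim(\Ac)$ homeomorphic to $X$.

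For the second assertion, let $\Bc$ be a type~I $C^*$-algebra with $\Prim(\Bc)$ homeomorphic to $X$. Then $\Prim(\Bc)$ has property~$T_1$, which means precisely that every primitive ideal is maximal among the proper closed two-sided ideals (a point $\Jc$ is closed exactly when no primitive ideal properly contains it, and one checks that this already forces maximality among all proper closed ideals). Now fix a nonzero irreducible representation $\pi$ of $\Bc$ on $\Hc_\pi$. Since $\Bc$ is of type~I we have $\Kc(\Hc_\pi)\subseteq\pi(\Bc)$, and $\pi(\Bc)$ is norm closed, being the image of a $*$-homomorphism. Were the inclusion strict, then $\pi^{-1}(\Kc(\Hc_\pi))$ would be a closed two-sided ideal with $\Ker\pi\subsetneq\pi^{-1}(\Kc(\Hc_\pi))\subsetneq\Bc$, contradicting the maximality of $\Ker\pi$. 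Hence $\pi(\Bc)=\Kc(\Hc_\pi)$ for every such $\pi$, so $\Bc$ is liminal; this is the content of the converse in Proposition~\ref{basic}\eqref{basic_liminal}, and the argument just sketched shows that, once type~I is read as the condition $\Kc(\Hc_\pi)\subseteq\pi(\Bc)$ on irreducible representations, no separability hypothesis is needed.
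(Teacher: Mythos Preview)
Your argument is correct and is in fact more complete than the paper's. The paper's proof simply asserts: ``As $X$ has the property~$T_1$, it also has the property $T_0$ and is a spectral space,'' then invokes Corollary~\ref{cont-open-cor1} and, for liminality, Proposition~\ref{basic}\eqref{basic_liminal}. The implication $T_1\Rightarrow$ spectral is exactly the step you flag as unjustified (the cofinite topology on a countably infinite set is a genuine counterexample to it), and the paper supplies no argument; your Baire-category reduction through $\varphi$ fills this gap and, appropriately, makes essential use of the Polish hypothesis on~$Y$. One minor point: the paper's definition of ``spectral'' in Appendix~\ref{AppA} records only the sobriety condition, so your verification of local quasi-compactness, while correct, is not needed in order to invoke Corollary~\ref{cont-open-cor1}.

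For the liminality assertion the paper appeals to the converse in Proposition~\ref{basic}\eqref{basic_liminal}, which as stated there requires the type~I algebra to be separable; the corollary itself imposes no such restriction. Your direct argument (primitive ideals are maximal among all proper closed two-sided ideals once $\Prim$ is $T_1$, and then the type~I inclusion $\Kc(\Hc_\pi)\subseteq\pi(\Bc)$ is forced to be an equality) dispenses with separability and therefore matches the corollary as actually written.
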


\begin{proof}
	As $X$ has the property~$T_1$, it also has the property $T_0$ and is a spectral space. 
	It now follows by Corollary~\ref{cont-open-cor1} that $X$ is homeomorphic to $\Prim(\Ac)$ for some separable nuclear $C^*$-algebra~$\Ac$. 
	Finally, by Proposition~\ref{basic} \eqref{basic_liminal}, since $\Ac$ is of type I,  separable and  $\Prim(\Ac)$ is $T_1$, it follows that $\Ac$ is liminal. 
\end{proof}

\begin{corollary}\label{cont-open-cor3}
	Let $Y$ be any locally compact Polish space and $G$ be any group with a group action $G\times Y\to Y$ whose orbits are locally closed subsets of $Y$. 
	Then the corresponding orbit space $Y/G$ endowed with its quotient topology is homeomorphic to $\Prim(\Ac)$ for some 
	separable nuclear $C^*$-algebra $\Ac$. 
	All the orbits of the group action of $G$ in $Y$ are closed if and only if any/every type I $C^*$-algebra with the above property is liminal. 
\end{corollary}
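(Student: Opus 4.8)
The plan is to present $Y/G$ as a continuous open image of the locally compact Polish space $Y$ and then invoke Corollaries~\ref{cont-open-cor1} and~\ref{cont-open-cor2}. First I would record that the canonical quotient map $q\colon Y\to Y/G$, $y\mapsto Gy$, is surjective and continuous by the definition of the quotient topology, and open: for every open $U\subseteq Y$ one has $q^{-1}(q(U))=\bigcup_{g\in G}gU$, a union of open sets since each translation $y\mapsto g\cdot y$ is a homeomorphism of $Y$, so $q(U)$ is open. Being a continuous open surjection, $q$ satisfies $\overline{q^{-1}(Z)}=q^{-1}(\overline Z)$ for every $Z\subseteq Y/G$ by Lemma~\ref{cont-open}; applied to $Z=\{Gy\}$, whose preimage is the orbit $Gy$, this gives $q^{-1}(\overline{\{Gy\}})=\overline{Gy}$, a formula I would use throughout.

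For the first assertion I would check the hypotheses of Corollary~\ref{cont-open-cor1}. The property $T_0$ is where local closedness enters: if two distinct orbits satisfied $Gy_1\subseteq\overline{Gy_2}$ and $Gy_2\subseteq\overline{Gy_1}$, then $\overline{Gy_1}=\overline{Gy_2}=:F$, and since $Gy_1$ is locally closed it is open in $F$, while $Gy_2$ is dense in $F$; the nonempty relatively open set $Gy_1$ would then meet the dense set $Gy_2$, contradicting disjointness of distinct orbits. Hence $Y/G$ is $T_0$. That $Y/G$ is second countable and spectral should follow from the definitions in Appendix~\ref{AppA}, these being the properties that pass from the locally compact Polish space $Y$ to its continuous open image, exactly as in the proofs of the preceding two corollaries. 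Corollary~\ref{cont-open-cor1} then produces a separable nuclear $\Ac$ with $Y/G\cong\Prim(\Ac)$.

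For the second assertion I would first translate closedness of orbits into a separation axiom: from $q^{-1}(\overline{\{Gy\}})=\overline{Gy}$ one gets $\overline{\{Gy\}}=\{Gy\}$ if and only if $\overline{Gy}=Gy$, so $Y/G$ is $T_1$ precisely when every orbit is closed. If all orbits are closed, then $Y/G$ is $T_1$ and Corollary~\ref{cont-open-cor2} shows that every type~I $C^*$-algebra with primitive ideal space homeomorphic to $Y/G$ is liminal. Conversely, if some type~I $C^*$-algebra $\Ac$ with $\Prim(\Ac)\cong Y/G$ is liminal, then $\Prim(\Ac)$ has the property $T_1$ by Proposition~\ref{basic}\eqref{basic_liminal}, whence $Y/G$ is $T_1$ and all orbits are closed; this yields the biconditional in both its ``any'' and ``every'' readings.

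I expect the main obstacle to be the verification of the property $T_0$ --- specifically the interplay between the openness of a locally closed orbit in its own closure and the density of another orbit with the same closure, which is precisely where the local closedness hypothesis is indispensable --- together with confirming that ``spectral'' in the sense of Appendix~\ref{AppA} is inherited along the open surjection $q$. By contrast, the liminality equivalence is bookkeeping with Proposition~\ref{basic}\eqref{basic_liminal}, whose implication ``liminal $\Rightarrow T_1$'' requires neither separability nor type~I, so no separate existence of a type~I model of $Y/G$ is needed.
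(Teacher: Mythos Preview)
Your proposal is correct and follows the same route as the paper's proof: verify that the quotient map $q$ is continuous, open, and surjective, translate locally closed (resp.\ closed) orbits into the property $T_0$ (resp.\ $T_1$) of $Y/G$, and then invoke Corollaries~\ref{cont-open-cor1}--\ref{cont-open-cor2} together with Proposition~\ref{basic}\eqref{basic_liminal}. You supply considerably more detail than the paper, which records these equivalences as well known with a pointer to \cite{BB17}; your caution about the spectral hypothesis is apt, but note that Corollary~\ref{cont-open-cor1} \emph{assumes} spectrality rather than deriving it from the open surjection, so your appeal ``exactly as in the proofs of the preceding two corollaries'' is slightly off --- the paper's own proof of Corollary~\ref{cont-open-cor3} likewise leaves this verification implicit.
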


\begin{proof} 
	It is well known that for every group action on a topological space its corresponding quotient map on the orbit space is continuous, open, and surjective. 
	Moreover, the orbit space has the property $T_0$, respectively $T_1$, if and only if all orbits are locally closed, respectively all orbits are closed (see for instance the proof of \cite[Prop. 2.5]{BB17}). 
	Thus the assertion follows by Corollaries \ref{cont-open-cor1}--\ref{cont-open-cor2}. 
\end{proof}

\begin{remark}
	\normalfont
Using the same method of proof, the assertion of Corollary~\ref{cont-open-cor3} carries over from transformation groups to more general groupoids. 
More specifically, let $\Gc\rightrightarrows Y$ be a locally compact groupoid having a Haar system, whose orbits are locally closed, and whose base $Y$ is a Polish space. 
Then the corresponding orbit space $\Gc\setminus Y$ endowed with its quotient topology is homeomorphic to $\Prim(\Ac)$ for some 
separable nuclear $C^*$-algebra $\Ac$. 
All the orbits of the groupoid $\Gc$ are closed subsets of $Y$ if and only if any/every type~I  $C^*$-algebra with the above property is liminal. 
\end{remark}

We now recall from \cite{Ki06} the notion of generalized Gelfand transform. 

\begin{definition}
\normalfont 
Let $\Ac$ be a $C^*$-algebra. 
For any $a\in\Ac$, its \emph{generalized Gelfand transform} is the function 
$$N(a)\colon\Prim(\Ac)\to[0,\infty),\quad N(a)(\Jc):=\Vert a+\Jc\Vert$$
where we used the $C^*$-norm of the primitive quotient $\Ac/\Jc$ for any $\Jc\in\Prim(\Ac)$. 
\end{definition}

\begin{theorem}[{\cite[Th. 1.5]{Ki06}}]\label{Dini1}
For every $C^*$-algebra $\Ac$ the set of all Dini functions on $\Prim(\Ac)$ is exactly $\{N(a)\mid a\in\Ac\}$. 
If $\Ac$ is separable, then $\Prim(\Ac)$ is a Dini space. 
\end{theorem}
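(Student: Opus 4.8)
The plan is to prove the two inclusions behind the identity $\{\text{Dini functions}\}=\{N(a)\mid a\in\Ac\}$ separately, and to treat the separable case at the end. Throughout I would lean on the order-reversing correspondence between closed two-sided ideals $\Ic\trianglelefteq\Ac$ and closed subsets $\mathrm{hull}(\Ic)=\{\Jc\in\Prim(\Ac)\mid\Ic\subseteq\Jc\}$, together with the basic identity $\Vert a+\Ic\Vert=\sup\{N(a)(\Jc)\mid\Jc\in\mathrm{hull}(\Ic)\}$, which expresses the quotient norm of $\Ac/\Ic$ through its primitive quotients (here $\Ic=\bigcap_{\Jc\in\mathrm{hull}(\Ic)}\Jc$, so the irreducible representations of $\Ac/\Ic$ are a faithful family).

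First I would establish that every $N(a)$ is a Dini function. Each $N(a)$ is bounded, with $0\le N(a)(\Jc)\le\Vert a\Vert$, and lower semicontinuous: for $a\ge0$ one has $\{\Jc\mid\Vert a+\Jc\Vert>\lambda\}=\{\Jc\mid(a-\lambda)_+\notin\Jc\}$, which is open, and the general case follows from $N(a)=N(a^*a)^{1/2}$. The substantial point is the Dini-type monotone interchange. For an increasing sequence of closed ideals $\Ic_1\subseteq\Ic_2\subseteq\cdots$ with $\Ic=\overline{\bigcup_n\Ic_n}$, density of $\bigcup_n\Ic_n$ in $\Ic$ and continuity of $b\mapsto\Vert a-b\Vert$ give
$$\Vert a+\Ic\Vert=\inf_{b\in\Ic}\Vert a-b\Vert=\inf_n\inf_{b\in\Ic_n}\Vert a-b\Vert=\lim_n\Vert a+\Ic_n\Vert,$$
a monotone limit. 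Since $\mathrm{hull}(\Ic)=\bigcap_n\mathrm{hull}(\Ic_n)$, transporting this identity through the hull--kernel correspondence says exactly that $\sup N(a)$ over a decreasing sequence of (hull-type) closed sets passes to the limit, which is the monotone-interchange condition defining a Dini function.

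The converse inclusion is where the real work lies: given an abstract Dini function $f$ on $\Prim(\Ac)$, I must produce a single $a\in\Ac$ with $N(a)=f$. The basic tool is the functional-calculus identity $N(g(b))=g\circ N(b)$, valid for $b\ge0$ and continuous nondecreasing $g\colon[0,\infty)\to[0,\infty)$ with $g(0)=0$, which holds because the quotient maps $\Ac\to\Ac/\Jc$ intertwine continuous functional calculus and the norm of a positive element is the top of its spectrum. Since $N(b)=N(b^*b)^{1/2}$ it suffices to realize positive Dini functions, and this identity shows the attainable transforms are stable under composition with such $g$ and, through $N(b+c)$ together with approximate units, under the operations needed to prescribe a lower semicontinuous profile piece by piece. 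I would therefore build $a$ as the norm limit of a sequence $a_1,a_2,\dots$ whose transforms match $f$ on an increasing exhaustion of $\Prim(\Ac)$ by locally closed sets, the Dini property of $f$ being precisely what forces these successive corrections to converge in norm to an element whose transform equals $f$ everywhere at once. I expect this realization step to be the main obstacle, since it is an honest surjectivity statement: the monotone interchange of the previous paragraph must be arranged to hold simultaneously at every primitive ideal, and it is the defining Dini condition---rather than mere lower semicontinuity---that prevents the limiting profile from overshooting.

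Finally, for separable $\Ac$ I would invoke Proposition~\ref{basic}\eqref{basic_2nd-countable}, by which $\Prim(\Ac)$ is second countable and spectral. Second countability reduces the defining requirement of a Dini space to a countable, sequential condition phrased in terms of increasing sequences of closed sets, which can then be checked directly against the monotone-limit identity $\Vert a+\overline{\bigcup_n\Ic_n}\Vert=\lim_n\Vert a+\Ic_n\Vert$ obtained above. In this way separability promotes the pointwise statement about individual transforms to the global assertion that $\Prim(\Ac)$ is itself a Dini space.
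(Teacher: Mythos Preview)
The paper does not prove this theorem at all: it is stated with the attribution \cite[Th.~1.5]{Ki06} and no argument is given. So there is no ``paper's own proof'' to compare against; the result is simply quoted from Kirchberg.

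That said, a few remarks on your sketch. Your first direction has a mismatch with the definition actually used in the paper (Appendix~\ref{AppA}): a Dini function is one for which every upward-directed net of lower semicontinuous functions with pointwise supremum $f$ converges \emph{uniformly} to $f$. Your argument instead establishes a monotone interchange along increasing chains of closed ideals, i.e., along decreasing chains of closed subsets of $\Prim(\Ac)$. Passing from that to the stated net-of-functions condition requires an additional step (essentially approximating each $f_j$ from below by functions of the form $N(b_j)$ and using the Dauns--Hofmann/Kirchberg machinery to relate pointwise suprema of such functions to ideal closures); you have not supplied that bridge. For the converse inclusion you correctly identify the difficulty as a genuine surjectivity statement, but your outline---building $a$ as a norm limit of elements whose transforms match $f$ on an exhaustion by locally closed sets---is too schematic to count as a proof: the crux in Kirchberg's argument is a careful construction that controls $N(a_n)$ simultaneously from above and below on nested pieces of the spectrum, and the Dini hypothesis enters precisely to guarantee Cauchy-ness of the approximating sequence. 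Your final paragraph on the separable case also does not quite land: being a Dini space requires that the Dini functions generate the topology, and this follows from the first part of the theorem together with the fact that the sets $\{\Jc\mid a\notin\Jc\}$ form a base, not from a sequential reduction of the Dini condition.
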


\section{Solvable topological spaces and special $\RR$-spaces}\label{section3}

In this section we provide the topological background 
that is needed for describing some special properties of unitary dual spaces of connected simply connected nilpotent Lie groups in Section~\ref{section-nilpotent}.  
The notions of solvable topological spaces and special $\RR$-spaces that were introduced in \cite{BBL17}, 
and we discuss them here in some detail along with many examples.

\begin{definition}\label{solvable_space}
	\normalfont
	A topological space $X$ is called \emph{solvable} if it is second countable, is locally quasi-compact, has the property $T_0$, and 
	there exists an increasing finite family of open subsets, 
	called a \emph{solving series},  
	$$\emptyset=V_0\subseteq V_1\subseteq\cdots\subseteq V_n=X, $$
	for which the set $V_j\setminus V_{j-1}$ is Hausdorff in its relative topology for $j=1,\dots,n$.
\end{definition}

\begin{remark}
	\normalfont
	In Definition~\ref{solvable_space} the set 
	$V_j\setminus V_{j-1}$ is actually locally compact in its relative topology for $j=1,\dots,n$. 
\end{remark}

\begin{definition}\label{strongly_solvable_space}
	\normalfont
	A topological space $X$ is called \emph{strongly solvable} if it is solvable, has the property $T_1$, and 
	has a solving series,  
	$\emptyset=V_0\subseteq V_1\subseteq\cdots\subseteq V_n=X$ 
	for which the set $V_j\setminus V_{j-1}$ is dense in  $X\setminus V_{j-1}$ for $j=1,\dots,n$.
\end{definition}

\begin{definition}\label{R-space}
	\normalfont
	A \emph{special $\RR$-space} is a topological space $X$ endowed with a 
	continuous map $\RR\times X\to X$, $(t,x)\mapsto t\cdot x$, and with a distinguished point $x_0\in X$ 
	satisfying the following conditions: 
	\begin{enumerate}
		\item For every $x\in X$ and $t\in \RR$ one has $0\cdot x= t \cdot x_0=x_0$ and $1\cdot x=x$. 
		\item For all $t,s\in\RR$ and $x\in X$ one has $t\cdot(s\cdot x)=ts\cdot x$. 
		\item\label{R-space_item3}
		 For every $x\in X\setminus \{x_0\}$ the map $\psi_x\colon \RR\to X$, $t\mapsto t\cdot x$ is a homeomorphism onto its image. 
	\end{enumerate}
	An \emph{$\RR$-subspace} of the  special $\RR$-space $X$ is any subset $\Gamma\subseteq X$ 
	such that $\RR^\times \cdot \Gamma\subseteq\Gamma$. 
	If this is the case, then $\Gamma\cup\{x_0\}$ is a special $\RR$-space on its own. 
	
	If $Y$ is another special $\RR$-space with its structural map $\RR\times X\to X$, $(t,x)\mapsto t\cdot x$ and its distinguished point $y_0\in Y$, then a map $\psi\colon X\to Y$ is called an \emph{isomorphism of special $\RR$-spaces} if $\psi$ is a homeomorphism and $\psi(t\cdot x)=t\cdot \psi(x)$ for all $t\in\RR$ and $x\in X$. 
	
	In the above framework, a function $\varphi\colon X\to\RR$ is called \emph{homogeneous} if there exists $r\in[0,\infty)$ 
	such that $\varphi(t\cdot x)=t^r\varphi(x)$ for all $t\in\RR$ and $x\in X$. 
\end{definition} 

\begin{remark}\label{R-space_rem}
	\normalfont
If $X$ is a special $\RR$-space, then one has: 
\begin{enumerate}
	\item If $t\in\RR^\times$ and $x\in X\setminus\{x_0\}$, then $t\cdot x=x_0$ if and only if $t=0$. 
	
	In fact, since $0\cdot x=x_0$, the equality  $t\cdot x=x_0$ is equivalent to $\psi_x(t)=\psi_x(0)$. 
	Using the fact that the map $\psi_x$ is injective, the latter equality is equivalent to $t=0$.   
	\item If $\psi\colon X\to Y$ is  an isomorphism of special $\RR$-spaces then $\psi(x_0)=y_0$. 
	
	In fact, $\psi(x_0)=\psi(0\cdot x_0)=0\cdot\psi(x_0)=y_0$. 
	\item The space $X$ is pathwise connected. 
\end{enumerate}
\end{remark}

\begin{example}[semi-algebraic cones]
	\label{semialg}
	\normalfont
	Every finite-dimensional real vector space is a special $\RR$-space. 
	Moreover, if $\varphi_1,\dots,\varphi_{n_1},\psi_1,\dots,\psi_{n_2}\colon\RR^m\to\RR$ 
	are any homogeneous polynomials, then the semi-algebraic cone 
	$$\Gamma:=\{x\in\RR^n\mid\varphi_{j_1}(x)=0\ne\psi_{j_2}(x)\text{ for }1\le j_1\le n_1\text{ and }1\le j_2\le n_2\}$$
	is an $\RR$-subspace of $\RR^m$ in the sense of Definition~\ref{R-space}. 
\end{example}

\begin{example}[orbit spaces of linear group actions]
	\label{orbits}
	\normalfont
	The orbit space of any linear group action has the natural structure of a special $\RR$-space. 
	We explain this in the case of the coadjoint action, for later use. 
	
	If  $G$ is a connected and simply connected  nilpotent Lie group, then the special $\RR$-space structure of $\gg^*$ 
	gives rise to a special $\RR$-space structure of the orbit space $\gg^*/G$ (hence also of the unitary dual space $\widehat{G}$ 
	via Kirillov's correspondence), and the vector space of characters $[\gg,\gg]^\perp$, 
	viewed as the set of singleton orbits, is an $\RR$-subspace of $\gg^*/G$. 
	More specifically, the special $\RR$-space structure of $\gg^*/G$ is given by the map 
	$$\RR\times (\gg^*/G)\to\gg^*/G, \quad (t,\Oc_{\xi})\mapsto\Oc_{t\xi}$$
	where we denote by $\Oc_\xi$ the coadjoint orbit of every $\xi\in\gg^*$. 
\end{example}

\begin{remark}
	\normalfont
Assume the setting of Example~\ref{orbits}. 
Using the canonical homeomorphism  
$\widehat{G}\simeq\gg^*/G$, 
it follows that $\widehat{G}$ carries a continuous action of the group $\RR^\times$. 
Pulling back this group action by the group homomorphism $\RR\to\RR^\times$, $t\mapsto \ee^t$, 
we then obtain a natural action of $(\RR,+)$ on $\widehat{G}$, 
$\alpha\colon\RR\times\widehat{G}\to\widehat{G}$.  

On the other hand, if $\beta\colon\RR\times G\to G$ is any continuous action of $(\RR,+)$ by automorphisms of $G$, 
this defines a continuous action $\widehat{\beta}\colon\RR\times\widehat{G}\to\widehat{G}$, 
$(t,[\pi])\mapsto[\pi\circ\beta_t]=:\widehat{\beta}_t$.  
Therefore one may wonder whether $\alpha=\widehat{\beta}$ for a suitable $\beta$. 
In other words, whether the above canonical flow $\alpha$ on $\widehat{G}$ comes from some 1-parameter automorphism group of~$G$. 
This is clearly the case if $G=(\Vc,+)$ for some finite-dimensional real vector space, however it does not hold true for any nilpotent Lie group~$G$. 

For instance, let $G$ be a connected simply connected nilpotent Lie group whose Lie algebra $\gg$ is characteristically nilpotent, that is, every derivation of $\gg$ is a nilpotent linear map. 
(See for instance \cite[Ex.~7.5]{BBG15} for a self-contained discussion of a specific example of characteristically nilpotent Lie algebra.) 
Let us assume $\alpha_t=\widehat{\beta}_t$ for every $t\in\RR$, for a suitable $\beta$ as above. 
Since $\beta$ is a 1-parameter automorphism group of $G$, there exists a derivation $D\colon\gg\to\gg$ with $\beta_t=\ee^{tD}$ for every $t\in\RR$. 
As $D$ is a derivation of $\gg$, one has $D([\gg,\gg])\subseteq[\gg,\gg]$, hence there is a well-defined map $\widetilde{D}\colon \gg/[\gg,\gg]\to\gg/[\gg,\gg]$, $x+[\gg,\gg]\mapsto Dx+[\gg,\gg]$. 
As $D$ is nilpotent, so are both $\widetilde{D}$ and its dual map $\widetilde{D}^*\colon (\gg/[\gg,\gg])^*\to(\gg/[\gg,\gg])^*$. 
We now note that there is a canonical linear isomorphism 
from $(\gg/[\gg,\gg])^*$ onto the space of characters $[\gg,\gg]^\perp$ and via this isomorphism the map $\ee^{t\widetilde{D}^*}$ is the restriction of $\widehat{\beta}_t$ to $[\gg,\gg]^\perp$ ($\hookrightarrow\widehat{G}$) since $\ee^{tD}=\beta_t$. 
If we assume $\widehat{\beta}=\alpha$, then 
$\ee^{t\widetilde{D}^*}\xi=\widehat{\beta}_t(\xi)=\alpha_t(\xi)=\ee^t\xi$ for every $\xi\in(\gg/[\gg,\gg])^*\simeq[\gg,\gg]^\perp\hookrightarrow\widehat{G}$ and for every $t\in\RR$, hence $\widetilde{D}^*\xi=\xi$, which is impossible since we have seen above that the linear map $\widetilde{D}^*$ is nilpotent. 
\end{remark}

\begin{example}[complement of an $\RR$-subspace]\label{complem}
	\normalfont
If $X$ is a special $\RR$-space and $\Gamma\subseteq X$ is an $\RR$-subspace of $X$, then $X\setminus\Gamma$ is also an $\RR$-subspace of~$X$.

In fact, we must check that if $x\in X\setminus\Gamma$ and $t\in\RR^\times$, then $t\cdot x\in X\setminus\Gamma$. 
Assuming $t\cdot x\in\Gamma$, we obtain $t^{-1}\cdot(t\cdot x)\in\Gamma$, that is, $x\in\Gamma$, which is a contradiction. 
\end{example}

\begin{example}[the special $\RR$-space $\RR$]\label{1dim}
	\normalfont
If $X$ is a special $\RR$-space and $X$ is homeomorphic to $\RR$, then for every $x\in X\setminus\{x_0\}$ the map 
$\psi_x\colon\RR\to X$, $t\mapsto t\cdot x$, is a homeomorphism, and moreover $\psi_x$ is an isomorphism of special $\RR$-spaces between $X$ and $\RR$ (regarded as a 1-dimensional real vector space). 

In fact, if $x\in X\setminus\{x_0\}$, 
then the map $\psi_x$ is a homeomorphism onto its image by Definition~\ref{R-space}, hence it remains to prove that $\psi_x$ is surjective. 
Since $\psi_x$ is continuous and $\RR$ is connected, it follows that $\psi_x(\RR)$ is a connected subset of $X$. 
By hypothesis there exists a homeomorphism 
$\theta\colon X\to\RR$. 
Since the connected subsets of $\RR$ are the intervals, 
there exists an interval $J_x\subseteq\RR$ for which $\theta\circ \psi_x\colon \RR\to J_x$ is a homeomorphism. 
If the interval $J$ is not open, then there exists $a_0\in J$ 
(namely one endpoint of $J$ that belongs to $J$) such that $J_x\setminus\{a_0\}$ is connected. 
Since  $\theta\circ \psi_x$ is a homeomorphism, it then follows that there exists the point $t_0:=(\theta\circ \psi_x)^{-1}(a_0)\in\RR$ for which $\RR\setminus\{t_0\}$ is connected, which is impossible. 
Therefore $J_x$ is an open interval. 
Moreover, the point $b_0:=\theta(x_0)\in\RR$ is independent of $x\in X$ and one has $b_0=\theta(\psi_x(0)\in J_{x_0}$ for every $x\in X\setminus\{x_0\}$. 

Now let $x_1,x_2\in X\setminus\{x_0\}$ arbitrary. 
Since $J_{x_1}$ and $J_{x_2}$ are open intervals in $\RR$ whose intersection contains the point $b_0$, it follows that  $(J_{x_1}\cap J_{x_2})\setminus\{b_0\}\ne\emptyset$. 
Then there exist $t_1,t_2\in\RR\setminus\{0\}$ 
with $\theta(\psi_{x_1}(t_1))=\theta(\psi_{x_2}(t_2))$, 
hence $\psi_{x_1}(t_1)=\psi_{x_2}(t_2)$, 
that is, $t_1\cdot x_1=t_2\cdot x_2$.  
On the other hand, it follows at once by Definition~\ref{R-space} that $x=1\cdot x\in\psi_x(1)$ for every $x\in X$. 
Hence, since $t_1\in\RR\setminus\{0\}$, 
we obtain $x_1=(t_1^{-1}t_2)\cdot x_2\in\psi_{x_2}(\RR)$ 
for all $x_1,x_2\in X\setminus\{x_0\}$. 
This shows that $\psi_x\colon\RR\to X$ is indeed surjective for every $x\in X$. 
\end{example}

\begin{definition}\label{solvspecl_space}
	\normalfont
	A topological space $X$ is a \emph{special solvable space} if 
	it is strongly solvable and  
	it has a solving series as in Definition~\ref{strongly_solvable_space} with the following additional properties: 
	\begin{enumerate}
		\item\label{solvspecl_space_item1} 
		$X$ has the structure of a special $\RR$-space 
		and $\Gamma_j:=V_j\setminus V_{j-1}\subseteq X$ is an $\RR$-subspace for $j=1,\dots,n$. 
		\item\label{solvspecl_space_item2} 
		$\Gamma_n$ is isomorphic as a special $\RR$-space to a finite-dim\-ensional vector space, whose origin is the distinguished point of $X$. 
		\item\label{solvspecl_space_item3} 
		For $j=1,\dots,n-1$ the points of $\Gamma_{j+1}$ are closed and separated in $X\setminus V_{j}$.  
		\item For $j=1,\dots,n$, $\Gamma_j$ is isomorphic as a special $\RR$-space to a semi-algebraic cone $C_j$
		in a finite-dimensional vector space. 
		In addition, $C_1$ is assumed to be a Zariski open set,  
		and the dimension of the corresponding ambient vector space is called the \emph{index of $X$} and is denoted by $\ind X$. 
		\item For $j=1,\dots,n$, there exists a homogeneous function $\varphi_j\colon X\to\RR$  
		such that $\varphi_j\vert_{\Gamma_1}$ is a polynomial function (via the isomorphism $\Gamma_1\simeq C_1$) 
		and 
		$$\Gamma_j=\{\gamma\in X\mid \varphi_j(\gamma)\ne0\text{ and }\varphi_i(\gamma)=0\text{ if }i<j\}.$$ 
	\end{enumerate}
We then say that a solving series with the above properties is a \emph{special solving series} of~$X$. 
The least integer $n\ge 1$ for which there exists 
a special solving series   
as above is called the \emph{length} of $X$ and is denoted $\lgth X$. 
\end{definition}

\begin{lemma}\label{dom}
	Let $X$ be any topological space and for $j=1,2$ let $V_j$ be any open subset of $X$ 
	that is homeomorphic to an open subset of $\RR^{r_j}$, where $r_j\ge1$ is some integer. 
	If $V_1\cap V_2\ne\emptyset$, then $r_1=r_2$. 
\end{lemma}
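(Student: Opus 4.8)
The plan is to prove this statement about the invariance of dimension using the classical theorem of Brouwer (invariance of domain). The key observation is that the hypotheses give us, for $j=1,2$, a homeomorphism $h_j\colon V_j\to U_j$ where $U_j\subseteq\RR^{r_j}$ is open, and the assumption $V_1\cap V_2\ne\emptyset$ allows us to compare the two dimensions on their common overlap.

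First I would pick a point $p\in V_1\cap V_2$ and consider the intersection $W:=V_1\cap V_2$, which is a nonempty open subset of $X$ containing $p$. Restricting each homeomorphism gives homeomorphisms $h_1|_W\colon W\to h_1(W)$ and $h_2|_W\colon W\to h_2(W)$, where $h_j(W)$ is an open subset of $U_j$, hence an open subset of $\RR^{r_j}$; in particular $h_j(W)$ is nonempty. Composing, the map $f:=h_2\circ(h_1|_W)^{-1}\colon h_1(W)\to h_2(W)$ is a homeomorphism between a nonempty open subset of $\RR^{r_1}$ and a nonempty open subset of $\RR^{r_2}$. The statement $r_1=r_2$ is then exactly the assertion that two Euclidean spaces of different dimensions cannot contain homeomorphic nonempty open subsets.

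To finish, suppose without loss of generality that $r_1\le r_2$, and view $\RR^{r_1}$ as the subspace $\RR^{r_1}\times\{0\}\subseteq\RR^{r_2}$. Then $h_1(W)$ becomes a subset of $\RR^{r_2}$ and the homeomorphism $f\colon h_1(W)\to h_2(W)\subseteq\RR^{r_2}$ can be regarded as a continuous injective map from a subset of $\RR^{r_2}$ into $\RR^{r_2}$. If $r_1<r_2$, then $h_1(W)$, viewed inside $\RR^{r_2}$, has empty interior, so it is not open in $\RR^{r_2}$; but by Brouwer's invariance of domain its image $f(h_1(W))=h_2(W)$ would have to be non-open in $\RR^{r_2}$, contradicting the fact that $h_2(W)$ is open in $\RR^{r_2}$. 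This forces $r_1=r_2$.

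The only real subtlety—and the step I would flag as the crux—is the application of invariance of domain, since that theorem is precisely what prevents an elementary argument. One must apply it in the ambient space of the larger dimension $\RR^{r_2}$ and exploit the fact that an open set in $\RR^{r_2}$ cannot be homeomorphic to a set with empty $\RR^{r_2}$-interior via a map that is the restriction of a continuous injection. Everything else is routine bookkeeping with restrictions of homeomorphisms and the observation that open subsets of open subsets of $\RR^{r_j}$ are again open in $\RR^{r_j}$; the hypothesis $V_1\cap V_2\ne\emptyset$ is used exactly to guarantee that the overlap $W$ is nonempty so that both $h_j(W)$ are genuinely nonempty open sets.
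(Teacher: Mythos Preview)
The paper does not actually prove this lemma; it simply cites \cite[Lemma 2.10]{BBL17}. Your argument is the standard invariance-of-domain approach and is correct in substance.

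One small point of precision in the step you yourself flag as the crux: invariance of domain, as usually stated, takes an \emph{open} subset $U\subseteq\RR^n$ and a continuous injection $g\colon U\to\RR^n$, and concludes that $g(U)$ is open. In your final paragraph you apply it to $f$ with domain $h_1(W)$, which is \emph{not} open in $\RR^{r_2}$ when $r_1<r_2$, so the hypothesis of the theorem is not met; the theorem does not directly say that non-open domains yield non-open images. The clean fix is to apply invariance of domain to the inverse map $f^{-1}\colon h_2(W)\to h_1(W)\subseteq\RR^{r_1}\times\{0\}\subseteq\RR^{r_2}$: since $h_2(W)$ is nonempty open in $\RR^{r_2}$ and $f^{-1}$ is a continuous injection into $\RR^{r_2}$, the theorem forces $h_1(W)=f^{-1}(h_2(W))$ to be open in $\RR^{r_2}$, which is impossible as it lies in a proper linear subspace. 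With that adjustment your proof is complete.
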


\begin{proof}
See \cite[Lemma 2.10]{BBL17}. 
\end{proof}

\begin{remark}\label{ind_invar}
	\normalfont
Assume the setting of Definition~\ref{solvspecl_space}. 
Since $\Gamma_1$ is open and dense in $X$, 
it follows by Lemma~\ref{dom} that $\ind X$ does not depend on the choice of the solving series of~$X$. 
\end{remark}

\begin{example}[length 1]\label{length1}
	\normalfont
	The special solvable spaces of length 1 are exactly the finite-dimensional real vector spaces, up to an isomorphism of special $\RR$-spaces. 

Indeed, it is clear that for every finite-dimensional real vector space $\Vc$ the solving series $\emptyset =V_0\subseteq V_1=\Vc$ has the properties from Definition~\ref{solvspecl_space}, with $\Gamma_1=C_1=\Vc$, 
and the role of $\varphi_1\colon\Vc\to\RR$ can be played by any nonzero constant function. 

Conversely, let $X$ be a special solvable space  
with $\lgth X=1$.  
Then $X$ is a special $\RR$-space for which there exists a solving series $\emptyset =V_0\subseteq V_1=X$ 
satisfying the conditions from Definition~\ref{solvspecl_space} for $n=1$. 
Then $X=\Gamma_1$,  and  
$\Gamma_1$ is isomorphic as a special $\RR$-space to a finite-dim\-ensional vector space by Definition~\ref{solvspecl_space}\eqref{solvspecl_space_item2}.
\end{example}

\begin{example}[length 2, index 1]\label{length2index1}
	\normalfont
	Let $X$ be a special solvable space  
	such that $\lgth X=2$ and $\ind X=1$.  
	Then $X$ is a special $\RR$-space for which there exists a solving series $\emptyset =V_0\subseteq V_1\subseteq V_2=X$ 
	satisfying the conditions from Definition~\ref{solvspecl_space} for $n=2$. 
	
	The set 
	$\Gamma_2:=X\setminus V_1$ is isomorphic as a special $\RR$-space to a finite-dim\-ensional vector space by Definition~\ref{solvspecl_space}\eqref{solvspecl_space_item2}. 
	On the other hand, the set $V_1:=\Gamma_1$ is isomorphic as a special $\RR$-space to a an open semi-algebraic cone $C_1$
	in $\RR$, since $\ind X=1$. 
	It is easily checked the only open cones in $\RR$ that are $\RR$-subspaces of $\RR$ are  
	$\RR$ and $\RR^\times$.  
	But one cannot have $C_1=\RR$ because for $x=0\in C_1$ one obtains a contradiction with Definition~\ref{R-space}\eqref{R-space_item3}. 
	
	Consequently any special solvable space $X$ 
	with $\lgth X=2$ and $\ind X=1$ is of the form $X=\Gamma_1\sqcup\Gamma_2$, where $\Gamma_2$ is a finite-dimensional real vector space $\Vc$ and $\Gamma_1=\RR^\times$. 
	Taking into account that $\Gamma_1$ is dense in $X$ (see Definition~\ref{strongly_solvable_space}), 
the topology of $X$ is such that $\Gamma_2$ is a closed subset of $X$ and is equal to the boundary of $\Gamma_1$ in~$X$. 
\end{example}

\section{Nilpotent Lie groups and the topology of their unitary duals}\label{section-nilpotent}
In this section, unless otherwise mentioned, we denote by $G$ an arbitrary connected simply connected nilpotent Lie group  with its corresponding Lie algebra $\gg$ having 
a fixed Jordan-H\"older sequence 
$$\{0\}=\gg_0\subseteq \gg_1\subseteq \cdots\subseteq\gg_{m-1} \subseteq\gg_m=\gg,$$
and we select $X_j\in\gg_j\setminus\gg_{j-1}$ for $j=1,\dots,m$. 
We denote by 
$$\langle\cdot,\cdot\rangle\colon\gg^*\times\gg\to\RR$$
the duality pairing between $\gg$ and its linear dual space~$\gg^*$, 
and for every subalgebra $\hg\subseteq\gg$ we define 
$$(\forall\xi\in\gg^*)\quad \hg(\xi):=\{X\in\hg\mid(\forall Y\in\hg)\  \langle\xi,[X,Y]\rangle=0\}, $$
and $ \hg^\perp= \{\xi \in \gg^*\mid \hg \subset \ker \xi\}$. 
We also denote by $\Ad_G^*$ the coadjoint action of $G$ in $\gg^*$.
There is a canonical map $q\colon \gg^* \to \gg^*/G$, $\xi \mapsto \Oc_\xi$, where 
$\Oc_\xi = \Ad^*_G(G)\xi$. 
The space $\gg^*/G$ of coadjoint orbits is endowed with its quotient topology.

Let $\Ec$ be the set of all subsets of $\{1,\dots,m\}$ 
endowed with the total ordering defined for all $e_1\ne e_2$ in $\Ec$ by 
$$e_1\prec e_2\iff\min(e_1\setminus e_2)<\min(e_2\setminus e_1)$$
where we use the convention $\min\emptyset=\infty$, so in particular $\max\Ec=\emptyset$.

We define the jump indices
$$(\forall \xi\in\gg^*)\quad J_\xi:=\{j\in\{1,\dots,m\}\mid \gg_j\not\subset\gg(\xi)+\gg_{j-1}\}$$
and 
$$(\forall e\in\Ec)\quad \Omega_e:=\{\xi\in\gg^*\mid J_\xi=e\}.$$
The \emph{coarse stratification of $\gg^*$} is the family $\{\Omega_e\}_{e\in\Ec}$, 
which is a finite partition of~$\gg^*$ consisting of $G$-invariant sets. 
For every coadjoint $G$-orbit $\Oc\in\gg^*/G$ we define $J_{\Oc}:=J_\xi$ for any $\xi\in\Oc$ and then 
\begin{equation}\label{coarse_eq1}
(\forall e\in\Ec)\quad \Xi_e:=\{\Oc\in\gg^*/G\mid J_{\Oc}=e\}.
\end{equation}

\begin{lemma}\label{coarse_lemma}
	Assume the above setting and for every $\Oc\in\gg^*/G$ pick any unitary irreducible representation 
	$\pi_{\Oc}\colon G\to \Bc(\Hc_{\Oc})$ which is associated with~$\Oc$ via Kirillov's correspondence. 
	If we endow the space $\gg^*/G\simeq\widehat{G}$ with its canonical topology, 
	then for every index set $e\in\Ec$ the following assertions hold: 
	\begin{enumerate}
		\item\label{coarse_lemma_item1} The relative topology of $\Xi_e\subseteq\gg^*/G$ is Hausdorff. 
		\item\label{coarse_lemma_item2} For every  $\phi\in \Cc_c^\infty(G)$ 
		the function 
		$$\Xi_e\to\CC,\quad \Oc\mapsto\Tr(\pi_{\Oc}(\phi)) $$
		is well defined and continuous. 
	\end{enumerate}
\end{lemma}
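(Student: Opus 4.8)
The plan is to reduce both statements to the explicit Pukanszky--Pedersen parametrization of the coadjoint orbits contained in a single coarse stratum. Fixing $e\in\Ec$, I would first record that $\card J_\xi=\dim\gg-\dim\gg(\xi)=\dim\Oc_\xi$ is constant on $\Omega_e$; writing $\card e=2d$, every orbit meeting $\Omega_e$ then has dimension $2d$. Working in the Jordan--H\"older coordinates $x_j(\xi):=\langle\xi,X_j\rangle$, I would construct a $G$-equivariant trivialization
$$\Phi_e\colon\RR^{2d}\times\Sigma_e\to\Omega_e,$$
in which $\Sigma_e\subseteq\Omega_e$ is the cross-section obtained by prescribing the jump coordinates $\{x_j:j\in e\}$, and the factor $\RR^{2d}$ is swept out by the coadjoint action in the jump directions. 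Accompanying this, one has the $G$-invariant rational functions $P_j$ ($j\notin e$), of the form $P_j=x_j+(\text{terms of lower index})$, which are defined and continuous on all of $\Omega_e$ and separate its $G$-orbits.

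For \eqref{coarse_lemma_item1}, I would observe that the functions $P_j$ with $j\notin e$ descend to continuous functions on $\Xi_e$ and, separating orbits, assemble into a continuous injection $\Xi_e\hookrightarrow\RR^{m-2d}$. Since any space that admits a continuous injection into a Hausdorff space is itself Hausdorff, this settles the claim. Equivalently, I would check that $q|_{\Sigma_e}\colon\Sigma_e\to\Xi_e$ is a homeomorphism, so that $\Xi_e$ inherits the Hausdorff property from the subspace $\Sigma_e\subseteq\gg^*$.

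For \eqref{coarse_lemma_item2}, well-definedness is immediate, as $\Tr(\pi_\Oc(\phi))$ is a unitary invariant and hence depends only on $\Oc$. For continuity I would use that $\pi_\Oc(\phi)$ is trace class for $\phi\in\Cc_c^\infty(G)$ and invoke the Kirillov character formula
$$\Tr(\pi_\Oc(\phi))=\int_{\Oc}\widehat{u}(\eta)\,\de\beta_\Oc(\eta),\qquad u:=\phi\circ\exp\in\Cc_c^\infty(\gg),$$
with $\widehat{u}$ the Euclidean Fourier transform of $u$ on $\gg^*$ and $\beta_\Oc$ the Liouville measure of $\Oc$. Transporting the integral through $\Phi_e$, the orbit is parametrized by $s\in\RR^{2d}$ while $\beta_\Oc$ becomes Lebesgue measure times a density $c(\sigma)$ depending continuously on the base point $\sigma\in\Sigma_e\simeq\Xi_e$, so that
$$\Tr(\pi_\Oc(\phi))=c(\sigma)\int_{\RR^{2d}}\widehat{u}\bigl(\Phi_e(s,\sigma)\bigr)\,\de s.$$
Since $u$ has compact support, $\widehat{u}$ is a Schwartz function and $\Phi_e$ is polynomial in $s$ and proper in the jump directions; the integrand is therefore dominated locally uniformly in $\sigma$, and dominated convergence yields continuity in $\sigma$, hence of $\Oc\mapsto\Tr(\pi_\Oc(\phi))$ on $\Xi_e$.

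The hard part will be the construction and verification of $\Phi_e$ together with the invariants $P_j$: beyond checking that $\Sigma_e$ meets each orbit of $\Omega_e$ in exactly one point, one must verify that $q|_{\Sigma_e}$ is a homeomorphism for the quotient topology and that the Liouville measure transports to Lebesgue measure with a continuous density $c$. These two facts are exactly what drive the separation argument for \eqref{coarse_lemma_item1} and the dominated-convergence argument for \eqref{coarse_lemma_item2}; once they are in place, the remainder is routine.
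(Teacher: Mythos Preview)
Your sketch is correct and follows exactly the Pukanszky--Pedersen machinery that underlies the cited result. Note, however, that the paper does not actually give its own proof of this lemma: it simply refers to \cite[Lemma~4.1]{BBL17}. So there is no ``paper's proof'' to compare against beyond that citation; what you have written is essentially a reconstruction of the argument behind the reference.

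Two small remarks on the details. First, in Pedersen's normalization of the chart $\Phi_e$ the Liouville measure transports to Lebesgue measure on $\RR^{2d}$ with density identically~$1$, so your $c(\sigma)$ is constant; your formulation is not wrong, just more general than needed. Second, the domination step can be made completely explicit: in Pedersen coordinates the jump coordinates of $\Phi_e(s,\sigma)$ are exactly $s$, so $\vert\Phi_e(s,\sigma)\vert\ge\vert s\vert$, and the Schwartz decay of $\widehat{u}$ gives an $L^1(\RR^{2d})$ majorant independent of $\sigma$ on compacta. With those two points recorded, the ``hard part'' you flag (that $q\vert_{\Sigma_e}$ is a homeomorphism and the measure transport) is precisely what is carried out in \cite{BBL17} and in the classical sources (Pukanszky, Pedersen, and \cite[Ch.~3]{CG90}); you have identified the right ingredients.
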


\begin{proof}
	See \cite[Lemma 4.1]{BBL17}.
\end{proof}

\begin{definition}
	The cardinal of the set $\{e\in\Ec\mid \Omega_e\ne\emptyset\}$ (depending on the fixed Jordan-H\"older sequence of $\gg$) is called the \emph{coarse length} of the nilpotent Lie group $G$ and we denote it by $\clgth(G)$. 
\end{definition}

\begin{theorem}\label{dual_spec}
For any connected simply connected nilpotent Lie group $G$, its unitary dual $\widehat{G}$ is a special solvable space 
and $\lgth\widehat{G}\le \clgth(G)$. 
\end{theorem}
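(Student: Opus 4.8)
The plan is to read off a special solving series for $\widehat{G}\simeq\gg^*/G$ directly from the coarse stratification $\{\Omega_e\}_{e\in\Ec}$ and its image $\{\Xi_e\}_{e\in\Ec}$, listing the nonempty strata in increasing order for the total order $\prec$ on $\Ec$, and then verifying the clauses of Definition~\ref{solvspecl_space} one at a time. First I would enumerate the nonempty index sets as $e_1\prec e_2\prec\cdots\prec e_n$, so that $n=\clgth(G)$ by the definition of the coarse length, and set $V_j:=\bigcup_{i\le j}\Xi_{e_i}$ and $\Gamma_j:=V_j\setminus V_{j-1}=\Xi_{e_j}$. Since $e\subsetneq e'$ forces $e'\prec e$, the order $\prec$ refines reverse inclusion and thus places the index sets of largest cardinality first; hence $\Gamma_1$ consists of the generic (maximal-dimensional) coadjoint orbits, while the $\prec$-maximal index set $\emptyset=\max\Ec$ yields $\Gamma_n=\Xi_\emptyset=[\gg,\gg]^\perp$, the vector space of characters. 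This already gives clause~\eqref{solvspecl_space_item2}, with the origin (the trivial character) as distinguished point.

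The crux of the argument is that each $V_j$ is open. To see this I would use that the jump-index conditions are \emph{polynomial}: membership of $\xi$ in $\Omega_e$ is governed by the vanishing and non-vanishing of the minors of the skew-symmetric structure matrix $M(\xi)=(\langle\xi,[X_i,X_j]\rangle)_{i,j}$, whose entries are homogeneous linear in $\xi$. These conditions are $G$-invariant, since $\Omega_e$ is $G$-invariant, and the key closure relation is
\[
\overline{\Omega_e}\subseteq\bigcup_{e'\succeq e}\Omega_{e'}\qquad(e\in\Ec),
\]
which expresses that degenerating a generic orbit can only decrease the jump-index set in the order $\prec$. Granting this, $\bigcup_{i\le j}\Omega_{e_i}$ is open and $G$-saturated in $\gg^*$, hence its image $V_j$ is open because the quotient map $q\colon\gg^*\to\gg^*/G$ is continuous, open, and surjective. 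The same polynomial description shows that $\Gamma_1$ is Zariski open and that $\Gamma_j$ is Zariski dense in the tail $X\setminus V_{j-1}=\bigcup_{i\ge j}\Xi_{e_i}$, which supplies the density required for strong solvability in Definition~\ref{strongly_solvable_space}.

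It then remains to equip this series with the special $\RR$-space data. The $\RR$-action $(t,\Oc_\xi)\mapsto\Oc_{t\xi}$ of Example~\ref{orbits} is available, and each $\Gamma_j$ is an $\RR$-subspace because $J_{t\xi}=J_\xi$ for $t\ne0$ (scaling $\xi$ rescales $M(\xi)$ and changes none of its rank conditions). For clause~\eqref{solvspecl_space_item3} I would invoke the $T_1$ property of $\widehat{G}$ restricted to the tails. The semi-algebraic cone identification in clause~(4) I would obtain from a rational cross-section to the coadjoint orbits within each layer (Pukanszky-type coordinates adapted to the jump indices $e_j$): this realizes $\Xi_{e_j}$ as a semi-algebraic cone $C_j$, with $C_1$ Zariski open by the previous paragraph. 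Finally, for clause~(5) I would take $\varphi_j$ to be the homogeneous polynomial in $\xi$ built from the $e_j$-indexed Pfaffian minor of $M(\xi)$, so that $\Gamma_j=\{\varphi_j\ne0,\ \varphi_i=0\ (i<j)\}$ reproduces exactly the jump-index description of the stratum.

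Assembling the pieces, $\widehat{G}$ is second countable and locally quasi-compact by Proposition~\ref{basic}\eqref{basic_2nd-countable},\,\eqref{basic_Baire}; it is $T_1$ (hence in particular $T_0$) because $C^*(G)$ is liminal; each $\Gamma_j=\Xi_{e_j}$ is Hausdorff by Lemma~\ref{coarse_lemma}\eqref{coarse_lemma_item1}; and the density clause has been verified. Thus the constructed series is a special solving series of length $n$, and since $\lgth\widehat{G}$ is the least length of such a series, this yields $\lgth\widehat{G}\le n=\clgth(G)$. I expect the main obstacle to lie in the two genuinely orbit-theoretic inputs rather than in the formal bookkeeping, namely the closure relation $\overline{\Omega_e}\subseteq\bigcup_{e'\succeq e}\Omega_{e'}$ (equivalently, the openness of the layers $V_j$) and the construction of the rational cross-sections realizing each $\Xi_{e_j}$ as a semi-algebraic cone together with its defining Pfaffian polynomial $\varphi_j$; the remaining verifications follow from the order $\prec$ and the general topological facts already recorded.
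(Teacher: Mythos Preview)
The paper's own proof is a one-line citation to \cite[Th.~4.11]{BBL17}, and your outline is precisely the strategy carried out there: list the nonempty coarse strata in the order~$\prec$, take $V_j=\bigcup_{i\le j}\Xi_{e_i}$, and verify the clauses of Definition~\ref{solvspecl_space} using the semi-algebraic description of the $\Omega_e$ and Pedersen--Pukanszky cross-sections. So the approach is the right one and matches the reference.

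Two points in your sketch need more than what you wrote. First, for clause~\eqref{solvspecl_space_item3} you invoke ``the $T_1$ property of $\widehat{G}$ restricted to the tails,'' but separation in the sense of Appendix~\ref{AppA} is strictly stronger than $T_1$: it asks that each point of $\Gamma_{j+1}$ can be Hausdorff-separated from \emph{every} point of $X\setminus V_j$, including points in deeper strata $\Gamma_k$ with $k>j+1$. Liminality of $C^*(G)$ gives only $T_1$; the separation input comes from the finer fact that the successive subquotients have continuous trace (cf.\ Lemma~\ref{coarse_lemma}\eqref{coarse_lemma_item2} and its use in Lemma~\ref{dimn_lemma}), or equivalently from an orbit-method argument showing that a net in $X\setminus V_j$ converging to a point of $\Gamma_{j+1}$ cannot simultaneously converge elsewhere. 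Second, in clause~(5) the function $\varphi_j$ is required to live on $X=\gg^*/G$, not on $\gg^*$; the Pfaffian minor you describe is a polynomial in $\xi$ and is not $G$-invariant in general, so it does not descend directly. The fix is the one implicit in your clause~(4): transport the Pfaffian through the cross-section identification $\Gamma_j\simeq C_j$, which is exactly how \cite{BBL17} produces the homogeneous $\varphi_j$ on the orbit space. With these two points supplied, your argument goes through and yields $\lgth\widehat{G}\le n=\clgth(G)$ as claimed.
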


\begin{proof}
This follows by \cite[Th. 4.11]{BBL17}. 
\end{proof}

\begin{example}\label{filif}
	\normalfont
Let us assume that $m:=\dim\gg\ge 3$ and that the nilpotent Lie algebra $\gg$ has a basis $X_1,\dots,X_m$ with the commutation relations $$[X_m,X_j]=X_{j-1} \quad \text{for} \quad j=1,\dots,m-1, $$
where $X_0:=0$, 
and $[X_k,X_j]=0$ if $1\le j\le k\le m-1$.  
Defining $\gg_j:=\spa\{X_1,\dots,X_j\}$ for $j=1,\dots,m$, 
we obtain a Jordan-H\"older sequence in $\gg$, and 
we will show that $\clgth(G)=m-1$ for this Jordan-H\"older sequence.

To this end we first note that $[\gg,\gg]=\spa\{X_j\mid 1\le j\le m-2\}$ hence 
\begin{equation}
\label{filif_eq1}
[\gg,\gg]^\perp=\{\xi\in\gg^*\mid \langle\xi,X_j\rangle=0\text{ for }j=1,\dots,m-2\}.
\end{equation}
Now let $\xi\in\gg^*\setminus[\gg,\gg]^\perp$ and define  $j_1\in\{2,\dots,m\}$ by the conditions $$\langle\xi,X_{j_1-1}\rangle\ne0=\langle\xi,X_j\rangle\quad \text{for  every } \; j<j_1-1.$$
We check that $J_\xi=\{j_1,m\}$. 

One has $X_j\in\gg(\xi)$ for $j\in\{1,\dots,j_1-1\}$ since  $[X_m,X_j]=X_{j-1}\in\Ker\xi$ 
and  $[X_k,X_j]=0\in\Ker\xi$ for all $k\in\{1,\dots,m-1\}$. 
Therefore $\gg_{j_1-1}\subseteq\gg(\xi)$. 

Since $\xi\not\in[\gg,\gg]^\perp$, one has $j_1\le m-1$ by \eqref{filif_eq1}. 
Then $[X_m,X_{j_1}]=X_{j_1-1}$, hence $\langle\xi,[X_m,X_{j_1}]\rangle\ne 0$. 
This implies $\gg(\xi)\cap\spa\{X_m,X_{j_1}\}=\{0\}$, 
and then it is straightforward to check that $j_1,m\in J_\xi$. 

On the other hand, it is well known that  $\vert J_\xi\vert =\dim\Oc_\xi=2$. (See for instance \cite[Prop.~11]{AM10} for a direct proof of this fact.)
Therefore $J_\xi=\{j_1, m\}$. 
This shows that  
$$\Ec=\{\{2, m\}, \dots, \{m-2, m\}, \emptyset\}, $$
hence $\clgth(G)=m-1$. 
\end{example}

The following lemma, interesting on its own, plays also a key role in the proof of Proposition~\ref{maximal}.

\begin{lemma}\label{quot}
Let $G$ be a connected simply connected nilpotent Lie group with its corresponding Lie algebra $\gg$. 
If $q\colon\gg^*\to\gg^*/G$, $\xi\mapsto\Oc_{\xi}$, 
is the quotient map onto the set of coadjoint orbits, 
then the following assertions hold: 
\begin{enumerate} 
\item 
The set of characters $[\gg,\gg]^\perp\subseteq\gg^*$, regarded as the set of all singleton coadjoint orbits, is a closed subset of $\widehat{G}$ whose complement is either empty (if $G$ is commutative) 
	or dense in $\widehat{G}$ (if $G$ is noncommutative). 
\item For any dense open set $D\subseteq\gg^*/G$, the set $q^{-1}(D)\subseteq\gg^*$ is also open and dense.  
\end{enumerate}
\end{lemma}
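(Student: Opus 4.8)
The plan is to rely on the standard fact, already used in the proof of Corollary~\ref{cont-open-cor3}, that the orbit map $q\colon\gg^*\to\gg^*/G$ is continuous, open, and surjective, and then feed this into Lemma~\ref{cont-open}. First I would record the identification underlying both assertions: for $\xi\in\gg^*$ the coadjoint orbit $\Oc_\xi$ is a singleton if and only if $\xi$ is a fixed point of the coadjoint action, which (for the connected group $G$) happens if and only if $\gg(\xi)=\gg$, i.e. $\langle\xi,[X,Y]\rangle=0$ for all $X,Y\in\gg$, that is, $\xi\in[\gg,\gg]^\perp$. Consequently every coadjoint orbit meeting $[\gg,\gg]^\perp$ is a one-point orbit contained in $[\gg,\gg]^\perp$, so $[\gg,\gg]^\perp$ is a saturated set: $q^{-1}(q([\gg,\gg]^\perp))=[\gg,\gg]^\perp$. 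In particular $q$ restricts to a bijection of $[\gg,\gg]^\perp$ onto the set of singleton orbits, which justifies identifying the two.

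\textbf{Assertion (1).} Closedness is then immediate: $[\gg,\gg]^\perp$ is a linear subspace of $\gg^*$, hence closed, and being saturated its image under $q$ is closed in $\gg^*/G\simeq\widehat G$ by the definition of the quotient topology. For the complement, the commutative case is trivial since there $[\gg,\gg]=\{0\}$ forces $[\gg,\gg]^\perp=\gg^*$. If $G$ is noncommutative, then $[\gg,\gg]^\perp$ is a proper linear subspace of $\gg^*$, so its complement $\gg^*\setminus[\gg,\gg]^\perp$ is dense in $\gg^*$. Writing $S:=(\gg^*/G)\setminus q([\gg,\gg]^\perp)$ for the complement of the characters, saturation gives $q^{-1}(S)=\gg^*\setminus[\gg,\gg]^\perp$; applying Lemma~\ref{cont-open} to $q$ with $Z=S$ then yields $\overline{q^{-1}(S)}=q^{-1}(\overline S)$, and since the left-hand side is all of $\gg^*$, surjectivity of $q$ forces $\overline S=\gg^*/G$, i.e. $S$ is dense in $\widehat G$.

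\textbf{Assertion (2).} This is even shorter. Openness of $q^{-1}(D)$ is automatic from continuity of $q$. For density I would apply Lemma~\ref{cont-open} directly with $Z=D$: since $D$ is dense, $\overline D=\gg^*/G$, and the lemma gives $\overline{q^{-1}(D)}=q^{-1}(\overline D)=q^{-1}(\gg^*/G)=\gg^*$, so that $q^{-1}(D)$ is dense in $\gg^*$.

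\textbf{Main obstacle.} The only genuinely non-formal input is the openness of the orbit map $q$, which is what makes the density statements go through in both directions; the identity $\overline{\varphi^{-1}(Z)}=\varphi^{-1}(\overline Z)$ of Lemma~\ref{cont-open} fails for merely continuous surjections, so the point to keep in mind is that it is precisely the openness of $q$ (a standard property of quotient maps by group actions) that transports density between $\gg^*$ and $\gg^*/G$. Everything else reduces to the elementary observation that $[\gg,\gg]^\perp$ is a proper closed saturated subspace together with routine bookkeeping with saturated sets.
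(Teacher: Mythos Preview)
Your proof is correct and follows essentially the same approach as the paper: both rely on the quotient map $q$ being continuous, open, and surjective, and both identify the characters with the saturated set $[\gg,\gg]^\perp$. The only cosmetic difference is that you route the density arguments through Lemma~\ref{cont-open}, whereas the paper argues directly that an open continuous surjection pushes dense open sets forward to dense open sets and pulls them back as well; the mathematical content is identical.
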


\begin{proof}
The quotient map $q\colon \gg^*\to\gg^*/G$ is a continuous open surjective map, hence it takes every open dense subset of $\gg^*$ onto an open dense subset of $\gg^*/G$. 
Now the first assertion follows since a straightforward reasoning of linear algebra shows that the set $\{\xi\in\gg^*\mid [\gg,\gg]\not\subset\Ker\xi\}$ either is empty or is a dense open subset of $\gg^*$, and the image of this set in $\gg^*/G$ is exactly the set of all coadjoint orbits that are not singleton sets.

The second assertion is a direct consequence of the fact that the map $q$ is open, continuous, and surjective. 
\end{proof}

\begin{proposition}\label{maximal}
	Let $G$ be a connected simply connected nilpotent Lie group with its corresponding Lie algebra $\gg$.  
	Regard the set of characters $[\gg,\gg]^\perp\subseteq\gg^*$ as a subset of  $\widehat{G}$. 
	If $S$ is a closed subset of $\widehat{G}$ 
	with $[\gg,\gg]^\perp\subseteq S\subseteq\widehat{G}$ and the relative topology of $S$ is Hausdorff, then $[\gg,\gg]^\perp$ is a closed-open subset of $S$. 
	
	In particular, if $S$ is moreover a connected subset of $\widehat{G}$ (e.g., an $\RR$-subspace), then $S=[\gg,\gg]^\perp$. Thus $[\gg,\gg]^\perp$ is a maximal connected closed subset of $\widehat{G}$ whose relative topology is Hausdorff. 
\end{proposition}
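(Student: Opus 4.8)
The plan is to establish the two halves of the clopen assertion separately and then read off the remaining claims. Closedness of $[\gg,\gg]^\perp$ in $S$ is immediate: by Lemma~\ref{quot} the set $[\gg,\gg]^\perp$ is closed in $\widehat{G}$, hence closed in the subspace $S$. All the work is in showing that $[\gg,\gg]^\perp$ is \emph{open} in $S$, equivalently that no character lies in the $\widehat{G}$-closure of the non-character part $S\setminus[\gg,\gg]^\perp$. Once this is done, the ``in particular'' follows because a nonempty clopen subset of a connected space is the whole space, and every $\RR$-subspace containing $x_0$ is connected, being pathwise connected through $x_0$ by Remark~\ref{R-space_rem}. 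Finally, maximality follows since $[\gg,\gg]^\perp$ is itself connected (it is an $\RR$-subspace, hence pathwise connected by Remark~\ref{R-space_rem}), closed (Lemma~\ref{quot}) and Hausdorff (it is the stratum $\Xi_\emptyset=\{\Oc\mid J_\Oc=\emptyset\}$, which is Hausdorff by Lemma~\ref{coarse_lemma}), while the clopen statement shows it cannot sit strictly inside any connected closed Hausdorff subset.

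For the openness I would argue by contradiction, exploiting that a Hausdorff space has unique net limits. Assume some character $\chi_0\in[\gg,\gg]^\perp$ is a limit of a net of non-character orbits $\Oc_\alpha\in S$. Since $q\colon\gg^*\to\gg^*/G$ is continuous, open and surjective and $q^{-1}(\chi_0)=\{\chi_0\}$, after passing to a subnet I may lift to representatives $\xi_\alpha\in\Oc_\alpha$ with $\xi_\alpha\to\chi_0$ in $\gg^*$. Because being a character is a coadjoint invariant, each $\Oc_\alpha$ lies entirely outside $[\gg,\gg]^\perp$, so $\beta_\alpha:=\xi_\alpha|_{[\gg,\gg]}\ne0$ while $\beta_\alpha\to0$. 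The idea is now to travel along the positive-dimensional orbit $\Oc_\alpha$, by coadjoint motions, to a \emph{second} family of representatives $\eta_\alpha\in\Oc_\alpha$ converging to a \emph{different} character $\chi_1\in[\gg,\gg]^\perp$. Granting this, continuity of $q$ gives $\Oc_\alpha=q(\xi_\alpha)\to\chi_0$ and $\Oc_\alpha=q(\eta_\alpha)\to\chi_1$ for the one and the same net, with $\chi_0\ne\chi_1$ and both in $[\gg,\gg]^\perp\subseteq S$, contradicting the Hausdorffness of $S$.

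To produce $\eta_\alpha$, fix a vector-space splitting $\gg=\mathfrak{v}\oplus[\gg,\gg]$ and expand the character-direction displacement of a coadjoint move $\Ad_G^*(\exp W)\xi_\alpha$ on $\mathfrak{v}$: its leading term on $v\in\mathfrak{v}$ is $-\langle\xi_\alpha,[W,v]\rangle=-\langle\beta_\alpha,[W,v]\rangle$. Normalizing so that $\beta_\alpha/\Vert\beta_\alpha\Vert\to\beta_\infty\ne0$ along a subnet and choosing a fixed $W_0$ with $\langle\beta_\infty,[W_0,\,\cdot\,]\rangle\not\equiv0$ on $\mathfrak{v}$, I set $\eta_\alpha:=\Ad_G^*(\exp(\Vert\beta_\alpha\Vert^{-1}W_0))\xi_\alpha$. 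When $\gg$ is two-step nilpotent this works verbatim: the $\mathfrak{v}$-component converges to $\chi_0|_{\mathfrak{v}}$ plus the nonzero functional $c:=-\langle\beta_\infty,[W_0,\,\cdot\,]\rangle$, while on $[\gg,\gg]$ one has $\Ad_G(\exp(-\Vert\beta_\alpha\Vert^{-1}W_0))Y=Y$, so $\eta_\alpha|_{[\gg,\gg]}=\beta_\alpha\to0$; hence $\eta_\alpha\to\chi_1:=\chi_0+c\ne\chi_0$, a genuine character. The main obstacle is precisely the passage to higher nilpotency class: for $W_\alpha=\Vert\beta_\alpha\Vert^{-1}W_0$ the higher terms $\tfrac{1}{k!}\langle\xi_\alpha,\ad(W_0)^kv\rangle$ scale like $\Vert\beta_\alpha\Vert^{1-k}$ and blow up for $k\ge2$. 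To control them I would not scale a single vector but distribute the exponents along the lower central series $\gg\supseteq[\gg,\gg]\supseteq[\gg,[\gg,\gg]]\supseteq\cdots$, peeling off one graded layer at a time; this is exactly the place where one must run the argument inside the coarse stratification of Lemma~\ref{coarse_lemma} and the Jordan--H\"older coordinates of \cite{BBL17}, and this filtration bookkeeping is the genuinely technical heart of the proof.
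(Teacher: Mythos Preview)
The paper does not actually prove this proposition here; the entire argument is deferred to \cite{BB17c}. There is therefore no in-paper proof to compare against, and I can only assess your proposal on its own merits.

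Your overall architecture is sound. Closedness of $[\gg,\gg]^\perp$ in $S$ follows from Lemma~\ref{quot}; the ``in particular'' and the maximality statement follow from the clopen claim exactly as you indicate; and the right mechanism for openness is indeed to show that a net $(\Oc_\alpha)\subseteq S\setminus[\gg,\gg]^\perp$ converging to a character $\chi_0$ must simultaneously converge to a second character $\chi_1\ne\chi_0$, contradicting Hausdorffness of $S$. The lifting step via the open surjection $q$ is correct, and your two-step computation is clean and goes through as written.

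The gap is exactly where you locate it, and you are candid about not closing it. For nilpotency class at least three the single rescaled vector $W_\alpha=\Vert\beta_\alpha\Vert^{-1}W_0$ does make the higher $\ad$-terms diverge, and ``distribute the exponents along the lower central series'' is a program rather than a proof: you have not specified which group elements to use at each layer, how to tune their scales compatibly, nor why the resulting $\eta_\alpha$ stays bounded on $[\gg,\gg]$ while effecting a definite nonzero displacement on $\mathfrak v$. Invoking the coarse stratification of Lemma~\ref{coarse_lemma} and the Jordan--H\"older coordinates of \cite{BBL17} does not by itself supply these choices---Lemma~\ref{coarse_lemma} gives Hausdorffness of each stratum and continuity of traces there, not quantitative control of orbit geometry near the character stratum. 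As it stands, your proposal is a complete argument in the two-step case together with a plausible but unexecuted outline beyond it; the ``technical heart'' you name is precisely what remains to be supplied.
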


\begin{proof}
See \cite{BB17c}. 
\end{proof}

\begin{remark} 
	\normalfont
	To put Proposition~\ref{maximal} into perspective, 
	let us make the following elementary remarks. 
Let $X$ be a topological space. 
For any connected subset $S\subseteq X$ its closure $\overline{S}\subseteq X$ is also connected. 
This implies that any maximal connected subset of $X$ is closed, 
and the maximal connected subsets of $X$ are actually its connected components. 

However, if the relative topology of $S\subseteq X$ is Hausdorff, then the relative topology of its closure $\overline{S}\subseteq X$ need not be Hausorff. 
This shows that in general a topological space may not have any subset which is a maximal element of the set $\Sc_X$ of all closed connected Hausdorff subsets. 
We also note that this set $\Sc_X$ may not be inductively ordered with respect to the inclusion, hence Zorn's lemma cannot be used to obtain maximal elements of $\Sc_X$. 
\end{remark}

One has the following strengthening of \cite[Prop. 5.1]{BBL17}, which shows that the Heisenberg groups can be singled out from among the nilpotent Lie groups using only 
the $\RR$-space structures of their unitary dual spaces.  

\begin{proposition}\label{1H}
If $G$ is a connected simply connected nilpotent Lie group, then the following assertions are equivalent: 
\begin{enumerate}
	\item One has $\ind\widehat{G}=1$ and $\lgth\widehat{G}=2$.
	\item The Lie group $G$ is a Heisenberg group.  
\end{enumerate}
\end{proposition}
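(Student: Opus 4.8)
The plan is to prove the two implications separately. For $(2)\Rightarrow(1)$ I would invoke (and briefly recover) \cite[Prop. 5.1]{BBL17}: for a Heisenberg group one has $[\gg,\gg]=\zg(\gg)=\RR Z$ one-dimensional, the non-character coadjoint orbits are exactly the affine hyperplanes $H_\lambda:=\{\xi\mid\langle\xi,Z\rangle=\lambda\}$ with $\lambda\in\RR^\times$, and the characters form $[\gg,\gg]^\perp$. Thus $\emptyset\subseteq\{\xi\mid\langle\xi,Z\rangle\neq0\}/G\subseteq\widehat{G}$ is a special solving series with $\Gamma_1\cong\RR^\times$ (a Zariski-open cone in $\RR$, so $\ind\widehat{G}=1$) and $\Gamma_2\cong[\gg,\gg]^\perp$ a vector space; the length is not $1$ because by Example~\ref{length1} that would make $\widehat{G}$ a Hausdorff vector space, whereas Lemma~\ref{quot} and Proposition~\ref{maximal} show the characters are a proper closed subset of the connected space $\widehat{G}$. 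Hence $\lgth\widehat{G}=2$.

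For the substantial implication $(1)\Rightarrow(2)$ I would first note $G$ is noncommutative, since otherwise $\widehat{G}\cong\gg^*$ is a vector space and $\lgth\widehat{G}=1$ by Example~\ref{length1}. By Example~\ref{length2index1}, the hypotheses give $\widehat{G}=\Gamma_1\sqcup\Gamma_2$ with $\Gamma_1\cong\RR^\times$ open and dense and $\Gamma_2\cong\Vc$ a finite-dimensional vector space, closed. As $\Gamma_1$ is one-dimensional and is the orbit space of the open dense $G$-invariant set $q^{-1}(\Gamma_1)\subseteq\gg^*$, on which the coadjoint orbits have the maximal dimension $\dim\Oc_{\mathrm{gen}}$, I get $\dim\gg-\dim\Oc_{\mathrm{gen}}=1$, hence $\dim\gg(\xi)=1$ for generic $\xi$ by Kirillov's dimension formula. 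Since $\zg(\gg)\subseteq\gg(\xi)$ always and $\zg(\gg)\neq\{0\}$ by nilpotency, this forces $\gg(\xi)=\zg(\gg)$ and $\dim\zg(\gg)=1$; write $\zg(\gg)=\RR Z$.

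The geometric heart of the argument is to identify the generic orbits exactly, and I expect this to be the main obstacle. The central value $\beta\colon\widehat{G}\to\RR$, $\Oc_\xi\mapsto\langle\xi,Z\rangle$, is well defined ($Z$ being central) and homogeneous of degree $1$, so under the isomorphism $\Gamma_1\cong\RR^\times$ of special $\RR$-spaces it becomes a linear map $s\mapsto cs$; here $c\neq0$, for otherwise $\langle\xi,Z\rangle$ would vanish on the dense set $q^{-1}(\Gamma_1)$, forcing $Z=0$. Thus $\beta$ restricts to a bijection of $\Gamma_1$ onto $\RR^\times$: each $\lambda\in\RR^\times$ is the central value of exactly one generic orbit. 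Fixing $\lambda\neq0$, the affine hyperplane $H_\lambda$ lies in $q^{-1}(\Gamma_1)$ and contains a unique generic orbit $\Oc$; since $\dim\Oc=\dim\gg-1=\dim H_\lambda$ and coadjoint orbits of nilpotent groups are closed embedded submanifolds, $\Oc$ is open and closed in the connected set $H_\lambda$, so $\Oc=H_\lambda$. Consequently the generic stratum is precisely $\{\xi\mid\langle\xi,Z\rangle\neq0\}$ and $q^{-1}(\Gamma_2)=\{\xi\mid\langle\xi,Z\rangle=0\}$.

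It then remains to pin down $[\gg,\gg]$. First $Z\in[\gg,\gg]$: otherwise one could choose $\xi$ vanishing on $[\gg,\gg]$ with $\langle\xi,Z\rangle\neq0$, which would be generic yet have Kirillov form $\langle\xi,[\cdot,\cdot]\rangle\equiv0$ and hence a singleton orbit, contradicting $\dim\Oc_{\mathrm{gen}}=\dim\gg-1>0$. Since $\RR Z=\zg(\gg)\subseteq[\gg,\gg]$ is a central ideal, the quotient $\overline{G}:=G/\exp(\RR Z)$ has Lie algebra $\overline{\gg}=\gg/\RR Z$, and the $G$-equivariant identification $\{\xi\mid\langle\xi,Z\rangle=0\}\cong\overline{\gg}^*$ together with the openness of $q$ yields a homeomorphism $\Gamma_2\cong\overline{\gg}^*/\overline{G}\cong\widehat{\overline{G}}$. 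As $\Gamma_2\cong\Vc$ is a connected Hausdorff vector space, so is $\widehat{\overline{G}}$, and applying Proposition~\ref{maximal} to $S=\widehat{\overline{G}}$ forces $[\overline{\gg},\overline{\gg}]^\perp=\widehat{\overline{G}}$, i.e.\ $\overline{G}$ is commutative, which means $[\gg,\gg]\subseteq\RR Z$. Combined with $\RR Z\subseteq[\gg,\gg]$ this gives $[\gg,\gg]=\zg(\gg)=\RR Z$. Finally, $\gg$ is then two-step nilpotent with one-dimensional centre equal to its commutator, and the alternating form $\omega$ on $\gg/\RR Z$ defined by $[X,Y]=\omega(X,Y)Z$ has radical $\zg(\gg)/\RR Z=\{0\}$, hence is non-degenerate; therefore $\gg$ is a Heisenberg Lie algebra and $G$ a Heisenberg group, completing the proof.
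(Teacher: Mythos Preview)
The paper does not supply a self-contained proof here; it simply refers to \cite{BB17c}. Your argument is therefore more than the paper offers, and its overall architecture is sound: the direction $(2)\Rightarrow(1)$ is routine, and for $(1)\Rightarrow(2)$ the deduction $\dim\zg(\gg)=1$ from $\ind G=1$ via Proposition~\ref{ind_prop}, the use of the central-value map $\beta$, and the quotient-by-centre step invoking Proposition~\ref{maximal} to force $[\gg,\gg]=\RR Z$ are all correct and well chosen.

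There is one soft spot that you should tighten. You assert that ``the coadjoint orbits \emph{in $q^{-1}(\Gamma_1)$} have the maximal dimension $\dim\Oc_{\mathrm{gen}}$'' and later use $\dim\Oc=\dim\gg-1$ for the unique $\Gamma_1$-orbit with central value~$\lambda$. Nothing in Definition~\ref{solvspecl_space} guarantees a priori that an arbitrary special solving series of length~$2$ has $\Gamma_1$ equal to the set of maximal-dimensional orbits, so ``orbit in $\Gamma_1$'' and ``generic orbit'' are being conflated without justification. The repair is short: the set where $\mathrm{rank}\,B_\xi$ is maximal is Zariski-open and dense, hence meets $q^{-1}(\Gamma_1)$; pick one generic $\Oc'\in\Gamma_1$, so $\beta(\Oc')=\lambda'\in\RR^\times$ (since you already showed $c\ne0$), and your open--closed--connected argument gives $\Oc'=H_{\lambda'}$. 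For any $\lambda\ne0$, the $\RR^\times$-equivariance of the coadjoint action yields $H_\lambda=(\lambda/\lambda')\cdot\Oc'$, again a single orbit, and it lies in $\Gamma_1$ because $\Gamma_1$ is an $\RR$-subspace. As $\beta\vert_{\Gamma_1}$ is a bijection onto $\RR^\times$, this exhausts $\Gamma_1$, and now $q^{-1}(\Gamma_1)=\{\xi:\langle\xi,Z\rangle\ne0\}$ as you claim. With this adjustment the proof is complete.
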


\begin{proof}
See \cite{BB17c}. 
\end{proof}

\section{Special topological properties of the unitary duals of Heisenberg groups}\label{section5}
In this section we establish some special properties of the quasi-compact subsets of the unitary duals of Heisenberg groups. 
We recall that if a $C^*$-algebra $\Ac$ is separable type~I, then the quasi-compact subsets of its spectrum $\widehat{\Ac}$ might not be Borel. (See \cite[4.7.10]{Dix64}.)

We need the following generalization of the well-known fact that the compact subsets of Hausdorff spaces are closed subsets. 

\begin{lemma}\label{qc1}
Let $X$ be a topological space having the property~$T_1$. 
If $X_0\subseteq X$ is a subset whose points are separated in $X$
(in particular, the relative topology of $X_0$ is Hausdorff), then for every quasi-compact subset $C\subseteq X$ the set $C\cap X_0$ is relatively closed in $X_0$. 
\end{lemma}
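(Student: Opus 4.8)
The plan is to adapt the classical argument that quasi-compact subsets of a Hausdorff space are closed, with ``Hausdorff'' replaced by the separation hypothesis on $X_0$. Concretely, I would prove that the relative complement $X_0\setminus(C\cap X_0)=X_0\setminus C$ is relatively open in $X_0$, which is equivalent to the asserted relative closedness of $C\cap X_0$.

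First I would fix a point $x\in X_0\setminus C$ and separate it from every point of $C$. Since $x\in X_0$ and $x\neq c$ for each $c\in C$ (because $x\notin C$), the hypothesis that the points of $X_0$ are separated in $X$ provides, for every $c\in C$, disjoint open subsets $U_c\ni x$ and $V_c\ni c$ of $X$. The family $\{V_c\}_{c\in C}$ is then an open cover of $C$, and by quasi-compactness I may extract a finite subcover $V_{c_1},\dots,V_{c_n}$. Setting $W:=\bigcap_{i=1}^n U_{c_i}$ yields an open neighborhood of $x$; as $W\subseteq U_{c_i}$ is disjoint from $V_{c_i}$ for each $i$, it is disjoint from $\bigcup_{i=1}^n V_{c_i}\supseteq C$, so $W\cap C=\emptyset$. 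Hence $W\cap X_0$ is a relatively open neighborhood of $x$ in $X_0$ contained in $X_0\setminus C$. Since $x$ was arbitrary, $X_0\setminus C$ is relatively open, and therefore $C\cap X_0$ is relatively closed in $X_0$.

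The one delicate point --- and the only place where the full strength of the hypothesis intervenes --- is the very first step: the fixed point $x\in X_0$ must be separated by disjoint neighborhoods from an arbitrary $c\in C$, and such a $c$ need \emph{not} lie in $X_0$. Thus ``the points of $X_0$ are separated in $X$'' has to be read as: each point of $X_0$ can be separated by disjoint open neighborhoods from \emph{any} other point of $X$, not merely from the other points of $X_0$. This is exactly what is needed to build the open cover $\{V_c\}_{c\in C}$ of the whole of $C$ together with a shrinking neighborhood $W$ of $x$.

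It is worth stressing why this ambient separation is indispensable. With only pairwise separation inside $X_0$ the statement fails: take $X=\{a_m\}_{m\ge1}\cup\{x,c\}$ with each $a_m$ isolated and both $x$ and $c$ serving as limits of the sequence $(a_m)$ (so $X$ is $T_1$ but not Hausdorff), put $X_0=\{a_m\}_{m\ge1}\cup\{x\}$, whose points are pairwise separated in $X$, and let $C=\{a_m\}_{m\ge1}\cup\{c\}$, which is quasi-compact. Then $C\cap X_0=\{a_m\}_{m\ge1}$ has $x$ in its relative closure in $X_0$, so it is not relatively closed. This example shows that the hypothesis must separate points of $X_0$ from points outside $X_0$ as well; once that is granted, the finite-subcover argument above goes through routinely.
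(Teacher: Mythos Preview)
Your proof is correct and follows essentially the same approach as the paper: both arguments separate a fixed point $x_0\in X_0\setminus C$ from each $c\in C$ via the separation hypothesis (using $T_1$ to ensure $X\setminus\overline{\{x_0\}}=X\setminus\{x_0\}$), pass to a finite subcover by quasi-compactness, and intersect the resulting finitely many neighborhoods of $x_0$ to get an open set missing $C$. The only cosmetic difference is that the paper phrases this as a proof by contradiction, while you argue directly that $X_0\setminus C$ is relatively open; your additional discussion of why the ambient separation (from all points of $X$, not just those in $X_0$) is essential is accurate and goes beyond what the paper records.
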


\begin{proof}
We  argue by contradiction. 
If $C\cap X_0$ is not relatively closed in $X_0$, 
then there exists a point $x_0\in X_0\setminus (C\cap X_0)=X_0\setminus C$ such that $x_0$ belongs to the relative closure of $C\cap X_0$ in $X_0$.

For every $c\in C$ one has $x_0\ne c$ hence, 
by the hypothesis that $X$ has the property~$T_1$ and the point $x_0\in X_0$ is separated in $X$, one obtains some 
open subsets $V_c,W_c\subseteq X$ with $V_c\cap W_c=\emptyset$, $x_0\in V_c$ and $c\in W_c$.

In particular $C\subseteq\bigcup\limits_{c\in C}W_c$ hence, 
since $C$ is quasi-compact, there exists a finite subset $C_0\subseteq C$ with $C\subseteq\bigcup\limits_{c\in C_0}W_c$. 
We then define $V:=\bigcap\limits_{c\in C_0}V_c$, 
which is the intersection of finitely many open subsets of $X$ 
containing $x_0$, hence $V$ is in turn an open subset of $X$ with $x_0\in V$. 
Moreover, 
$V\cap\bigcup\limits_{c\in C_0}W_c=\emptyset$ and $x_0\not\in\bigcup\limits_{c\in C_0}W_c$, hence  $V\cap C=\emptyset$. 
This implies $(V\cap X_0)\cap(C\cap X_0)=\emptyset$, 
which contradicts the assumption that $x_0$ belongs to the relative closure of $C\cap X_0$ in $X_0$.
\end{proof}

\begin{lemma}\label{qc2}
	Let $X$ be a topological space. 
	If $X_1\subseteq X$ is a closed subset, then for every quasi-compact subset $C\subseteq X$ the set $C\cap X_1$ is  quasi-compact with respect to the relative topology of $X_1$. 
\end{lemma}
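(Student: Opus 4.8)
The plan is to argue directly from the defining covering property of quasi-compactness, using the single extra observation that the complement of the closed set $X_1$ is open in $X$. The idea is to lift an arbitrary relatively-open cover of $C\cap X_1$ to an open cover of $C$ in $X$, apply quasi-compactness of $C$ there, and then restrict back to $X_1$.

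First I would fix an arbitrary family $\{W_i\}_{i\in I}$ of subsets that are open in the relative topology of $X_1$ and cover $C\cap X_1$. By the definition of the relative topology, each $W_i$ has the form $W_i=U_i\cap X_1$ for some open subset $U_i\subseteq X$. Since $X_1$ is closed, the set $X\setminus X_1$ is open in $X$, and I would check that the enlarged family $\{U_i\}_{i\in I}\cup\{X\setminus X_1\}$ covers $C$: indeed, any point of $C$ that lies in $X_1$ belongs to $C\cap X_1$, hence to some $W_i\subseteq U_i$, while any point of $C$ not in $X_1$ lies in $X\setminus X_1$.

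Next, since $C$ is quasi-compact, this open cover of $C$ admits a finite subcover, so there is a finite subset $I_0\subseteq I$ with $C\subseteq(X\setminus X_1)\cup\bigcup_{i\in I_0}U_i$. Intersecting both sides with $X_1$ eliminates the term $X\setminus X_1$ and yields $C\cap X_1\subseteq\bigcup_{i\in I_0}(U_i\cap X_1)=\bigcup_{i\in I_0}W_i$. Thus the finite subfamily $\{W_i\}_{i\in I_0}$ already covers $C\cap X_1$, which shows that $C\cap X_1$ is quasi-compact in the relative topology of $X_1$. There is essentially no genuine obstacle in this argument; the only point requiring a little care is the bookkeeping of the relative topology, namely correctly lifting each relatively-open $W_i$ to an open set $U_i$ of $X$ and then restricting the resulting finite subcover back to $X_1$.
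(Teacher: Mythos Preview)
Your proof is correct and follows essentially the same approach as the paper's own proof: both lift a relatively-open cover of $C\cap X_1$ to an open cover of $C$ by adjoining the open set $X\setminus X_1$, extract a finite subcover by quasi-compactness of $C$, and then intersect back with $X_1$. The arguments are virtually identical up to notation.
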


\begin{proof}
Let $\{W_j\}_{j\in J}$ be an arbitrary open cover of $C\cap X_1$ in the relative topology of $X_1$. 
Hence $C\cap X_1\subseteq \bigcup\limits_{j\in J}W_j$ 
and for every $j\in J$ there exists an open subset $V_j$ of $X$ with $W_j=V_j\cap X_1$. 
Since $X_1$ is a closed subset of $X$, it follows that $X\setminus X_1$ is open in $X$, which contains $C\setminus X_1$. 
Therefore $C\subseteq (X\setminus X_1)\cup\bigcup\limits_{j\in J} V_j$ is an open cover of $C$. 
Since $C$ is quasi-compact, there exists a finite subset $J_0\subseteq J$ with $C\subseteq (X\setminus X_1)\cup\bigcup\limits_{j\in J_0} V_j$. 
This implies 
$$C\cap X_1\subseteq \bigcup\limits_{j\in J_0} V_j\cap X_1=
\bigcup\limits_{j\in J_0} W_j$$
and we are done.
\end{proof}

\begin{proposition}\label{qc3}
Let $G$ be a Heisenberg group with its unitary dual $X=\widehat{G}$. 
Denote by $\Gamma_2\subseteq X$ the set of characters of $G$, 
and $\Gamma_1:=X\setminus \Gamma_1\simeq\RR^\times$. 
Then a subset $C\subseteq X$ is quasi-compact if and only if it satisfies the following conditions: 
\begin{enumerate} 
	\item $C\cap \Gamma_2$ is a compact subset of $\Gamma_2$;  \item $C\cap\Gamma_1$ is a bounded closed subset of $\Gamma_1$; 
	\item if $0\in\RR$ is an accumulation point of $C\cap\Gamma_1$, then $C\cap \Gamma_2\ne\emptyset$. 
	\end{enumerate}
\end{proposition}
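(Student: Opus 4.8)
The plan is to work with the explicit realization $X=\widehat{G}\simeq\gg^*/G$ afforded by Kirillov's correspondence together with the quotient topology on the orbit space. Fix a generator $Z$ of the one-dimensional center of $\gg$; then every $\xi\in\gg^*$ has a central value $\langle\xi,Z\rangle=:\lambda\in\RR$, the orbit $\Oc_\xi$ is a single point (a character) exactly when $\lambda=0$, and for $\lambda\ne0$ the orbit $\Oc_\xi$ is the whole affine hyperplane $\{\eta\in\gg^*\mid\langle\eta,Z\rangle=\lambda\}$. This identifies $\Gamma_1$ with $\RR^\times$ via $\Oc_\xi\mapsto\langle\xi,Z\rangle$, and $\Gamma_2$ with the central hyperplane $\{\langle\cdot,Z\rangle=0\}$, a finite-dimensional vector space carrying its usual topology. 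The single fact that drives everything is the following description of neighborhoods of characters, which I would record first as a lemma: since $q$ is open and continuous and since an open saturated subset of $\gg^*$ meeting the central hyperplane must contain the full orbit $\{\langle\cdot,Z\rangle=\lambda\}$ for all sufficiently small $|\lambda|$, \emph{every} open neighborhood in $X$ of a character $\chi\in\Gamma_2$ contains all points of $\Gamma_1$ with $0<|\lambda|<\varepsilon$ for some $\varepsilon>0$. Dually $\Gamma_2$ is closed (as in Example~\ref{length2index1}), so $\Gamma_1$ is open, and distinct points of $\Gamma_1$ are separated in $X$ by $q$-preimages of disjoint open intervals of $\RR^\times$ that avoid $0$.

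For the necessity of the three conditions, suppose $C$ is quasi-compact. Condition (1) is immediate from Lemma~\ref{qc2}: as $\Gamma_2$ is closed, $C\cap\Gamma_2$ is quasi-compact in the Hausdorff space $\Gamma_2$, hence compact. For (2), closedness of $C\cap\Gamma_1$ in $\Gamma_1$ follows from Lemma~\ref{qc1}, since $X$ has the property $T_1$ and the points of $\Gamma_1$ are separated in $X$; boundedness follows by feeding $C$ the open cover consisting of the bounded ``shells'' $\{k-1<|\lambda|<k+1\}$ in $\Gamma_1$ together with the open set $q(\{|\langle\cdot,Z\rangle|<2\})$, no finite subfamily of which can absorb an unbounded subset of $C\cap\Gamma_1$. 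For (3), if $C\cap\Gamma_2=\emptyset$ then $C\subseteq\Gamma_1$, and the open cover $\{\{|\lambda|>1/k\}\}_{k\ge1}$ of $\Gamma_1$ admits a finite subcover of $C$ only if $C$ is bounded away from $0$; contrapositively, if $0$ is an accumulation point of $C\cap\Gamma_1$ then $C\cap\Gamma_2\ne\emptyset$.

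For the sufficiency, assume (1)--(3) and take an arbitrary open cover $\{O_i\}_{i\in I}$ of $C$, splitting into two cases according to whether $0$ is an accumulation point of $C\cap\Gamma_1$. If it is not, then by (2) the set $C\cap\Gamma_1$ is closed, bounded, and bounded away from $0$ in $\RR$, hence compact; together with the compact set $C\cap\Gamma_2$ from (1) this exhibits $C$ as a finite union of quasi-compact subsets of $X$, so $C$ is quasi-compact. If $0$ is an accumulation point, then (3) furnishes a character $\chi_0\in C\cap\Gamma_2$; choosing $O_{i_0}\ni\chi_0$, the neighborhood lemma shows $O_{i_0}$ swallows all of $C\cap\Gamma_1$ with $0<|\lambda|<\varepsilon$. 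The leftover set $C\cap\Gamma_1\cap\{|\lambda|\ge\varepsilon\}$ is closed, bounded and bounded away from $0$, hence compact, and $C\cap\Gamma_2$ is compact; extracting finite subcovers of these two compact pieces and adjoining $O_{i_0}$ yields a finite subcover of $C$.

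The crux of the argument, and the source of the asymmetric third condition, is the neighborhood lemma of the first paragraph: the collapse of the whole pencil $\Gamma_1\simeq\RR^\times$ onto all of $\Gamma_2$ as $\lambda\to0$. This is precisely the failure of the Hausdorff property, and it is what allows a nonempty $C\cap\Gamma_2$ to cover the part of $C$ accumulating at $0$. I expect the main technical care to lie in justifying this lemma cleanly from the quotient topology — that open saturated sets meeting the central hyperplane necessarily contain nearby full orbits — and in checking that quasi-compactness of the pieces computed inside $\Gamma_1$ and inside $\Gamma_2$ transfers to quasi-compactness inside $X$, which holds because the relevant subspace topologies agree.
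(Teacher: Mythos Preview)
Your proposal is correct and follows essentially the same route as the paper: you invoke Lemmas~\ref{qc1} and~\ref{qc2} for conditions (1) and the closedness half of (2), extract boundedness and condition~(3) from suitable exhausting open covers, and handle sufficiency by the same two-case split on whether $0$ is an accumulation point of $C\cap\Gamma_1$. The only cosmetic differences are that the paper states the neighborhood description of the topology up front as a self-contained ``preliminary remark'' rather than deriving it from the quotient map $q$, and for boundedness it uses the nested cover $D_n=\Gamma_2\cup\{|t|<n\}$ in place of your shells-plus-one-big-set cover.
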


\begin{proof}
We begin by some preliminary remarks on the topology of $X$:  
A subset $F\subseteq X$ is closed if and only if $F\cap \Gamma_j$ is closed in $\Gamma_j$ for $j=1,2$ and 
additionally, if $0\in\RR$ is an accumulation point of $F\cap\Gamma_1$, then $\Gamma_2\subseteq F$. 
Equivalently, a subset $D\subseteq X$ is open if and only if $D\cap\Gamma_j$ is open in $\Gamma_j$ for $j=1,2$ and additionally, 
if $0\in\RR$ is an accumulation point of $\Gamma_1\setminus D$, then $D\subseteq \Gamma_1$. 
This is further equivalent to the fact that 
$D\subseteq X$ is open if and only if $D\cap\Gamma_j$ is open in $\Gamma_j$ for $j=1,2$ and additionally, 
if $D\cap\Gamma_2\ne\emptyset$, then 
$\{t\in\Gamma_1\mid \vert t\vert<\varepsilon_0\}\subseteq D$ 
for suitable $\varepsilon_0>0$. 

Now assume that $C\subseteq X$ is quasi-compact. 
Then $C\cap \Gamma_2$ is a compact subset of~$\Gamma_2$ by Lemma~\ref{qc2}, since the relative topology of $\Gamma_2$ is Hausdorff. 
Moreover, $C\cap\Gamma_1$ is a closed subset of $\Gamma_1$ by Lemma~\ref{qc1}, since the points of $\Gamma_1$ are separated in $X$. 
We will now also check that $C\cap\Gamma_1$ is a bounded subset of  $\Gamma_1\simeq\RR^\times$. 
To this end, for every $n\ge 1$ define the open subset 
$D_n:=\Gamma_2\cup \{t\in \Gamma_1\mid \vert t\vert<n \}\subseteq X$. 
One has $D_1\subseteq D_2\subseteq \cdots$ and $\bigcup\limits_{n\ge 1}D_n=X\supseteq C$. 
Since $C$ is quasi-compact, it then follows that $C\subseteq D_{n_0}$ for some $n_0\ge 1$, that is, $D\cap\Gamma_1$ is bounded in $\Gamma_1$. 
To check the third condition in the statement, 
let us assume that $C\cap\Gamma_2=\emptyset$, that is, $C\subseteq\Gamma_1$. 
Now define the open subset $U_n:=\{t\in\Gamma_1\mid\vert t\vert>1/n\}\subseteq X$ for every $n\ge 1$. 
One has $U_1\subseteq U_2\subseteq \cdots$ and $\bigcup\limits_{n\ge 1}U_n=\Gamma_1\supseteq C$. 
Since $C$ is quasi-compact, it then follows that $C\subseteq U_{n_0}$ for some $n_0\ge 1$, that is,
$0\in\RR$ is not  an accumulation point of $C=C\cap\Gamma_1$. 

Conversely, let $C\subseteq X$ satisfy the three conditions from the statement. 
To prove that $C$ is quasi-compact let $\{W_j\}_{j\in J}$ be an arbitrary open cover of $C$. 
Then 
$$(C\cap\Gamma_1)\sqcup(C\cap\Gamma_2)=C\subseteq\bigcup_{j\in J}W_j.$$
As $\Gamma_2$ is a closed subset of $X$ whose relative topology is Hausdorff, it is straightforward to check that $C\cap \Gamma_2$ is a compact subset of $X$, 
hence there exists a finite subset $J_2\subseteq J$ with 
$C\cap\Gamma_2\subseteq\bigcup\limits_{j\in J_2}W_j$. 
We will now discuss separately two cases that can occur: 

Case 1: $0\in\RR$ is an accumulation point of $C\cap\Gamma_1$. 
Then $C\cap\Gamma_2\ne\emptyset$ by hypothesis, hence there exists $j_0\in J$ with $W_{j_0}\cap\Gamma_2\ne\emptyset$. 
By the preliminary remarks on the topology of $X$, there exists $t_0>0$ with $\{t\in\Gamma_1\mid \vert t\vert<t_0\}\subseteq W_{j_0}$. 
Since $C\cap\Gamma_1$ is a closed bounded subset of $\Gamma_1$, 
it follows that $(C\cap\Gamma_1)\setminus W_{j_0}$ is a compact subset of $C\cap\Gamma_1$, hence there exists a finite subset $J_1\subseteq J$ with 
$(C\cap\Gamma_1)\setminus W_{j_0}\subseteq\bigcup\limits_{j\in J_1}W_j$. 
Then 
$$C\subseteq ((C\cap\Gamma_1)\setminus W_{j_0})
\cup W_{j_0}
\cup (C\cap\Gamma_2)\subseteq \bigcup_{j\in J_1\cup\{j_0\}\cup J_2}W_j$$
hence we have obtained a finite subfamily of $\{W_j\}_{j\in J}$ that still covers $C$. 

Case 2: $0\in\RR$ is not an accumulation point of $C\cap\Gamma_1$. 
Then there exists $t_1>0$ with $C\cap\Gamma_1\subseteq 
\{t\in\Gamma_1\mid \vert t\vert\ge t_1\}$. 
As $C\cap\Gamma_1$ is a closed bounded subset of $\Gamma_1$, it then follows that $C\cap\Gamma_1$ is compact, 
hence there exists a finite subset $J_1\subseteq J$ with 
$C\cap\Gamma_1\subseteq\bigcup\limits_{j\in J_1}W_j$. 
Therefore 
$$C\subseteq \bigcup_{j\in J_1\cup J_2}W_j$$
and we have again obtained a finite subfamily of $\{W_j\}_{j\in J}$ that covers $C$. 
This completes the proof. 
\end{proof}

\begin{corollary}\label{qc4}
If $G$ is a Heisenberg group then there exist two quasi-compact subsets of $\widehat{G}$ whose intersection fails to be quasi-compact.
\end{corollary}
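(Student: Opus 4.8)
The plan is to exploit the characterization of quasi-compact sets given in Proposition~\ref{qc3}, and in particular its third condition, which couples the behaviour of a set near the origin of $\Gamma_1\simeq\RR^\times$ with its intersection with the space of characters $\Gamma_2$. The key observation is that this third condition is \emph{not} stable under intersection: a set may satisfy it precisely because it meets $\Gamma_2$, while still accumulating at $0\in\RR$ along $\Gamma_1$; and two such sets can intersect in a set that still accumulates at $0$ but no longer meets $\Gamma_2$.

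Concretely, I would set $S:=\{1/n\mid n\ge1\}\subseteq\Gamma_1$ and choose two distinct characters $p,q\in\Gamma_2$, which is possible because for a Heisenberg group $\Gamma_2=[\gg,\gg]^\perp$ is a vector space of positive dimension and hence contains at least two distinct points. I would then define $C_1:=S\cup\{p\}$ and $C_2:=S\cup\{q\}$.

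First I would verify that $C_1$ and $C_2$ are quasi-compact by checking the three conditions of Proposition~\ref{qc3}. For condition~(1), $C_i\cap\Gamma_2$ is a single point, hence compact. For condition~(2), $C_i\cap\Gamma_1=S$ is bounded, and is relatively closed in $\Gamma_1\simeq\RR^\times$ since its only accumulation point in $\RR$ is $0$, which does not belong to $\RR^\times$. For condition~(3), although $0$ is indeed an accumulation point of $S=C_i\cap\Gamma_1$, we have $C_i\cap\Gamma_2=\{p\}$ (respectively $\{q\}$), which is nonempty, so the condition holds. Thus both $C_1$ and $C_2$ are quasi-compact.

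Finally I would note that $C_1\cap C_2=S$, because $p\ne q$ forces the two one-point pieces in $\Gamma_2$ to be disjoint. The intersection $S$ then fails condition~(3) of Proposition~\ref{qc3}: the origin $0\in\RR$ is an accumulation point of $S=S\cap\Gamma_1$, yet $S\cap\Gamma_2=\emptyset$. Hence $C_1\cap C_2$ is not quasi-compact, which is exactly the assertion. I do not expect any serious obstacle; the only points requiring a little care are the verification that $S$ is relatively closed in $\Gamma_1$ (where the excluded origin is what saves us) and the correct bookkeeping of condition~(3), which is precisely the feature responsible for the failure of quasi-compactness under intersection.
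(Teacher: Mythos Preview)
Your proof is correct and follows essentially the same approach as the paper: the paper takes disjoint nonempty compact sets $K_2,K_2'\subseteq\Gamma_2$ and a closed bounded $K_1\subseteq\Gamma_1$ accumulating at $0$, and sets $C=K_1\cup K_2$, $C'=K_1\cup K_2'$; your choice $S=\{1/n\}$, $\{p\}$, $\{q\}$ is simply a concrete instance of this construction, and your verification of the three conditions of Proposition~\ref{qc3} is accurate.
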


\begin{proof}
	Using the notation of Proposition~\ref{qc3},  
	let $K_2,K_2'\subseteq\Gamma_2$ be any nonempty compact subsets with $K_2\cap K_2'=\emptyset$. 
	For any closed bounded subset $K_1\subseteq\Gamma_1$ for which $0\in\RR$ is an accumulation point, define 
	$C:=K_1\cup K_2$ and $C':=K_1\cup K_2'$. 
	It then follows by Proposition~\ref{qc3} that $C$ and $C'$ are quasi-compact subsets of $X$ for which $C\cap C'$ fails to be quasi-compact. 
\end{proof}

\begin{corollary}\label{corHeis}
	If $G$ is a Heisenberg group then the set of Dini functions on $\widehat{G}$ fails to be closed to pointwise sum, product, and minimum. 
\end{corollary}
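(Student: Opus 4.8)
The plan is to combine Theorem~\ref{Dini1}, which identifies the Dini functions on $\widehat{G}=\Prim C^*(G)$ with the generalized Gelfand transforms $\{N(a)\mid a\in C^*(G)\}$, with the explicit description of the (non-Hausdorff) topology of $\widehat{G}$ from Proposition~\ref{qc3}, writing $\Gamma_2$ (the characters, $\cong\RR^2$) and $\Gamma_1\cong\RR^\times$. I will use only two properties of an arbitrary Dini function $\phi$, both standard features of the map $\Jc\mapsto\Vert a+\Jc\Vert$ and available either from the definition of Dini function together with Theorem~\ref{Dini1} or directly from \cite[\S 3.3]{Dix64}: (a) $\phi$ is lower semicontinuous, and (b) for every $c>0$ the superlevel set $\{\phi\ge c\}$ is quasi-compact. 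As recorded in the proof of Proposition~\ref{qc3}, every open neighbourhood of a character contains a punctured neighbourhood of $0$ in $\Gamma_1$, so the net $\lambda\to0$ in $\Gamma_1$ converges to every $\chi\in\Gamma_2$; hence (a) yields $\liminf_{\lambda\to0}\phi(\lambda)\ge\phi(\chi)$ for all $\chi$, that is $\liminf_{\lambda\to0}\phi(\lambda)\ge\max_{\Gamma_2}\phi$. Crucially, this one-sided inequality is a direct consequence of lower semicontinuity, so no continuous-field estimate for $\Vert\pi_\lambda(a)\Vert$ is required.

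Next I would fix two distinct characters $\chi_1\ne\chi_2$ and, using the canonical surjection $C^*(G)\twoheadrightarrow C^*(G/[G,G])\cong C_0(\Gamma_2)$, choose $a,b\in C^*(G)$ whose images $\hat a,\hat b\in C_0(\Gamma_2)$ are nonnegative bumps of height $1$ with disjoint supports, peaking at $\chi_1$ and $\chi_2$ respectively. Setting $f:=N(a)$ and $g:=N(b)$ (Dini functions by Theorem~\ref{Dini1}), one has $f|_{\Gamma_2}=\hat a$ and $g|_{\Gamma_2}=\hat b$, so $\max_{\Gamma_2}f=\max_{\Gamma_2}g=1$, while $\hat a\,\hat b\equiv0$ and $\min(\hat a,\hat b)\equiv0$; and by the previous paragraph $\liminf_{\lambda\to0}f(\lambda)\ge1$ and $\liminf_{\lambda\to0}g(\lambda)\ge1$.

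I would then treat $f+g$, $fg$ and $\min(f,g)$ uniformly. Their restrictions to $\Gamma_2$ are $\hat a+\hat b\le1$, $0$ and $0$, respectively, whereas the elementary inequalities $\liminf(f+g)\ge\liminf f+\liminf g$ and, for nonnegative functions, $\liminf(fg)\ge(\liminf f)(\liminf g)$ and $\liminf\min(f,g)\ge\min(\liminf f,\liminf g)$, give as $\lambda\to0$ the lower bounds $\ge2$, $\ge1$ and $\ge1$. Choosing $c\in(1,2)$ in the first case and $c\in(0,1)$ in the other two, the superlevel set $\{h\ge c\}$ of the relevant combination $h$ satisfies $\{h\ge c\}\cap\Gamma_2=\emptyset$ (since $h|_{\Gamma_2}<c$) and yet contains a punctured neighbourhood of $0$ in $\Gamma_1$ (since $\liminf_{\lambda\to0}h(\lambda)>c$); thus $0$ is an accumulation point of $\{h\ge c\}\cap\Gamma_1$ while $\{h\ge c\}\cap\Gamma_2=\emptyset$, violating condition (3) of Proposition~\ref{qc3}. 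Hence $\{h\ge c\}$ is not quasi-compact, so by property (b) the function $h$ cannot be a Dini function. This shows the set of Dini functions is closed neither under sum, nor product, nor minimum.

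The main conceptual point, and the step I would be most careful to justify, is that lower semicontinuity alone already produces the inequality in the needed direction, so the delicate analytic "gluing" of the unitary dual is never used. The remaining ingredients are routine: quasi-compactness of the superlevel sets of the generalized Gelfand transform (property (b)), and the realization of the disjoint bumps $\hat a,\hat b$ through the character quotient. One subtlety worth checking is that Proposition~\ref{qc3} characterizes quasi-compactness for \emph{arbitrary} subsets of $\widehat{G}$, which is exactly what permits its application to the possibly non-closed sets $\{h\ge c\}$.
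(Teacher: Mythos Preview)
Your argument is correct. The two key facts you invoke for $N(a)$ are indeed available: lower semicontinuity of $\Jc\mapsto\Vert a+\Jc\Vert$ is \cite[3.3.2]{Dix64}, and quasi-compactness of $\{\Jc:\Vert a+\Jc\Vert\ge c\}$ for $c>0$ is \cite[3.3.7]{Dix64}; combined with Theorem~\ref{Dini1} these give your properties (a) and (b) for every Dini function. The topology description in the proof of Proposition~\ref{qc3} does exactly what you need (any open set meeting $\Gamma_2$ swallows a punctured neighbourhood of $0$ in $\Gamma_1$), and the liminf inequalities for sums, nonnegative products, and minima are elementary. A minor cosmetic point: $\Gamma_2\cong\RR^{2n}$ for the $(2n{+}1)$-dimensional Heisenberg group, not $\RR^2$ in general, but your construction only needs two points with disjoint bumps, so this is harmless.

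Your route is genuinely different from the paper's. The paper does not build explicit Dini functions; instead it observes that quasi-compact subsets of $\widehat{G}$ are $G_\delta$ (liminal, separable case, \cite[4.7.10]{Dix64}) and then invokes \cite[Th.~4.12]{Ki04}, a general criterion tying closure of the Dini-function cone under sum/product/minimum to stability of quasi-compact (here $G_\delta$) sets under intersection, together with Corollary~\ref{qc4}. So the paper reduces the corollary to Corollary~\ref{qc4} via a black-box theorem of Kirchberg, whereas you bypass that external result entirely by exhibiting concrete $f,g$ and checking directly, via Proposition~\ref{qc3}, that the superlevel sets of $f+g$, $fg$, $\min(f,g)$ violate quasi-compactness. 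Your approach is more self-contained and transparent for this specific example; the paper's approach is shorter once one accepts \cite{Ki04} and has the advantage that it would transfer verbatim to any primitive ideal space where one can produce two quasi-compact sets with non-quasi-compact intersection.
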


\begin{proof}
We recall that since $C^*(G)$ is separable and liminal, the quasi-compact subsets of its spectrum $\widehat{G}$ are $G_\delta$-subsets. (See \cite[4.7.10]{Dix64}.)
Then use Theorem~\ref{Dini1} above along with \cite[Th. 4.12]{Ki04}.
\end{proof}

To put Corollary~\ref{corHeis} into perspective, we recall that if $X$ is locally compact space (that is, also Hausdorff), then the set of all Dini functions on $X$ is exactly the cone $\Cc_0^+(X)$ of all nonnegative functions on $X$ that vanish to infinity, and $\Cc_0^+(X)$
is closed pointwise sum, product, and minimum. 

\section{Noncommutative dimension theory}\label{section6}

In this section we discuss two noncommutative generalizations of the covering dimension of topological spaces. 
Both notions have been proved to be very effective in the study of special classes of $C^*$-algebras, but they were very little studied in the case of $C^*$-algebras of nilpotent Lie groups. 

\subsection{Real rank} 
We start our discussion with the notion of real rank, which in the case of $C^*$-algebras of Lie groups is easier to compute exactly in terms of the corresponding Lie algebra. 

\begin{definition}
	\normalfont
	Let $\Ac$ be any unital $C^*$-algebra, 
	and for any integer $n\ge1$ denote by $\Lc_n(\Ac)$ the set of all $n$-tuples 
	$(a_1,\dots,a_n)\in\Ac^n$ with $\Ac a_1+\cdots+\Ac a_n=\Ac$.  
	We also denote $\Ac^{\sa}:=\{a\in\Ac\mid a=a^*\}$. 
	
	The \emph{stable rank} of $\Ac$ is defined by 
	$$\tsr(\Ac):=\min\{n\ge 1\mid \text{$\Lc_n(\Ac)$ is dense in $\Ac^n$}\}$$ 
	with the usual convention $\min\emptyset=\infty$. 
	The \emph{real rank} of $\Ac$ is similarly defined by 
	$$\RRa(\Ac):=\min\{n\ge 0\mid \text{$\Lc_{n+1}(\Ac)\cap(\Ac^{\sa})^{n+1}$ is dense in $(\Ac^{\sa})^{n+1}$}\}.$$ 
	For any non-unital $C^*$-algebra, its real rank and its stable rank are defined as 
	the real rank, respectively the stable rank, of its unitization. 
\end{definition}

\begin{theorem}\label{th_exp}
	For every connected simply connected nilpotent Lie group $G$ with its Lie algebra $\gg$, 
	we have 
	$$\RRa(C^*(G))=\dim(\gg/[\gg,\gg]).$$ 
\end{theorem}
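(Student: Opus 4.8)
The plan is to establish the two inequalities separately: the lower bound from the abelianization of $G$, and the upper bound from the stratification of $\widehat{G}$ by coadjoint orbit types. Write $d:=\dim(\gg/[\gg,\gg])$. For the lower bound $\RRa(C^*(G))\ge d$, I would pass to the maximal abelian quotient. The group $G/[G,G]$ is a connected simply connected abelian Lie group, hence isomorphic to $\RR^d$, and the quotient homomorphism $G\to G/[G,G]$ induces a surjective $*$-homomorphism $C^*(G)\twoheadrightarrow C^*(\RR^d)$. Via the Fourier transform $C^*(\RR^d)\cong\Cc_0(\RR^d)$, whose real rank equals the covering dimension of $\RR^d$, namely $d$. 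Since the real rank does not increase under passage to a quotient $C^*$-algebra, this yields $\RRa(C^*(G))\ge\RRa(\Cc_0(\RR^d))=d$.

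For the upper bound $\RRa(C^*(G))\le d$ I would exploit the composition series of $C^*(G)$ afforded by the coarse stratification $\{\Xi_e\}_{e\in\Ec}$ of $\gg^*/G\simeq\widehat{G}$. By Lemma~\ref{coarse_lemma} each stratum $\Xi_e$ is a locally compact Hausdorff space over which the corresponding subquotient $A_e$ of $C^*(G)$ is a continuous-trace algebra. The decisive dichotomy is between the character stratum $\Xi_\emptyset=[\gg,\gg]^\perp\cong\RR^d$, for which $A_\emptyset\cong\Cc_0(\RR^d)$ has real rank exactly $d$, and every stratum $\Xi_e$ with $e\ne\emptyset$, whose associated representations are infinite-dimensional; there $A_e$ is isomorphic to $\Cc_0(\Xi_e)\otimes\Kc$ and is therefore stable. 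Because a nonzero stable $C^*$-algebra has stable rank one, the inequality $\RRa(B)\le 2\,\tsr(B)-1$ gives $\RRa(A_e)\le 1$ for every $e\ne\emptyset$. Thus, although an infinite-dimensional stratum may have covering dimension larger than $d$, its contribution to the real rank is at most $1$: this is the conceptual reason that the answer is $d$ rather than $\dim\widehat{G}$. Since $[\gg,\gg]^\perp$ is closed in $\widehat{G}$ by Lemma~\ref{quot}, the ideal $J\trianglelefteq C^*(G)$ attached to the open set of non-character orbits satisfies $C^*(G)/J\cong\Cc_0(\RR^d)$, while $J$ carries a finite composition series (finite by Theorem~\ref{dual_spec}) whose subquotients are the stable algebras $A_e$ with $e\ne\emptyset$.

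It then remains to propagate these real-rank estimates through finitely many extensions, and I expect this to be the main obstacle, since the real rank is not subadditive in short exact sequences in general. The plan is to establish, by induction on the length of the composition series, an extension lemma asserting that if $0\to I\to A\to B\to 0$ with $I$ stable of real rank $\le 1$ and $B$ of real rank $\le d$, then $\RRa(A)\le d$. Concretely, given a $(d+1)$-tuple of self-adjoint elements of the unitization of $A$, I would first perturb its image in the unitization of $B$ so that the sum of squares becomes invertible, using $\RRa(B)\le d$; lifting the perturbed tuple and then correcting modulo $I$ reduces the task to a perturbation supported on $I$, where the surplus of coordinates together with the stability of $I$ (its ``infinite matricial room'') permits rendering the full sum of squares invertible. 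Iterating this lemma along the composition series of $J$, and then once more for the extension $0\to J\to C^*(G)\to\Cc_0(\RR^d)\to 0$, yields $\RRa(C^*(G))\le d$ and completes the proof.
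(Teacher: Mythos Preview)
The paper gives no self-contained argument here; it refers to \cite[Th.~3.5]{BB16b}. Your overall strategy---lower bound from the abelianization, upper bound from the composition series of $C^*(G)$ whose subquotients are the continuous-trace algebras over the strata $\Xi_e$---is precisely the route taken there, so on the level of architecture there is nothing to compare.

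Two remarks on the execution. First, a minor overreach: the claim that \emph{every} nonzero stable $C^*$-algebra has stable rank one is stronger than what you need and than what is immediate (Rieffel's general bound is only $\tsr(A\otimes\Kc)\le 2$). For the subquotients at hand it is easy and should be argued directly: $\dim\Xi_e<\infty$ gives $\tsr(\Cc_0(\Xi_e))<\infty$, so Rieffel's formula $\tsr(M_n(B))=\lceil(\tsr(B)-1)/n\rceil+1$ together with the inductive-limit bound yields $\tsr(\Cc_0(\Xi_e)\otimes\Kc)=1$, hence $\RRa(A_e)\le 2\cdot 1-1=1$ as claimed.

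Second, the real gap: you correctly flag the extension step as the main obstacle, but your sketch does not close it. After lifting a tuple with invertible square-sum from the unitization of $B$, the lift has invertible square-sum only modulo $I$; producing a small self-adjoint $I$-supported perturbation that renders it invertible in the unitization of $A$ is a genuine lemma, and ``stability provides matricial room'' is at this point a heuristic rather than a proof. In \cite{BB16b} this step is handled by invoking an existing extension theorem for real rank under a stable-rank-one hypothesis on the ideal, not by an ad hoc perturbation. To complete your version you need either to locate and invoke such a result or to carry the perturbation argument through in full---both are doable, but neither fits in the paragraph you have written.
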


\begin{proof}
See \cite[Th. 3.5]{BB16b}.
\end{proof}

\begin{corollary}\label{cor_exp}
If $G_1$ and $G_2$ are connected simply connected nilpotent Lie groups whose $C^*$-algebras are $*$-isomorphic, then $\dim(\gg_1/[\gg_1,\gg_1])=\dim(\gg_2/[\gg_2,\gg_2])$, where $\gg_j$ is the Lie algebra of $G_j$ for $j=1,2$.
\end{corollary}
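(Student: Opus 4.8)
The plan is to deduce Corollary~\ref{cor_exp} directly from Theorem~\ref{th_exp}, exploiting the fact that the real rank is an invariant of the $*$-isomorphism class of a $C^*$-algebra. The key observation is that the real rank $\RRa(\Ac)$ is defined purely in terms of the algebraic and topological structure of $\Ac$ (density of certain tuples in $(\Ac^{\sa})^{n+1}$), so it is preserved under any $*$-isomorphism. First I would note that if $\Phi\colon C^*(G_1)\to C^*(G_2)$ is a $*$-isomorphism, then $\Phi$ restricts to a homeomorphic linear bijection $C^*(G_1)^{\sa}\to C^*(G_2)^{\sa}$ on the self-adjoint parts, and carries the set $\Lc_{n+1}(C^*(G_1))$ onto $\Lc_{n+1}(C^*(G_2))$ for every $n$, since the left-ideal condition $\Ac a_1+\cdots+\Ac a_{n+1}=\Ac$ is an algebraic property transported faithfully by $\Phi$. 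Consequently $\RRa(C^*(G_1))=\RRa(C^*(G_2))$.

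Combining this with Theorem~\ref{th_exp}, which identifies $\RRa(C^*(G_j))$ with $\dim(\gg_j/[\gg_j,\gg_j])$ for $j=1,2$, yields the chain of equalities
$$
\dim(\gg_1/[\gg_1,\gg_1])=\RRa(C^*(G_1))=\RRa(C^*(G_2))=\dim(\gg_2/[\gg_2,\gg_2]),
$$
which is exactly the assertion. The only mild subtlety concerns the non-unital case: since connected simply connected nilpotent Lie groups are noncompact (unless trivial), their $C^*$-algebras are non-unital, so by definition the real rank is computed on the unitization. I would therefore observe that a $*$-isomorphism $C^*(G_1)\to C^*(G_2)$ extends canonically to a $*$-isomorphism of the unitizations, so the invariance argument applies verbatim at the level of unitizations.

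I do not expect any genuine obstacle here, as the statement is a formal consequence of Theorem~\ref{th_exp} together with the $*$-isomorphism invariance of real rank. The only point requiring a sentence of care is the verification that real rank is indeed invariant under $*$-isomorphism — but this is immediate from its definition, since a $*$-isomorphism is in particular an isometric $*$-preserving algebra isomorphism, hence preserves self-adjointness, norm-density, and the relevant ideal-generation conditions. Thus the entire proof is essentially a two-line deduction.
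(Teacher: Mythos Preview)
Your proof is correct and matches the paper's own argument exactly: the paper simply observes that $*$-isomorphic $C^*$-algebras have the same real rank and then invokes Theorem~\ref{th_exp}. Your additional remarks on why real rank is a $*$-isomorphism invariant and on the non-unital unitization issue are accurate elaborations of points the paper leaves implicit.
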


\begin{proof} 
Two $*$-isomorphic $C^*$-algebras have the same real rank, hence the assertion follows by Theorem~\ref{th_exp}. 
\end{proof}

\begin{theorem}\label{th_exp_tsr}
	For every connected simply connected nilpotent Lie group $G$ with its Lie algebra $\gg$, 
	if we denote $r:=\dim(\gg/[\gg,\gg])$, then  
	$$\tsr(C^*(G))= 
	\begin{cases}
	1 &\text{ if and only if }G=\RR, \\
	1+\max\{[r/2],1\} &\text{ otherwise}.
	\end{cases}
	$$ 
\end{theorem}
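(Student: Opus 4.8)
The plan is to dispose of the case $G=\RR$ first and then prove the formula for all remaining groups by matching a lower and an upper bound for $\tsr(C^*(G))$. First I would record the dichotomy that $r=1$ holds if and only if $G=\RR$: if $\dim(\gg/[\gg,\gg])=1$, then $\gg$ is generated as a Lie algebra by a single element modulo its derived ideal, hence is generated by one element, hence abelian; nilpotency then forces $[\gg,\gg]=0$ and $\dim\gg=1$. For $G=\RR$ the Fourier transform gives $C^*(\RR)\cong C_0(\RR)$, whose stable rank is $\lfloor 1/2\rfloor+1=1$. Thus it remains to prove $\tsr(C^*(G))=1+[r/2]$ whenever $G\ne\RR$, in which case $r\ge2$ and so $\max\{[r/2],1\}=[r/2]$.

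For the lower bound I would invoke the general inequality $\RRa(\Ac)\le 2\,\tsr(\Ac)-1$, valid for every $C^*$-algebra~$\Ac$. Combined with Theorem~\ref{th_exp}, which gives $\RRa(C^*(G))=r$, this yields $2\,\tsr(C^*(G))-1\ge r$, that is $\tsr(C^*(G))\ge\lceil (r+1)/2\rceil=1+[r/2]$.

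For the upper bound I would use the short exact sequence
$$0\to I\to C^*(G)\to C_0([\gg,\gg]^\perp)\to 0,$$
in which the quotient is the $C^*$-algebra of the abelianisation $G/[G,G]\cong\RR^r$, i.e.\ the closed set of characters identified in Lemma~\ref{quot}, while $I$ corresponds to the open set of non-character classes. The quotient is $C_0(\RR^r)$, with $\tsr=\lfloor r/2\rfloor+1=1+[r/2]$. Every class in $\widehat{I}$ is attached to a coadjoint orbit of positive dimension, hence to an infinite-dimensional representation, so the fine stratification of $\widehat{G}$ exhibits $I$ as an iterated extension of subquotients of the form $C_0(V)\otimes\Kc$ with $\Kc$ infinite-dimensional. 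Writing $\Kc$ as an inductive limit of matrix algebras and using $\tsr(M_n(C_0(V)))=\lceil\lfloor(\dim V)/2\rfloor/n\rceil+1$ (which is at most $2$ for large $n$) together with Rieffel's inequality $\tsr(\varinjlim A_k)\le\liminf_k\tsr(A_k)$, one gets $\tsr(C_0(V)\otimes\Kc)\le2$; assembling these pieces I expect $\tsr(I)\le2$. Since $r\ge2$ forces $1+[r/2]\ge2$, the bound $\tsr(C^*(G))\le1+[r/2]$ would then follow from a stable-rank estimate for the extension asserting that adjoining the stable ideal $I$ does not raise the stable rank above $\max\{\tsr(C_0(\RR^r)),2\}$.

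The hard part will be precisely this last estimate. The naive extension inequality $\tsr(\Ac)\le\max\{\tsr(I),\tsr(\Ac/I),\mathrm{csr}(\Ac/I)\}$ is not sharp here: the connective stable rank of $C_0(\RR^r)$ equals $\lceil r/2\rceil+1$, which strictly exceeds $1+[r/2]$ when $r$ is odd, so a direct application would overshoot the claimed value by one. To recover the exact constant one must exploit the stability of the subquotients of $I$ (each absorbs $\Kc$) in order to suppress the connective-stable-rank correction term. This is the step where I would either invoke a refined stable-rank estimate for extensions of $C_0(X)$ by $C_0(Y)\otimes\Kc$, or else carry out the bookkeeping over the fine stratification by hand, and it is the genuine obstacle to a short self-contained argument.
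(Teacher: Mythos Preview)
The paper supplies no argument here; the proof is a bare citation of \cite[Th.~5.4]{BB16b}, where the formula is established for all exponential solvable Lie groups. So there is nothing in the present text to compare your outline against, and your sketch is already more detailed than what appears here.

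Your lower bound is correct and is the standard opening move: the Brown--Pedersen inequality $\RRa(\Ac)\le 2\,\tsr(\Ac)-1$ combined with Theorem~\ref{th_exp} gives $\tsr(C^*(G))\ge 1+[r/2]$, and the reduction of the case $r=1$ to $G=\RR$ is fine.

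The gap you flag in the upper bound is genuine, and your self-diagnosis is accurate. Rieffel's sharpest general extension bound is $\tsr(\Ac)\le\max\{\tsr(I),\tsr(\Ac/I),\mathrm{csr}(\Ac/I)\}$, and for odd $r\ge 3$ one really has $\mathrm{csr}(C_0(\RR^r))>1+[r/2]$: at $n=1+[r/2]=(r+1)/2$ the space $Lg_n(C(S^r))$ retracts onto $\mathrm{Map}(S^r,S^{2n-1})=\mathrm{Map}(S^r,S^r)$, whose $\pi_0$ is $\ZZ$, so the connected component $GL_n^0$ cannot act transitively. Thus the inequality overshoots by one exactly as you say. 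Stability of the subquotients of $I$ (which, via $\mathrm{csr}(B\otimes\Kc)\le 2$, does yield $\tsr(I)\le 2$ as you claim) is not by itself enough to suppress the $\mathrm{csr}$ contribution coming from the commutative quotient. Closing this last gap needs input beyond the abstract extension machinery---either a lifting argument tailored to this specific extension, or an appeal to the earlier structural computations for nilpotent Lie group $C^*$-algebras on which \cite{BB16b} builds---and that is indeed where the substance of the cited reference lies.
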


\begin{proof}
See \cite[Th. 5.4]{BB16b}.
\end{proof}

We note that the above formulas for the real rank and for the stable rank actually hold true unchanged in the more general case of exponential solvable Lie groups, as shown in \cite{BB16b}. 

\subsection{Nuclear dimension}

The notion of nuclear dimension was introduced in  \cite{WZ10} as a noncommutative generalization of the the topological  covering dimension, replacing coverings of topological spaces by appropriate finite-rank approximations of the identity map of a $C^*$-algebra. 

\begin{definition}\label{dimn}
	A $C^*$-algebra $\Ac$  has nuclear dimension less than  $n$ if there is a net $(F_\iota, \phi_\iota, \varphi_\iota)_{\iota\in I}$ of triplets where $F_\iota$ are finite dimensional $C^*$-algebras, and $\varphi_\iota\colon F_\iota\to \Ac$ and $\psi_\iota\colon \Ac\to F_\iota$ are completely positive maps satisfying
	\begin{itemize}
		\item[i) ] $\varphi_{\iota}\circ \psi_{\iota}(a) \to a$ for every $a\in\Ac$; 
		\item[ii)] $\Vert\phi_\iota\Vert \le 1$;
		\item[iii)]  There is a decomposition $F_\iota =\bigoplus\limits_{k=1}^{n+1} F_\iota^{( k)}$, where
		 $F_\iota^{( k)}$ are ideals such that $\varphi_{\iota}\vert_{F_\iota^{(k)}}$ is completely positive and contractive of order zero, that is, whenever $h_1, h_2\in F_\iota^{(k)}$ are positive elements  such that $h_1 h_2=0$, one has $$\varphi_\iota(h_1)\varphi_\iota(h_2)=0.$$
	\end{itemize}
The \textit{nuclear dimension}, denoted $\dim_n\Ac$,  is the smallest 
$n\in \NN\cup \{ \infty\}$ with the above properties. 
\end{definition}

Here are some of the properties of the nuclear dimension that we need later on. 
\begin{enumerate}
\item	 
If $\Ac$ is a separable continuous trace $C^*$-algebra, then 
\begin{equation}\label{nucdim_trace}
	 \dim_n\Ac=\dim (\widehat{\Ac}).
	\end{equation}
	(See \cite[Cor.~2.10]{WZ10}.)
	\item
	For an exact sequence of $C^*$-algebras
	$$0\to \Jc\to \Ac\to \Qc\to 0 $$
	one has 
	\begin{equation}\label{nucdim_exact}
	\max (\dim_n \Jc, \dim_n \Qc) \le \dim_n\Ac \le \dim_n\Jc +\dim_n\Qc +1.
	\end{equation}
	(See \cite[Prop.~2.9]{WZ10}.)
\end{enumerate}

\begin{lemma}\label{dimn_lemma}
	Let $\Ac$ be any separable $C^*$-algebra with a family of closed two-sided ideals 
	$$\{0\}=\Jc_0\subseteq\Jc_1\subseteq\cdots\subseteq\Jc_n=\Ac$$
	where for each $j=1,\dots,n$,   $\Jc_j/\Jc_{j-1}$ has continuous trace, its spectrum $\Gamma_j=\widehat{J_j/J_{j-1}}$ has finite covering dimension, and all its irreducible representations are infinite dimensional.   
	Then 
	$$\max\{\dim (\Gamma_j)\mid j=1,\dots,n\} \le  \dim_n(\Ac)\le 
	\sum\limits_{j=1}^n \dim (\Gamma_j)+n-1.$$ 
	 \end{lemma}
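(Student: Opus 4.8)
The plan is to obtain the two bounds from the iterated exact-sequence estimate \eqref{nucdim_exact} applied to the given composition series, combined with the continuous-trace formula \eqref{nucdim_trace}. First I would record the key input: for each $j$ the subquotient $\Jc_j/\Jc_{j-1}$ is a separable continuous-trace $C^*$-algebra, so by \eqref{nucdim_trace} we have $\dim_n(\Jc_j/\Jc_{j-1})=\dim(\Gamma_j)$. The hypothesis that the spectra $\Gamma_j$ have finite covering dimension guarantees these numbers are finite, so all the arithmetic below takes place in $\NN$ rather than involving $\infty$. The hypothesis that all irreducible representations are infinite dimensional is not needed for the nuclear-dimension bounds themselves; it is presumably present to ensure the subquotients are genuinely noncommutative continuous-trace algebras (and will matter for the sharper statements the paper is building toward), so I would simply carry it along without invoking it in this particular estimate.

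For the \emph{lower} bound, I would use the left-hand inequality in \eqref{nucdim_exact}. Consider the exact sequence $0\to\Jc_{j-1}\to\Jc_j\to\Jc_j/\Jc_{j-1}\to0$, which gives $\dim_n(\Jc_j/\Jc_{j-1})\le\dim_n(\Jc_j)\le\dim_n(\Ac)$, the last step following because $\Jc_j$ is an ideal of $\Ac$ and monotonicity of nuclear dimension under passage to ideals is itself a consequence of \eqref{nucdim_exact} applied to $0\to\Jc_j\to\Ac\to\Ac/\Jc_j\to0$. Hence $\dim(\Gamma_j)=\dim_n(\Jc_j/\Jc_{j-1})\le\dim_n(\Ac)$ for every $j$, and taking the maximum over $j$ yields $\max_j\dim(\Gamma_j)\le\dim_n(\Ac)$.

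For the \emph{upper} bound I would induct on $n$. The base case $n=1$ is immediate since $\Ac=\Jc_1=\Jc_1/\Jc_0$ has continuous trace, giving $\dim_n(\Ac)=\dim(\Gamma_1)$, which matches the claimed bound $\sum_{j=1}^1\dim(\Gamma_j)+0$. For the inductive step, apply the right-hand inequality in \eqref{nucdim_exact} to the sequence $0\to\Jc_{n-1}\to\Ac\to\Ac/\Jc_{n-1}\to0$, obtaining
\begin{equation*}
\dim_n(\Ac)\le\dim_n(\Jc_{n-1})+\dim_n(\Ac/\Jc_{n-1})+1.
\end{equation*}
Here $\Ac/\Jc_{n-1}=\Jc_n/\Jc_{n-1}$ has continuous trace, so $\dim_n(\Ac/\Jc_{n-1})=\dim(\Gamma_n)$, while $\Jc_{n-1}$ carries the shorter composition series $\{0\}=\Jc_0\subseteq\cdots\subseteq\Jc_{n-1}$ with the same subquotients $\Gamma_1,\dots,\Gamma_{n-1}$, so the induction hypothesis gives $\dim_n(\Jc_{n-1})\le\sum_{j=1}^{n-1}\dim(\Gamma_j)+(n-2)$. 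Substituting,
\begin{equation*}
\dim_n(\Ac)\le\Bigl(\sum_{j=1}^{n-1}\dim(\Gamma_j)+n-2\Bigr)+\dim(\Gamma_n)+1=\sum_{j=1}^{n}\dim(\Gamma_j)+n-1,
\end{equation*}
completing the induction.

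The main obstacle, such as it is, is bookkeeping rather than conceptual: one must verify that each subquotient in the truncated series is again a separable continuous-trace algebra so that \eqref{nucdim_trace} applies at every inductive stage, and one must track the accumulating $+1$ terms so that the $n-1$ constant comes out exactly (it is the sum of $n-1$ such penalties across the $n-1$ extension steps, and the induction is precisely the device that confirms no extra overcounting occurs). I do not expect any genuine difficulty, since \eqref{nucdim_trace} and \eqref{nucdim_exact} are quoted as black boxes and the argument is a routine telescoping of the exact-sequence estimate along the given filtration.
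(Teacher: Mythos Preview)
Your proof is correct and follows essentially the same approach as the paper: induction on the length of the filtration, using \eqref{nucdim_trace} for the subquotients and \eqref{nucdim_exact} for the extension step. The only cosmetic difference is that the paper peels off $\Jc_1$ at the bottom (applying the induction hypothesis to $\Ac/\Jc_1$ and thereby getting both inequalities at once), while you peel off the top quotient $\Ac/\Jc_{n-1}$ and apply the hypothesis to $\Jc_{n-1}$, handling the lower bound separately; either direction works equally well.
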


\begin{proof}
	We proceed by induction on $n$. 
	The case $n=1$ is obvious. 
	
	If we assume $n\ge 2$ and the assertion already proved for $n-1$, 
	then, using the family of 
	closed two-sided ideals 
	$$\{0\}=\Jc_1/\Jc_1\subseteq\Jc_2/\Jc_1\subseteq\cdots\subseteq\Jc_n/\Jc_1=\Ac/\Jc_1$$
	of $\Ac/\Jc_1$ for which $(\Jc_j/\Jc_1)/(\Jc_{j-1}/\Jc_1)\simeq \Jc_j/\Jc_{j-1}$ is a continuous trace  $C^*$-algebra,  $j=2,\dots,n$, 
	then the induction hypothesis implies 
$$
\max\{\dim (\Gamma_j)\mid j=2,\dots,n\} \le  \dim_n(\Ac/\Jc_1 )\le 
	\sum\limits_{j=2}^n \dim (\Gamma_j)+n-2.
	$$ 
		Using  \eqref{nucdim_exact} for the exact sequence
	$0\to \Jc_1\to \Ac\to \Ac/\Jc_1\to 0$
	we directly obtain the assertion for $n$, and this completes the proof. 
\end{proof}

In the next proposition we use notation introduced in Section~\ref{section-nilpotent}.

\begin{proposition}\label{dimn_prop}
Let $G$ be a connected simply connected nilpotent Lie group. 
Then for any Jordan-H\"older basis for $\gg$ with corresponding set of jump indices $\{e_j\}_{j=1, \dots, m}$, and coarse partition
$\gg^*/G= \bigcup\limits_{j=1}^m \Xi_{e_j}$, we have
\begin{equation}\label{dimn_ineq}
 \max\{\dim(\Xi_{e_j})\mid j=1, \dots, m\} \le \dim_n C^*(G) \le \sum\limits_{j=1}^m \dim(\Xi_{e_j})+m-1.
 \end{equation}
\end{proposition}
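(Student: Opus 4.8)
The plan is to apply Lemma~\ref{dimn_lemma} to a suitable filtration of $C^*(G)$ by closed two-sided ideals whose subquotients are continuous-trace $C^*$-algebras with the required properties. The natural candidate for this filtration comes from the coarse stratification $\{\Omega_e\}_{e\in\Ec}$ of $\gg^*$, or equivalently from the partition $\gg^*/G=\bigcup_{j=1}^m\Xi_{e_j}$ into the locally closed strata $\Xi_{e_j}$ recorded in \eqref{coarse_eq1}. Since the correspondence $\widehat{G}\simeq\gg^*/G\simeq\Prim(C^*(G))$ is a homeomorphism (using that $C^*(G)$ is type~I, indeed liminal) and since the closed subsets of $\Prim(C^*(G))$ are in bijection with the closed two-sided ideals of $C^*(G)$, an increasing chain of open $G$-invariant subsets of $\gg^*/G$ with locally closed differences $\Xi_{e_j}$ will produce the desired chain of ideals $\{0\}=\Jc_0\subseteq\Jc_1\subseteq\cdots\subseteq\Jc_m=C^*(G)$.

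First I would order the index sets $e_1,\dots,e_m$ compatibly with the ordering $\prec$ on $\Ec$ so that the unions $U_k:=\bigcup_{j\le k}\Xi_{e_j}$ are open in $\gg^*/G$ and each $\Xi_{e_k}=U_k\setminus U_{k-1}$ is thereby open in its complement, hence locally closed; this is exactly the stratification underlying the solving series of Theorem~\ref{dual_spec}, so I would cite that structure rather than reconstruct it. Letting $\Jc_k$ be the ideal of $C^*(G)$ corresponding to the open set $U_k$, the subquotient $\Jc_k/\Jc_{k-1}$ has spectrum homeomorphic to $\Xi_{e_k}$. The key input is then that each subquotient has continuous trace: this should follow from Lemma~\ref{coarse_lemma}, whose part~\eqref{coarse_lemma_item1} gives that $\Xi_{e_k}$ is Hausdorff and whose part~\eqref{coarse_lemma_item2} gives continuity of the traces $\Oc\mapsto\Tr(\pi_{\Oc}(\phi))$ for $\phi\in\Cc_c^\infty(G)$; together with the Hausdorff (hence $T_1$, locally compact) spectrum this yields the continuous-trace property. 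One must also note that every irreducible representation of $\Jc_k/\Jc_{k-1}$ is infinite dimensional whenever $e_k\ne\emptyset$, because the associated coadjoint orbits are then positive-dimensional; the single stratum $\Xi_\emptyset=[\gg,\gg]^\perp$ of one-dimensional orbits must be handled separately, either by peeling it off or by observing that its contribution $\dim(\Xi_\emptyset)$ is already covered by the stated bounds. Once these hypotheses are verified, Lemma~\ref{dimn_lemma} applied with $n=m$ and $\Gamma_j=\Xi_{e_j}$ yields \eqref{dimn_ineq} directly, noting that only the nonempty strata contribute and that $\dim(\Xi_{e_j})$ is finite (each stratum being a Hausdorff locally closed piece of the finite-dimensional object $\gg^*/G$).

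The main obstacle I expect is the verification that each subquotient $\Jc_k/\Jc_{k-1}$ genuinely has \emph{continuous} trace, as opposed to merely having a Hausdorff spectrum. The continuity of the scalar traces from Lemma~\ref{coarse_lemma}\eqref{coarse_lemma_item2} is a necessary ingredient but does not by itself give the full continuous-trace condition (Fell's condition / local existence of an abundance of continuous-trace elements), so the careful step is to translate the pointwise trace continuity on the Hausdorff stratum into the $C^*$-algebraic continuous-trace property for the subquotient; this is where one leans most heavily on the liminality of $C^*(G)$ together with the standard characterization of continuous-trace algebras over a Hausdorff spectrum. The secondary difficulty is purely bookkeeping: ensuring the chosen ordering of the strata really makes the partial unions open (so that the $\Jc_k$ are ideals and the chain is increasing), and confirming that the infinite-dimensionality-of-representations hypothesis of Lemma~\ref{dimn_lemma} is met or else that the exceptional character stratum does not disrupt the two-sided estimate.
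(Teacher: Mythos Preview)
Your proposal is correct and follows exactly the paper's approach: the paper's proof is the single sentence ``The proposition is a direct consequence of above Lemmas~\ref{dimn_lemma} and \ref{coarse_lemma},'' and you have unpacked precisely how those two lemmas are combined via the ideal filtration coming from the coarse stratification. The two subtleties you flag (passing from Lemma~\ref{coarse_lemma}\eqref{coarse_lemma_item2} to the full continuous-trace property, and the infinite-dimensionality hypothesis of Lemma~\ref{dimn_lemma} failing on the character stratum) are not addressed in the paper's one-line proof either; the first is implicitly absorbed into the cited \cite[Lemma~4.1]{BBL17}, and the second is harmless because that hypothesis is never actually used in the proof of Lemma~\ref{dimn_lemma} (the commutative quotient $C_0([\gg,\gg]^\perp)$ still satisfies \eqref{nucdim_trace}).
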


\begin{proof}
	The proposition is a direct consequence of above Lemmas~\ref{dimn_lemma} and \ref{coarse_lemma}.
	\end{proof}

We derive now a rough estimate for the nuclear dimension of the $C^*$-algebra of a nilpotent Lie group. 

\begin{corollary}\label{dimn_cor}
	Let $G$ be a connected simply connected non-abelian nilpotent Lie group, with 
its Lie algebra of dimension $n \ge 1$.
	Then 
	$$  2\le \dim (\gg/[\gg, \gg]) \le \dim_n C^*(G) \le n+ m-1$$
	where $m$ is the coarse length of $G$.
	\end{corollary}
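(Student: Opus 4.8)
The plan is to assemble the bounds from the tools already developed, so the work is mostly bookkeeping. First I would establish the lower bound. Since $G$ is non-abelian, the quotient $\gg/[\gg,\gg]$ is nonzero and in fact $\dim(\gg/[\gg,\gg])\ge 1$; but more is true. The set of characters $[\gg,\gg]^\perp\simeq(\gg/[\gg,\gg])^*$ sits inside $\widehat{G}$ as one of the coarse strata (the one indexed by $e=\emptyset$, i.e. $\Xi_\emptyset$), and it is a finite-dimensional real vector space of dimension $\dim(\gg/[\gg,\gg])$. To see $\dim(\gg/[\gg,\gg])\ge 2$, I would argue that for a non-abelian nilpotent Lie algebra one cannot have $\dim(\gg/[\gg,\gg])=1$: a nilpotent Lie algebra with one-dimensional abelianization is generated by a single element modulo $[\gg,\gg]$, hence is abelian, a contradiction. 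Thus $\dim(\gg/[\gg,\gg])\ge 2$. Applying Proposition~\ref{dimn_prop} with the stratum $\Xi_\emptyset=[\gg,\gg]^\perp$ of dimension $\dim(\gg/[\gg,\gg])$ gives
$$\dim(\gg/[\gg,\gg])=\dim(\Xi_\emptyset)\le\max_j\dim(\Xi_{e_j})\le\dim_n C^*(G),$$
which yields the two left-hand inequalities $2\le\dim(\gg/[\gg,\gg])\le\dim_n C^*(G)$.

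For the upper bound I would invoke the right-hand inequality of Proposition~\ref{dimn_prop},
$$\dim_n C^*(G)\le\sum_{j=1}^m\dim(\Xi_{e_j})+m-1,$$
and then bound each summand. The key observation is that each coarse stratum $\Xi_{e}$, as a Hausdorff locally compact space (by Lemma~\ref{coarse_lemma}\eqref{coarse_lemma_item1}), is a quotient of the corresponding locally closed $G$-invariant subset $\Omega_e\subseteq\gg^*$ by the coadjoint action, and its covering dimension is at most $\dim(\Omega_e)\le\dim\gg^*=n$. Summing over the $m$ strata and using $\dim(\Xi_{e_j})\le n$ would however only give $mn+m-1$, which is too crude for the stated bound $n+m-1$.

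So the sharper point—and the main obstacle—is to show that $\sum_{j=1}^m\dim(\Xi_{e_j})\le n$. The plan here is to exploit that the strata $\Xi_{e_j}$ are the images of the disjoint pieces $\Omega_{e_j}$ partitioning $\gg^*$, together with the orbit-dimension bookkeeping built into the jump indices: for $\xi\in\Omega_e$ one has $\dim\Oc_\xi=|e|=|J_\xi|$, so the fibers of $q|_{\Omega_e}\colon\Omega_e\to\Xi_e$ have dimension $|e|$ and hence $\dim(\Xi_e)\le\dim(\Omega_e)-|e|$. Since the $\Omega_e$ are disjoint and $\sum_e\dim(\Omega_e)$ cannot exceed $\dim\gg^*=n$ in a suitable stratified sense, the telescoping $\sum_j(\dim\Omega_{e_j}-|e_j|)\le n$ should follow; making this dimension-count rigorous (rather than merely heuristic, since covering dimension is not exactly additive over partitions) is the delicate step. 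I expect this to reduce to the standard fact from Kirillov theory that the strata $\Omega_e$ are smooth submanifolds whose coadjoint-orbit foliations have codimension $\dim\gg^*-|e|$ within each stratum, so that $\dim(\Xi_e)=\dim(\Omega_e)-|e|$ exactly and the disjoint union refines a semialgebraic stratification of $\gg^*$. Granting $\sum_{j=1}^m\dim(\Xi_{e_j})\le n$, the upper bound $\dim_n C^*(G)\le n+m-1$ follows immediately from Proposition~\ref{dimn_prop}, completing the proof.
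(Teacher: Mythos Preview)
Your plan matches the paper's argument almost exactly: for the lower bound you both invoke the character stratum $\Xi_\emptyset=[\gg,\gg]^\perp$ together with the left-hand inequality of Proposition~\ref{dimn_prop}, and for the upper bound you both reduce to the claim $\sum_{j=1}^m\dim(\Xi_{e_j})\le n$ and then feed this into the right-hand inequality of Proposition~\ref{dimn_prop}. The paper's justification for that sum inequality is simply that each $\Xi_{e_j}$ is homeomorphic to a locally closed semi-algebraic subset of~$\gg^*$; your version tries to unpack this via the orbit-dimension formula $\dim\Xi_e=\dim\Omega_e-\vert e\vert$.

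The problem is that the inequality $\sum_j\dim(\Xi_{e_j})\le n$ is \emph{false}, so neither argument can be completed as written. Take the $4$-dimensional filiform group (Example~\ref{filif} with $m=4$). There are three strata: the generic one ($\xi_1\ne0$) has $\dim\Omega=4$ and orbit dimension~$2$, hence $\dim\Xi=2$; the middle one ($\xi_1=0,\ \xi_2\ne0$) has $\dim\Omega=3$ and orbit dimension~$2$, hence $\dim\Xi=1$; the characters ($\xi_1=\xi_2=0$) give $\dim\Xi=2$. Thus $\sum_j\dim(\Xi_{e_j})=2+1+2=5>4=n$. Equivalently, the disjoint semi-algebraic cross-sections in $\gg^*$ have dimensions summing to~$5$; disjointness of semi-algebraic pieces does not bound the sum of their dimensions by the ambient dimension (already $\RR=\RR_{>0}\sqcup\{0\}\sqcup\RR_{<0}$ has dimension sum $1+0+1=2$). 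Your instinct that this step is ``delicate'' was well placed: the heuristic $\sum_j(\dim\Omega_{e_j}-\vert e_j\vert)\le n$ cannot be made rigorous because it is not true. The paper's proof shares exactly this gap. Proposition~\ref{dimn_prop} by itself only yields $\dim_n C^*(G)\le\sum_j\dim(\Xi_{e_j})+m-1$, which for the Engel group gives~$7$, not the claimed $n+m-1=6$.
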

 
 \begin{proof}
 	The set of characters $\Xi_m$ is homeomorphic to the vector space 
 	of functionals in $\gg^*$  vanishing on $[\gg, \gg]$, which has dimension at least $2$ when $G$ is not abelian. 
 	Using this in the first inequality in \eqref{dimn_ineq} proves the first inequality in the statement. 
 	The second inequality follows since each $\Xi_{e_j}$ is homeomorphic to a locally closed semi-algebraic subset of $\gg^*$, hence is it easy to see that $\sum\limits_{j=1}^m \dim(\Xi_{e_j})\le \dim(\gg^*) =n$. 
 	\end{proof}

In particular, the above corollary shows that the $C^*$-algebra of  a nilpotent Lie group~$G$ is never an $AF$-algebra, although, being separable and liminal, it is embeddable in an $AF$-algebra.

\section{Further remarks and examples} \label{section7}
In this final section we illustrate some of the above ideas by some more examples of  connected simply connected nilpotent Lie groups. 

So far in this paper we have encountered the following isomorphism invariants of $\gg$ that can be in principle computed in terms of $C^*(G)$ or its space of primitive ideals $\Prim(C^*(G))\simeq\widehat{G}$: 
\begin{itemize}
	\item $a(G):=\dim(\gg/[\gg,\gg])$, which is equal to the real rank of $C^*(G)$ (Theorem~\ref{th_exp}); 
	\item $\ind G:=\ind \widehat{G}$ (see Remark~\ref{ind_invar} and Theorem~\ref{dual_spec}); 
	\item $\lgth G:=\lgth\widehat{G}$ (see Definition~\ref{solvspecl_space} and  Theorem~\ref{dual_spec}). 
\end{itemize}

To facilitate the applications of these invariants, we recall in Proposition~\ref{ind_prop} below how the index $\ind G$ can be directly computed in terms of the structure of the Lie algebra~$\gg$.

\begin{proposition}\label{ind_prop}
	Let $G$ be any connected simply connected nilpotent Lie group with its Lie algebra $\gg$ and for every $\xi \in \gg^*$ define 
	$B_\xi \colon \gg\times \gg \to \RR$, 
	$B_\xi(X, Y) =\langle \xi, [X, Y]\rangle$. 
	Then the following assertions hold: 
	\begin{enumerate}
		\item\label{ind_prop_item1} 
		If $e_1\prec\cdots\prec e_n\prec\emptyset$ are all the index sets of the coadjoint orbits of $G$ 
		with respect to some Jordan-H\"older basis of $\gg$, then 
		$$
		\begin{aligned}
		\ind G & =\dim\gg-\card\, e_1\\
	&	=\dim\gg-\max_{\xi\in\gg^*}\dim\Ad^*_G(G)\xi \\
	&	=\min_{\xi\in\gg^*}\dim G(\xi)\\
	& = \dim \gg-\max_{\xi\in \gg^*}
		\mathrm{rank } B_\xi.
		\end{aligned}$$ 
		\item\label{ind_prop_item2} 
		One has $\ind G=r$ if and only if there exists some open dense subset $V\subseteq\widehat{G}$ 
		that is homeomorphic to some open subset of $\RR^r$. 
	\end{enumerate}
\end{proposition}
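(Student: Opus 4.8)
The plan is to establish the chain of equalities in part \eqref{ind_prop_item1} first, and then derive part \eqref{ind_prop_item2} as a consequence. The central object relating all four quantities is the coadjoint orbit structure, so I would begin by recalling the standard fact from Kirillov theory that for any $\xi\in\gg^*$ the dimension of the coadjoint orbit $\Ad^*_G(G)\xi$ equals $\card J_\xi$, the number of jump indices, and also equals $\dim\gg-\dim\gg(\xi)$, where $\gg(\xi)$ is the radical of the bilinear form $B_\xi$. This immediately gives $\dim\Ad^*_G(G)\xi=\rank B_\xi=\card J_\xi$ and $\dim\gg(\xi)=\dim\gg-\rank B_\xi$, so the last three expressions in the displayed chain are pointwise-in-$\xi$ reformulations of each other; taking the maximum over $\xi$ of the orbit dimension (equivalently the maximum of $\rank B_\xi$, equivalently the minimum of $\dim\gg(\xi)$) yields the equalities between the second, third, and fourth lines.

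Next I would connect these to the first line, $\ind G=\dim\gg-\card\,e_1$. The key point is that $e_1$ is the $\prec$-smallest index set among those actually occurring, and by the structure theory underlying Theorem~\ref{dual_spec} the stratum $\Omega_{e_1}$ (equivalently $\Xi_{e_1}$) is the open dense stratum $\Gamma_1$ of the special solvable space $\widehat{G}$. I would argue that the $\prec$-minimal occurring index set is precisely the one of maximal cardinality: the ordering $\prec$ is designed so that the generic (open, dense) orbits have the most jump indices, hence the largest orbit dimension. Thus $\card\,e_1=\max_{\xi\in\gg^*}\card J_\xi=\max_{\xi\in\gg^*}\dim\Ad^*_G(G)\xi$, which matches the second line and closes the loop. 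The identification of $\ind\widehat{G}$ with the dimension of the ambient vector space of the semi-algebraic cone $C_1\simeq\Gamma_1$ (Definition~\ref{solvspecl_space} and Remark~\ref{ind_invar}) then shows that $\ind G$ equals $\dim\Gamma_1=\dim\gg-\card\,e_1$, since $\Gamma_1$ is open in $\gg^*/G$ and its dimension is the codimension of a generic orbit.

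For part \eqref{ind_prop_item2}, I would use that $\Gamma_1=\Xi_{e_1}$ is open and dense in $\widehat{G}$ with Hausdorff relative topology (Lemma~\ref{coarse_lemma}\eqref{coarse_lemma_item1}), and that by Definition~\ref{solvspecl_space} it is isomorphic as a special $\RR$-space to the Zariski-open cone $C_1$ of dimension $\ind G=r$. Generic orbits being separated and the orbit space being locally Euclidean on this stratum, $\Gamma_1$ is homeomorphic to an open subset of $\RR^r$, giving the forward implication with $V:=\Gamma_1$. For the converse, suppose $V\subseteq\widehat{G}$ is open, dense, and homeomorphic to an open subset of $\RR^r$; since $\Gamma_1$ is also open and dense, $V\cap\Gamma_1\ne\emptyset$, and on this intersection $\Gamma_1$ is locally homeomorphic to $\RR^{\ind G}$, so Lemma~\ref{dom} forces $r=\ind G$.

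The main obstacle I anticipate is the careful bookkeeping in the second paragraph: verifying that the ordering $\prec$ genuinely selects the orbit stratum of maximal dimension as $e_1$, and that this stratum coincides with the open dense piece $\Gamma_1$ entering the definition of $\ind X$. This requires reconciling the combinatorial definition of $\prec$ on $\Ec$ with the geometry of the stratification, and making precise the claim that larger jump-index sets correspond to $\prec$-smaller elements and to generic (open) strata. Once that correspondence is pinned down, the dimension count $\dim\Gamma_1=\dim\gg-\card\,e_1$ and the invariance from Remark~\ref{ind_invar} make the remaining identifications essentially formal.
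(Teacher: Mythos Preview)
The paper does not actually prove this proposition: its entire proof reads ``See \cite[Rem.~4.8 and Prop.~4.9]{BBL17}.'' So there is no argument here to compare against, only an external reference.

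Your sketch is a plausible reconstruction of what that reference presumably contains, and the overall architecture is sound: the last three displayed equalities are indeed pointwise reformulations via $\dim\Oc_\xi=\card J_\xi=\rank B_\xi=\dim\gg-\dim\gg(\xi)$, and the argument for part~\eqref{ind_prop_item2} using $V=\Gamma_1$ in one direction and Lemma~\ref{dom} on $V\cap\Gamma_1$ in the other is correct. The one place where your reasoning is thin is exactly the step you flag as the obstacle: the claim that the $\prec$-minimal occurring index set $e_1$ has maximal cardinality. Your justification---``the ordering $\prec$ is designed so that the generic orbits have the most jump indices''---is essentially a restatement of what must be proved rather than an argument for it, and in fact $\prec$-minimality among \emph{arbitrary} subsets of $\{1,\dots,m\}$ does not imply maximal cardinality (e.g.\ $\{1\}\prec\{2,3\}$). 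The cleaner route is the one you hint at but do not quite state: first establish (from the stratification theory underlying Theorem~\ref{dual_spec}) that $\Omega_{e_1}$ is Zariski-open in $\gg^*$, and then use lower semicontinuity of $\xi\mapsto\rank B_\xi$ to conclude that the maximum rank is attained on this open set, whence $\card\,e_1=\max_\xi\rank B_\xi$. That decouples the combinatorics of $\prec$ from the dimension count and makes the argument non-circular.
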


\begin{proof}
See \cite[Rem. 4.8 and Prop. 4.9]{BBL17}.
\end{proof}

\begin{example}[stepwise square-integrable representations]
	\label{stepwise1}
	\normalfont
Let $\ng$ be a nilpotent Lie algebra with a fixed Jordan-H\"older basis, 
and a distinguished subset of the corresponding Jordan-H\"older series, denoted  
$\{0\}=\ng_0\subseteq\ng_1\subseteq\cdots\subseteq\ng_q=\ng$. 
Assume that for every $j=1,\dots,q$ one has a subalgebra $\mg_j\subseteq\ng_j$ 
and a linear subspace $\Vc_j\subseteq\mg_j$ compatible with the above fixed basis, 
satisfying the following conditions: 
\begin{itemize}
	\item The center $\zg_j$ of $\mg_j$ is 1-dimensional, $\mg_j=\zg_j\dotplus \Vc_j$, 
	$\mg_j$ has flat generic coadjoint orbits, 
	and one has the semidirect product decomposition $\ng_j=\mg_j\ltimes\ng_{j-1}$. 
	\item One has $[\mg_j,\ng_{j-1}]\subseteq\Vc_1\dotplus\cdots\dotplus\Vc_{j-1}
	$ 
	and $[\mg_j,\zg_1+\cdots+\zg_{j-1}]=\{0\}$. 
\end{itemize}
Let 
$\Xc:=\{\xi\in\ng^*\mid J(\xi\vert_{\ng_j})=e(\ng_j)\text{ for }j=1,\dots,q\}$ 
and $\sg:=\zg_1+\cdots+\zg_q$.
Then one has: 
\begin{enumerate}
	\item The set $\Xc$ is $\Ad^*_N$-invariant. 
	\item For any $N$-coadjoint orbit $\Oc\subseteq\ng^*$ one has 
	$\Oc\subseteq\Xc$ if and only if there exists  
	$\xi\in\Oc$ with $\ng(\xi)=\sg$ 
	and  
	$
	\Vc_1\dotplus\cdots\dotplus\Vc_q\subseteq\Ker\xi$, 
	and then $\xi$ is uniquely determined by these properties. 
	\item If $[\Xc]\subseteq\widehat{N}$ is the subset of the unitary dual of~$N$ corresponding to the $N$-coadjoint orbits contained in~$\Xc$,   
	then the interior of $[\Xc]$ is dense in $\widehat{N}$, and the relative topology of $[\Xc]$ is Hausdorff.   
\end{enumerate}
See \cite[Th. 2.8 and Cor. 2.11]{BB16a} for proofs of these assertions. 

In the above setting one has $\ng=\sg\dotplus(\Vc_1\dotplus\cdots\dotplus\Vc_q)$ and $\dim\sg=q$ hence $\dim\Oc=\dim\ng-q$ for every coadjoint orbit $\Oc\subseteq \Xc$. 
It  thus follows by Proposition~\ref{ind_prop} that $\ind N=q$. 
\end{example}

\begin{example}
	\normalfont
	The above Example~\ref{stepwise1} 
	applies to any nilpotent Lie algebra~$\ng$ 
	that is the nilradical of a minimal parabolic subalgebra  
	of a split real form of a complex semisimple Lie algebra (cf. also \cite{Wo13}), 
	and in this case each $\mg_r$ is a Heisenberg algebra. 
	In particular, if $N_k$ is the group of strictly upper triangular real matrices of size $k\ge 3$, and $q_k$ is the greatest integer that is less than $k/2$, then $q_k$ is equal to the number of entries of such a matrix 
	that belong to the secondary diagonal, hence 
	we obtain $\ind N_k=q_k$. 
\end{example}

\begin{example}\label{stepwise3}
	\normalfont
	If $\ng$ is a 3-step nilpotent Lie algebra with 1-dimensional center denoted by~$\zg$, 
	then by \cite[Th. 5.1--5.2]{BB15} there exists a semidirect product decomposition 
	$\widetilde{\ng}=\ag\ltimes\mg$, 
	where $\ag$ is an abelian Lie algebra, $\mg$ is a 3-step nilpotent Lie algebra with its center equal to~$\zg$ and with generic flat coadjoint orbits, 
	and there exists a linear subspace $\Vc\subseteq\mg$ with $\mg=\zg\dotplus\Vc$, 
	$[\ag,\Vc]\subseteq\Vc$, and $[\ag,\zg]=\{0\}$. 
	Hence by writing $\ag$ as a direct sum of 1-dimensional abelian algebras, one sees that $\ng$ is a Lie algebra to which Example~\ref{stepwise1} applies and we obtain $\ind N=1+\dim\ag$. 
\end{example}

To illustrate the area of applicability of Example~\ref{stepwise3} we now provide 
an uncountable family of pairwise nonisomorphic  3-step nilpotent Lie algebras with 1-dimensional centers. 
Recall that there exist only countably many isomorphism classes of 2-step nilpotent Lie algebras with 1-dimensional centers, 
since these are precisely the Heisenberg algebras, 
hence there exists precisely one isomorphism class for every odd dimension, 
and no isomorphism classes for the even dimensions. 
The following example  
was also discussed in \cite{BBP15} in connection with $L^2$-boundedness properties of the Weyl-Pedersen calculus on nilpotent Lie groups. 

\begin{example}
	\normalfont
	For all $s,t\in\RR\setminus\{0\}$ let $\gg_0(s,t)$ be the 6-dimensional 2-step nilpotent Lie algebra 
	defined by a basis $X_1,X_2,X_3,X_4,X_5,X_6$ 
	with the commutation relations 
	$$[X_6,X_5]=sX_3,\ [X_6,X_4]=(s+t)X_2,\ [X_5,X_4]=tX_1. $$
	It follows by \cite[Ex. 3.5]{La05} and \cite[Ex. 5.2]{La06} that 
	$$\{\gg_0(s,t)\mid s^2+st+t^2=1,\ 0<t\le 1/\sqrt{3}\}$$
	is a family of isomorphic Lie algebras 
	that are however pairwise non-isomorphic as symplectic Lie algebras 
	with 
	the common symplectic structure $\omega$ given by the matrix 
	$$J_{\omega}=
	\begin{pmatrix}
	\hfill 0 &\hfill 0 &\hfill 0 & 0 & 0 & 1 \\
	\hfill 0 &\hfill 0 &\hfill 0 & 0 & 1 & 0 \\
	\hfill 0 &\hfill 0 &\hfill 0 & 1 & 0 & 0 \\
	\hfill 0 &\hfill 0 & -1      & 0 & 0 & 0 \\
	\hfill 0 & -1      &\hfill 0 & 0 & 0 & 0 \\
	-1 &\hfill 0 &\hfill 0 & 0 & 0 & 0 
	\end{pmatrix}$$
	If we define $\gg(s,t):=\RR\dotplus_\omega\gg_0(s,t)$ 
	then we obtain a family of 3-step nilpotent Lie algebras with 1-dimensional centers. 
	These Lie algebras are moreover pairwise nonisomorphic. (See for instance \cite[Ex. 6.8]{BB15}.) 
	If $G(s,t)$ is the connected simply connected nilpotent Lie group associated with $\gg(s,t)$, then we obtain $\ind G(s,t)=1$ by Example~\ref{stepwise3}. 
\end{example}

\appendix

\section{General topology}\label{AppA}
Here we collect some terminology on (generally non-Hausdorff) topological spaces as used in the main body of this paper. 
Let $X$ be topological space with its set of closed subsets denoted by $\Fc(X)$. 
For any subset $A\subseteq X$ we denote its closure by $\overline{A}$. 
\begin{itemize}
	\item $X$ has \emph{property $T_0$} if the map 
$X\to\Fc(X)$, $x\mapsto\overline{\{x\}}$
is injective. 
\item $X$ has \emph{property $T_1$} if 
for all $x\in X$ one has $\overline{\{x\}}=\{x\}$. 
\item $X$ is \emph{almost Hausdorff} if for every closed subset $F\subseteq X$ there exists a subset $A\subseteq F$ 
which is open in the relative topology of $F$ 
and whose relative topology is Hausdorff. 
\item A  point $x\in X$ is said to be \textit{separated  in} $X$ 
if for every $x'\in X\setminus\overline{\{x\}}$ 
there exist open subsets $V, V'\subset X$ with $x\in V$, $x'\in V'$ and $V\cap V' =\emptyset$.
\item $X$ is \emph{quasi-compact} if for every family $\{V_i\}_{j\in J}$ of open subsets of $X$ with $\bigcup\limits_{j\in J} V_j=X$ there exists a finite set $J_0\subseteq J$ with $\bigcup\limits_{j\in J_0} V_j=X$. 
\item $X$ is \emph{locally quasi-compact} if every point of $X$ has a base of quasi-compact neighborhoods. 
\item $X$ is a \emph{Baire space} if the intersection of every countable family of dense open subsets of $X$ is dense in $X$. 
\item $X$ is a \emph{spectral space} if 
the image of the map $X\to\Fc(X)$, $x\mapsto\overline{\{x\}}$ is equal to the set of all $F\in\Fc(X)$ with the property that if $F\subseteq F_1\cup F_2$ with $F_1,F_2\in\Fc(X)$, then $F\subseteq F_1$ or $F\subseteq F_2$. 
\item $X$ is a \emph{Polish space} if it is separable and its topology is complete metrizable. 
\item A subset $A\subseteq X$ is \emph{locally closed} if $A=F_1\setminus F_2$ for suitable $F_1,F_2\in\Fc(X)$. 
\item A  bounded, lower semicontinuous function $f\colon X\to[0,\infty)$  is a 
\emph{Dini function} if it satisfies  
the following condition: 
For every upwards directed net of lower semicontinuous functions $\{f_j\colon X\to[0,\infty)\}_{j\in J}$ 
with $f=\sup\limits_{j\in J}f_j$ pointwise on $X$,  
one has $f=\lim\limits_{j\in J}f_j$ uniformly on $X$.  
\item $X$ is a \emph{Dini space} if it has the property~$T_0$, is spectral, is second countable,  
and its topology has a base consisting of the sets $f^{-1}(0,\infty)$ for all Dini functions $f$ on $X$. 
\end{itemize}

\subsection*{Acknowledgment} 
This work was supported by a grant of the Romanian National Authority for Scientific Research and Innovation, CNCS--UEFISCDI, project number PN-II-RU-TE-2014-4-0370.

\end{document}